\newif{\ifDierk}\Dierkfalse
\DeclareSymbolFont{bchoperators}{T1}{bch}{m}{n}
\renewcommand{\operator@font}{\mathgroup\symbchoperators}
\titleformat{\section}{\normalfont\bfseries\filcenter}{\thesection}{1em}{}
\titleformat{\subsection}{\normalfont\bfseries}{\thesubsection}{1em}{}
\titleformat{\subsubsection}{\normalfont\bfseries}{\thesubsubsection}{1em}{}
\newcommand{\C}{{\mathbb C}}
\newcommand{\BP}{{\mathbb P}}
\newcommand{\Q}{{\mathbb Q}}
\newcommand{\R}{{\mathbb R}}
\newcommand{\Sphere}{{\mathbb S}}
\newcommand{\Z}{{\mathbb Z}}
\newcommand{\CA}{{\mathcal A}}
\newcommand{\CP}{{\mathcal P}}
\newcommand{\To}{\longrightarrow}
\newcommand{\Aff}{\operatorname{Aff}}
\newcommand{\eps}{\varepsilon}
\renewcommand{\Re}{\operatorname{Re}}
\renewcommand{\Im}{\operatorname{Im}}
\newcommand{\uz}{\underline{z}}
\newcommand{\sm}{\setminus}
\newcommand{\TotalDiff}[2]{\mathbf{D}(#1)_{|#2}}
\newtheorem{Theorem}{Theorem}[section]
\newtheorem{Maintheorem}{Theorem}
\newtheorem{Lemma}[Theorem]{Lemma}
\newtheorem{Proposition}[Theorem]{Proposition}
\theoremstyle{definition}
\newtheorem{Remark}[Theorem]{Remark}
\numberwithin{equation}{section}
\definecolor{darkgreen}{rgb}{0,0.5,0}
\begin{document}

\title{The Weierstrass root finder is not generally convergent}
\author{Bernhard Reinke}
\address{Aix-Marseille Universit\'e,
         Institut de Math\'ematiques de Marseille,
         163 Avenue de Luminy Case 901,
         13009 Marseille, France.}
\email{bernhard.reinke@univ-amu.fr}

\author{Dierk Schleicher}
\address{Aix-Marseille Universit\'e,
         Institut de Math\'ematiques de Marseille,
         163 Avenue de Luminy Case 901,
         13009 Marseille, France.}
\email{dierk.schleicher@gmx.de}

\author{Michael Stoll}
\address{Mathematisches Institut,
         Universit\"at Bayreuth,
         95440 Bayreuth, Germany.}
\email{Michael.Stoll@uni-bayreuth.de}

\date{April 9, 2020}

\keywords{Weierstrass, root-finding methods, general convergence, attracting cycle, escaping points}
\subjclass[2010]{65H04, 37F80, 37N30, 68W30} 

\begin{abstract}
  Finding roots of univariate polynomials is one of the fundamental tasks
  of numerics, and there is still a wide gap between root finders that are
  well understood in theory and those that perform well in practice.
  We investigate the root finding method of Weierstrass, a root finder
  that tries to approximate all roots of a given polynomial in parallel
  (in the Jacobi version, i.e., with parallel updates). This method has a
  good reputation for finding all roots in practice except in obvious cases
  of symmetry, but very little is known about its global dynamics and
  convergence properties.

  We show that the Weierstrass method, like the well known Newton method,
  is not generally convergent: there are
  open sets of polynomials~$p$ of every degree $d \ge 3$ such that
  the dynamics of the Weierstrass method applied to~$p$ exhibits attracting
  periodic orbits.  Specifically, all polynomials
  sufficiently close to $Z^3 + Z + 180$ have attracting cycles of period~$4$.
  Here, period $4$ is minimal: we show that for cubic polynomials, there are no
  periodic orbits of length $2$ or~$3$ that attract open sets of starting
  points.

  We also establish another convergence problem for the Weierstrass method:
  for almost every polynomial of degree $d\ge 3$ there are orbits
  that are defined for all iterates but converge
  to~$\infty$; this is a problem that does not occur for Newton's method.

  Our results are obtained by first interpreting the original problem
  coming from numerical mathematics in terms of higher-dimensional
  complex dynamics, then phrasing the question in algebraic terms in such a
  way that we could finally answer it by applying methods from computer
  algebra.
\end{abstract}

\maketitle



\section{Introduction} 

Finding roots of polynomials is one of the  fundamental tasks in mathematics,
highly relevant in theory for many fields as well as numerous practical
applications. Since the work of Ruffini--Abel, it is clear that in general
the roots cannot be found by finite radical extensions, so numerical
approximation methods are required. One may find it surprising that, despite
age and relevance of this problem, no clear algorithm is known that has a
well-developed theory and works well in practice.

There are ``algorithms'' (in the sense of heuristics) that seem to work
in practice fast and reliably, among them the Weierstrass and Ehrlich--Aberth
methods, which are both iterations in as many variables as the number of
roots to be found, and which are supposed to converge to a vector of roots
under iteration. They are known to converge quadratically resp.\ cubically
near the roots, but have essentially no known global theory. Then there are
algorithms such as Pan's that have excellent theoretical complexity (optimal
up to log-factors), but they cannot be used in practice because of their
lack of stability.

An interesting method is Newton's, which may well be the best-known method;
it approximates one root at a time. This is a simple method that is stable
and converges quadratically near the roots, so it is often used to polish
approximate roots. However, it is an iterated rational map, so it is
``chaotic'' on its Julia
set, and its global dynamics is hard to describe. In particular, it is
well known to be not generally convergent: there are open sets of polynomials
and open sets of starting points on which the Newton dynamics does not
converge to any root, but rather to an attracting periodic orbit
(``an attracting cycle'') of period $2$ or higher. Its use has thus often
been discouraged. However, in recent years quite some theory has been
developed about its global dynamics and its expected (rather efficient) speed
of convergence. At the same time, it has been used in practice successfully
to find all roots of polynomials of degree exceeding~$10^9$ in remarkable
speed. Some of these results are described in Section~\ref{Sec:Newton}.
Therefore, Newton's method stands out as one that at the same time has good
theory and performs well in practice.

The focus of our work is on the Weierstrass iteration method, also known as
the Durand--Kerner-method. For this method, we are not aware of any global
theory of its dynamics, but it is well known in practice to find all roots
of a complex polynomial in all cases, except in the presence of obvious
symmetries: for instance, when the polynomial is real but some of its roots
are not, then any purely real vector of starting points cannot converge to
the roots, since the method respects complex conjugation.

Our first result says that this observation does not hold in general.

\begin{Maintheorem}[The Weierstrass method is not generally convergent] \strut
  \label{MT1}
  \begin{enumerate}[\upshape(1)]
   \item \label{Item:MT1-NotGenConv}
          There is an open set of polynomials~$p$ of every degree
          $d \ge 3$ such that the (partially defined) Weierstrass iteration
          $W_p \colon \C^d \to \C^d$ associated to~$p$  has attracting cycles of period $4$.
          In particular, Weierstrass's method is not generally
          convergent for polynomials of degree at least~$3$.
    \item \label{Item:MT1-CubicOkay}
          Period $4$ is minimal with this property: for every cubic
          polynomial~$p$ the associated Weierstrass iteration
          $W_p \colon \C^3 \to \C^3$ associated to~$p$ cannot have
          an attracting cycle of period $2$ or~$3$.
  \end{enumerate}
\end{Maintheorem}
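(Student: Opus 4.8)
The plan is to treat $W_p$ as a rational self-map of $\C^d$ depending algebraically on the coefficients of~$p$, to exploit its built-in symmetries to cut down the number of parameters, and then to reduce each part of the theorem to an explicit question that computer algebra can settle.

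\emph{Preliminaries.} First I would record the structural features of $W_p$ that make the problem effectively low-dimensional: (i) $W_p$ commutes with the diagonal action of the affine group of $\C$ on $\C^d$ and with the $S_d$-action permuting coordinates, so the family of cubics may be put in depressed form $p = Z^3 + bZ + c$ and one further scaling parameter normalized, leaving essentially a one-parameter family together with the degenerate loci $b = 0$ (the symmetric case $Z^3 + c$) and $c = 0$; (ii) a Lagrange-interpolation computation shows that $\sum_i W_p(z)_i$ equals the negative of the subleading coefficient of~$p$, so the image of $W_p$ lies in a fixed affine hyperplane $H = \{\sum_i z_i = \mathrm{const}\}$; consequently every periodic point lies in~$H$, the derivative $DW_p$ always has $0$ as an eigenvalue, the subspace $\{\sum_i v_i = 0\}$ is $DW_p$-invariant, and for cubics the relevant multiplier of a cycle is a $2\times2$ matrix on that subspace; (iii) the only fixed points are the tuples of roots of~$p$, and at a tuple of distinct roots $DW_p = 0$ (the classical quadratic convergence). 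I would also derive the explicit formula for $DW_p$, whose off-diagonal entries are $W_i/(z_k - z_i)$, since the trace and determinant of the $2\times2$ multiplier block enter both parts.

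\emph{Part (1).} For $p_0 = Z^3 + Z + 180$ I would produce, by a numerical search followed by an exact algebraic description (a point of the period-$4$ variety cut out by $W_{p_0}^{\circ4}(z) = z$, $W_{p_0}(z)\neq z$, coordinates distinct), one genuine period-$4$ orbit, then compute the $3\times3$ Jacobian of $W_{p_0}^{\circ4}$ at a point of the orbit and verify \emph{rigorously} --- via interval/ball arithmetic on a sufficiently accurate approximation, or via resultant bounds on its characteristic polynomial --- that its spectral radius is strictly below~$1$. Since $(p,z)\mapsto W_p(z)$ is polynomial in the coefficients of~$p$, the implicit function theorem makes the period-$4$ point persist for all $p$ near~$p_0$ with multipliers varying continuously, so the set of such~$p$ is open; this settles $d = 3$. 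For $d > 3$ I would pass to $q = p_0\cdot h$ with $h$ monic of degree $d-3$ whose roots are placed far away, and start the iteration with the first three coordinates near the period-$4$ orbit and the remaining ones near the roots of~$h$: the far coordinates contract superfast onto the roots of~$h$ while the first three shadow the dynamics of $W_{p_0}$, so a normally-hyperbolic/fixed-point argument yields an attracting period-$4$ orbit for $W_q$, again an open condition on~$q$.

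\emph{Part (2).} For the normalized cubic $p = Z^3 + bZ + c$ and $n\in\{2,3\}$ I would set up the algebraic system describing a genuine period-$n$ orbit of $W_p$ (all orbit points in~$H$, coordinates distinct within each point, the orbit not a fixed point, all denominators of $W_p$ nonvanishing along the orbit), together with the trace~$\tau$ and determinant~$\delta$ of the $2\times2$ multiplier block of $W_p^{\circ n}$ along the orbit, which are rational functions on this system. Attractivity is equivalent to a Schur--Cohn condition on $(\tau,\delta)$, e.g.\ $|\delta| < 1$ and $|\tau - \bar\tau\delta| < 1 - |\delta|^2$. I would eliminate the orbit variables (Gröbner bases, rational univariate representation) to get polynomial constraints in $(b,c,\tau,\delta)$ alone, pass to real and imaginary parts, and then either exhibit a Positivstellensatz-type certificate that $|\delta|\ge 1$ on the entire period-$n$ locus --- which already rules out attractivity, since an attracting cycle would force $|\delta| < 1$ --- or, should $|\delta|$ be able to dip below~$1$, run a cylindrical algebraic decomposition verifying that the Schur--Cohn inequalities are nowhere simultaneously satisfied. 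The degenerate strata $b = 0$ and $c = 0$ must be treated separately.

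\emph{Main obstacle.} The conceptual steps are routine; the difficulty is computational and lies in making it rigorous. For part~(1) the delicate point is certifying the eigenvalue bound for $W_{p_0}^{\circ4}$ (and its persistence) with a genuine proof rather than a floating-point computation, since interval arithmetic on a fourfold composition of rational maps in three variables is heavy, and one must also certify that the orbit really has period~$4$ rather than being a degenerate or lower-period configuration. For part~(2) the obstacle is the elimination itself: the period-$3$ locus is a high-degree variety in many variables, the multiplier expressions are large, and extracting a human-verifiable certificate --- or a terminating CAD --- over~$\R$ after taking real and imaginary parts is the crux of the whole argument, and is precisely the point at which the problem is handed off to computer algebra.
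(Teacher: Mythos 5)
Your overall architecture (symmetry reduction to $p=Z^3+Z+t$, the invariant hyperplane, algebraic periodic-point varieties, multiplier analysis handed to computer algebra) is the paper's. For part~(1) your route is a legitimate alternative: the paper does not certify a single numerically found orbit, but instead restricts to the subfamily of $4$-cycles of \emph{transposition type} (those on which $W_p^{\circ 2}$ acts by transposing two coordinates), which cuts the period-$4$ system down to a curve of degree~$24$ over the $t$-line; it then computes the image of the multiplier map as an explicit plane curve in the coefficients $(c_0,c_1)$ of the characteristic polynomial (birational to an elliptic curve) and reads off, exactly, parameters for which both eigenvalues lie in the open unit disk (Proposition~\ref{Prop:4t}). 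This avoids floating point entirely, whereas the rigorous interval-arithmetic certification you correctly flag as the delicate step is still owed in your version. Your degree-lifting is Lemma~\ref{L:lift}, but note it is cleaner than a shadowing argument: with the extra coordinates placed \emph{exactly} at the roots of~$h$ the point is exactly periodic, the Jacobian of the return map is block triangular, and the extra eigenvalues are products of factors $1-p(\alpha)/\prod_j(\alpha-z_j)\to 0$ as the appended roots go to infinity.

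The genuine gap is the endgame of part~(2). Stopping the elimination at $(b,c,\tau,\delta)$ and then running CAD on real and imaginary parts (eight real variables, with the period-$3$ relation of enormous degree) is not a workable plan, and a blanket certificate $|\delta|\ge 1$ is not available in general. What actually closes the argument in the paper is: (i) eliminating \emph{all} variables, including the parameter~$t$, to land on a single plane curve in $(c_0,c_1)$ --- for $2$-cycles this is the line $c_0+2c_1+6=0$, immediately incompatible with $|c_0|\le 1$, $|c_1|\le 2$; for rotation-type $3$-cycles it is $c_1=3$; for general $3$-cycles it is a rational curve of degree~$12$ (Propositions \ref{Prop:2c}, \ref{Prop:3r}, \ref{Prop:3c}); (ii) for the last case, the decisive device is Lemma~\ref{L:test}: rewrite the curve as a symmetric relation $P(\lambda,\mu)=0$ between the two eigenvalues, evaluate $P$ at $N^2$ pairs of roots of unity, bound its modulus of continuity on the torus by the coefficient sum of its partial derivatives, conclude that $P$ maps the torus into a plane slit along a ray from the origin, and deduce via the argument principle that $P$ has no zero in the closed bidisk. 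Your proposal has no concrete substitute for this step. Two further points: the split into rotation-type versus general cycles (rotation-type cycles descend to fixed points of the quotient map $\widetilde{W}_p$ on the symmetric-function plane) is what keeps the general-cycle elimination feasible at all; and to exclude attraction you must rule out both eigenvalues lying in the \emph{closed} unit disk, since Lemma~\ref{Lem:AttractingFixedPoint} only forbids attraction when some eigenvalue has modulus strictly greater than~$1$ --- so the strict Schur--Cohn inequalities you propose to refute are the wrong target.
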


This theorem answers in the affirmative a question asked by Steve Smale:
he expected the existence of attracting cycles in the 1990's, if not earlier,
in analogy to the Newton dynamics (Victor Pan, personal communication).

Following  McMullen~\cite{McMullenRootFinding},
we say that a root-finding method is \emph{generally convergent} if, for an
open dense set of polynomials of fixed degree, there is an open dense
set of starting points in~$\C$ that converge to one of the roots. To our
knowledge, the only way to establish failure of general convergence is to
find a polynomial~$p$ that, under the given iteration method, has an
attracting periodic orbit (an ``attracting cycle'') of period $n \ge 2$.
This attracting cycle must attract a neighborhood of the cycle, and it would
persist under small perturbations of~$p$, so convergence to a root fails on
an open set of starting points for an open set of polynomials. Therefore,
our theorem establishes that the Weierstrass method is not generally
convergent for polynomials of degrees $3$ or higher. (Other ways of failure
of general convergence are of course conceivable but have apparently never
been observed).

It is well known that the Weierstrass method has another problem: some
orbits are not defined forever. The Weierstrass method $W_p \colon \C^d \to \C^d$
is not defined whenever two coordinates in~$\C^d$ coincide; this problem
may occur even after any number of iteration steps from a starting vector with
distinct entries.

Our second main result establishes the existence of
a very different kind of problem for the
Weierstrass method that apparently was not known: there are  orbits in~$\C^d$
for which the iteration is always defined that converge to~$\infty$ (in the
sense that the orbit leaves every compact subset of~$\C^d$). This problem
exists (at least) for every polynomial of degree $d \ge 3$ that has
only simple roots. In fact, we prove a slightly stronger result;
see Section~\ref{Sec:escape}.

\begin{Maintheorem}[The Weierstrass method has escaping points]
  \label{MT2}
  For every polynomial~$p$ of degree $d \ge 3$ with only simple roots,
  there are vectors in~$\C^d$ whose orbits under~$W_p$ tend to infinity.
  The set of escaping points contains a holomorphic curve.
\end{Maintheorem}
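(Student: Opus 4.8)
\emph{Proof proposal for Theorem~\ref{MT2}.}
The plan is to first reduce the statement to the case $d=3$, then to locate a suitable hyperbolic saddle ``at infinity'' and take its stable manifold. Write $p(Z)=\prod_{k=1}^{d}(Z-\zeta_k)$ with the $\zeta_k$ distinct. I claim the affine subspace $A=\{z\in\C^d:z_1=\zeta_1,\dots,z_{d-3}=\zeta_{d-3}\}\cong\C^3$ is forward invariant under $W_p$ and that $W_p|_A$ is the Weierstrass map $W_q$ of the cubic $q(Z)=(Z-\zeta_{d-2})(Z-\zeta_{d-1})(Z-\zeta_d)$. Indeed, for $i\le d-3$ one has $p(\zeta_i)=0$, so the $i$-th coordinate of $W_p$ is unchanged; and for each of the three remaining coordinates the factors $(z_i-\zeta_k)$ with $k\le d-3$ appearing in $p(z_i)$ cancel exactly against the matching factors in $\prod_{j\ne i}(z_i-z_j)$, leaving precisely the formula for $W_q$. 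Since $q$ has simple roots, it therefore suffices to prove the theorem for $d=3$: a holomorphic curve of escaping points for $W_q$ in $\C^3$ lies inside $A\subset\C^d$ and escapes for $W_p$ as well.

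Fix a monic cubic $q$ with simple roots. The identity $\sum_i W_q(z)_i=-\,(\text{coefficient of }Z^2\text{ in }q)$ shows that after one step the orbit lies in the plane $P=\{z_1+z_2+z_3=s\}$ with $s=\zeta_1+\zeta_2+\zeta_3$, so $W_q$ restricts to a rational self-map of $P\cong\C^2$. I would compactify $P$ to $\BP^2$, with line at infinity $\BP^1_\infty=\BP(H_0)$, $H_0=\{\alpha_1+\alpha_2+\alpha_3=0\}$, and extend (resolving the bad behaviour at infinity by finitely many blow-ups) to $\widehat W_q:\BP^2\dashrightarrow\BP^2$. Because $W_q$ is, up to lower order, homogeneous of degree one near infinity — concretely $W_q(R\alpha)=R\,W_{Z^3}(\alpha)+O(1)$ as $R\to\infty$, uniformly for $\alpha$ away from the diagonals, where $W_{Z^3}$ is the Weierstrass map of $Z^3$ — the restriction of $\widehat W_q$ to $\BP^1_\infty$ is the \emph{$q$-independent} rational map $g:=\overline{W_{Z^3}}|_{\BP(H_0)}$. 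The quantity governing whether an orbit near infinity drifts out or in is the ``radial gain'' $\rho(\alpha)=|W_{Z^3}(\alpha)|$ for $|\alpha|=1$: if the orbit in $\BP^1_\infty$ follows $\alpha_0,\alpha_1,\dots$ under $g$ while staying at distance $\asymp R_n$ from $\BP^1_\infty$, then $R_{n+1}\approx\rho(\alpha_n)R_n$, so the orbit escapes precisely when $\prod_n\rho(\alpha_n)\to\infty$. A short computation identifies the fixed points of $g$ as the two ``symmetric'' configurations $[1:\omega:\omega^2]$, $\omega^3=1\ne\omega$ (radial multiplier $2/3$), and the three ``degenerate'' ones $[1:-1:0]$ and permutations (radial multiplier $1/2$). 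All have $\rho<1$, so none of them can be used directly: at infinity they are either fully repelling or saddles whose stable manifolds lie inside $\BP^1_\infty$. However, $\rho$ has a simple pole at each of the three ``diagonal'' directions $[1:1:-2]$, $[1:-2:1]$, $[-2:1:1]$ (two entries coinciding); these lie on the closures of the diagonals $\{z_i=z_j\}$ that $\widehat W_q$ contracts to the degenerate fixed points, and since those are repelling for $g$, the diagonal directions belong to the Julia set of $g$.

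The plan for the curve is then: produce a periodic cycle $\alpha_0\to\dots\to\alpha_{p-1}\to\alpha_0$ of $g$, of period $p\ge2$, that is repelling for $g$ and has total radial gain $\prod_{i=0}^{p-1}\rho(\alpha_i)>1$; such a cycle must pass close to one of the diagonal directions, where $\rho$ is large. Along such a cycle, after resolving indeterminacy, $\widehat W_q$ has a hyperbolic periodic point that is a \emph{saddle}: the multiplier tangent to $\BP^1_\infty$ is $(g^p)'$, of modulus $>1$ (the cycle is in the Julia set of $g$), while the multiplier transverse to $\BP^1_\infty$ has modulus $1/\prod_i\rho(\alpha_i)<1$. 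By the stable manifold theorem the local stable manifold of this saddle is a one-dimensional holomorphic submanifold transverse to $\BP^1_\infty$, hence meeting $P\cong\C^2$ (so $\C^3$) in a holomorphic curve $\Gamma$. Every point of $\Gamma$ has $W_q$-orbit converging to the cycle at infinity, i.e.\ leaving every compact subset of $\C^3$; transporting $\Gamma$ into the subspace $A\subset\C^d$ from the first step yields the required holomorphic curve of escaping points for $W_p$.

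\emph{Main obstacle.} The reduction, the leading-order analysis at infinity, and the stable-manifold step are fairly routine. The crux is the existence of a period-$\ge2$ cycle of the explicit map $g=\overline{W_{Z^3}}$ with $\prod_i\rho(\alpha_i)>1$: since every fixed point is radially contracting, one genuinely needs a longer cycle spending enough of its time near a diagonal direction, and the naive argument ``diagonal directions lie in the Julia set, so nearby periodic points have huge $\rho$'' is not conclusive, because such cycles also have large period, and the remaining factors $\rho(\alpha_i)$ (which can be as small as $\tfrac12$) may compensate. One must therefore exhibit a concrete cycle and check that $\prod_i\rho(\alpha_i)>1$, that it avoids all loci where $W_q$ or its iterates degenerate, and that both multipliers are strictly off the unit circle on the required sides. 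As $g$ is completely explicit and independent of $q$, this is exactly the kind of finite algebraic verification that computer algebra makes feasible; I would also expect the resolution of the contracted diagonals of $\widehat W_q$ to require a little care.
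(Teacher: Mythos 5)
Your strategy is the paper's: reduce to $d=3$ by parking the remaining $d-3$ coordinates at distinct roots of~$p$ (Lemma~\ref{L:Weierstrass-3}), restrict to the invariant plane $z_1+z_2+z_3=\mathrm{const}$, compactify to~$\BP^2$, note that the line at infinity is forward invariant with induced dynamics the $p$-independent map~$\varphi$ coming from~$W_{Z^3}$, find a periodic point at infinity that is a saddle (repelling tangentially to the line at infinity, attracting transversally), and take its stable manifold, which is a holomorphic curve by Hubbard's theorem. All of the framing steps — the reduction, the leading-order analysis at infinity, the dichotomy of radial gain versus tangential multiplier, and the stable-manifold conclusion — match the paper's proof of Theorem~\ref{T:escZ3} and the deduction of Theorem~\ref{MT2} from it.

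The gap is exactly the one you flag: without producing the cycle, the argument does not close, and ``a cycle near the diagonal directions should have large radial gain'' is indeed not a proof. The resolution is much cheaper than your discussion suggests: no long orbit shadowing a diagonal direction is needed, and the compensation problem you worry about never arises, because an explicit \emph{2-cycle} already works. The paper takes $q_0=(1:\alpha:0)$ with $\alpha=-\sqrt{(5+\sqrt{21})/2}$ (equivalently $\alpha^4-5\alpha^2+1=0$); this point is $2$-periodic for the extension of~$W_p$ to~$\BP^2$ for \emph{every} cubic~$p$, and the multiplier matrix of the second iterate at~$q_0$ has eigenvalues $43+12\sqrt{7}>1$ (eigenvector tangent to the line at infinity) and $2-\tfrac12\sqrt{7}\in(0,1)$ (eigenvector pointing into the affine plane). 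This settles, by direct computation, the strict inequalities you note must be checked; one also verifies that the cycle avoids $(1:0:0)$, $(0:1:0)$, $(1:-1:0)$ and the indeterminacy locus of the extension, so no blow-up is required and the orbit of any point on the stable manifold tends to infinity in every coordinate. Two minor remarks: the cubic step needs no simplicity hypothesis (Theorem~\ref{T:escZ3} holds for every cubic), so the theorem in fact extends slightly beyond polynomials with simple roots; and your classification of the fixed points of~$\varphi$ at infinity (the symmetric ones with radial gain $2/3$, the degenerate ones with radial gain $1/2$, none usable) is correct and agrees with Lemma~\ref{L:dynphi}.
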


It might be interesting to observe that this problem does not exist for
Newton's method: here, $\infty$ is a ``repelling fixed point'', and all
points sufficiently close to~$\infty$ will always iterate closer toward
the roots. For degenerate  polynomials like $Z \mapsto Z^d$, all Newton
orbits converge to the single root, while Weierstrass has escaping orbits
even for $Z \mapsto Z^3$ (see Remark~\ref{Rem:CubicEscaping}).

We cannot resist stating an analogy to the dynamics of transcendental
entire functions in one complex variable: all such functions have escaping
points (points that converge to~$\infty$ under iteration);
see~\cite{EremenkoEscaping}.
Already Fatou observed that in many cases, the set of escaping points
contains curves to~$\infty$; in the 1980's Eremenko raised the
conjecture that all escaping points were on such curves to~$\infty$.
This conjecture was established for many classes of entire functions,
and disproved in general, in~\cite{RRRS}.
It  is plausible that the set of escaping points for~$W_p$ has the
following property: every escaping point can be joined with~$\infty$
by a curve consisting of escaping points.

There is a substantial body of literature on root finding in general,
and on background on our methods in particular. We just mention the survey
article by Pan~\cite{PanSolvingPolynomials} about various known methods and
their properties, as well as the references therein and the surveys by
McNamee~\cites{McNamee1,McNamee2}.


\subsection*{Structure of this paper}

In Section~\ref{Sec:Newton}, we describe some background on Newton's method
and its properties, in order to describe analogies and to build up some
intuition. Basic properties of the Weierstrass method are then described
in Section~\ref{Sec:Weierstrass}. In particular, we discuss escaping points
for the Weierstrass method, starting with the simple polynomial $Z \mapsto Z^3$,
and give a proof of Theorem~\ref{MT2}.

In Section~\ref{Sec:PeriodicPoints}, we describe some algebraic properties
of the Weierstrass method and its periodic points. In the final
Section~\ref{Sec:Cubic} we focus on the case of cubic polynomials, giving
an explicit description of periodic points of low periods; in particular we
give  a proof of Theorem~\ref{MT1}.


\subsection*{Notation and conventions}

All our polynomials will be univariate and over the complex numbers, so we
have polynomials $p \in \C[Z]$ (the indeterminate variable will usually be
called~$Z$). The associated Newton map is denoted~$N_p$, the Weierstrass
map~$W_p$. In general, we denote the $n$-th iterate of a map $F$
by~$F^{\circ n}$. When we want to highlight that a point $z \in \C^d$ is a
vector, we write $\uz$ for $(z_1, \dots, z_d)$. The Jacobi matrix of a map~$F$
at a point~$\uz$ is denoted~$\TotalDiff{F}{\uz}$.

A polynomial $p \in \C[Z]$ is \emph{monic} if its leading coefficient
equals~$1$; that means, if the roots of~$p$ are $\alpha_1, \dots, \alpha_d$,
that $p(Z) = \prod_j (Z-\alpha_j)$. It turns out that both for Newton and for
Weierstrass, it is sufficient to consider monic polynomials.


\subsection*{Acknowledgments}

We gratefully acknowledge support by the European Research Council in the
form of the ERC Advanced Grant~HOLOGRAM.

This research was inspired by discussions with Dario Bini and Victor Pan,
and it completes work initiated jointly with Steffen Maass (now at University
of Michigan). We are grateful for the discussions we had with them, as well
as with other members of our research team.

The algebraic computations described in this paper were done using Magma~\cite{Magma}
and Singular~\cite{Singular}; further numerical computations were done using HomotopyContinuation.jl~\cite{HomotopyContinuation}.


\section{Newton's method and its properties}
\label{Sec:Newton}

Even though the main results in this paper are about the Weierstrass method,
we provide a review of the Newton method in order to build up intuition and
explain analogies, especially since some of these analogies were guiding us
in our research. Interestingly, much more is known about the global dynamics
of Newton's method than about the Weierstrass method.

Newton's method is perhaps the most classical root finding method.
One of its virtues is its simplicity: to find roots of a monic polynomial
$p(Z) = \prod_j (Z-\alpha_j)$, update any approximation $z \in \C$
to a root by
\begin{equation} \label{Eq:NewtonIteration}
  N_p(z) = z - \frac{p(z)}{p'(z)} = z - \left(\sum_j \frac{1}{z-\alpha_j}\right)^{-1}
\end{equation}
and hope that the new number is a better approximation to some root,
at least after a few more iterations. Of course, as long as the roots
of~$p$ are not known, it is the expression in the middle of~\eqref{Eq:NewtonIteration}
that is used to evaluate the Newton iteration.
The right hand side involving the roots cannot be computed, but it may be
helpful in analyzing the properties of the Newton map.
Since only the expression~$p/p'$ enters into the Newton formula,
there is no loss of generality in considering only monic polynomials.

An important property of Newton's method is its compatibility
with affine transformations.
We denote the space of all monic polynomials of degree~$d$ with
complex coefficients by~$\CP'_d$; this is an affine space of dimension~$d$.
It can be identified with~$\C^d$ by taking the coefficients
of~$Z^k$ for $k = 0, 1, \ldots, d-1$ as coordinates. Alternatively,
it can be seen as the quotient $S_d\backslash \C^d$, where $\C^d$
parameterizes the $d$~roots and the symmetric group~$S_d$ acts by permutation
of the coordinates on~$\C^d$.
The group $\Aff(\C)$ of affine transformations of~$\C$ acts on~$\CP'_d$
via its action on the roots of the polynomials.

\begin{Lemma}[Newton's method and affine transformations]
  \label{L:Newton-affine}
  If $p$ is a polynomial and $T \colon \C \to \C$,
  $z \mapsto \alpha z + \beta$, is an affine transformation, then
  \[ N_{Tp} = T \circ N_p \circ T^{-1} \;; \]
  i.e., the Newton dynamics for $p$ and~$Tp$ are affinely conjugate via~$T$.
\end{Lemma}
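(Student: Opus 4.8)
The plan is to verify the identity $N_{Tp} = T \circ N_p \circ T^{-1}$ by a direct computation, exploiting the fact that the Newton map depends only on the logarithmic derivative $p'/p$, together with the chain rule. First I would write $T(z) = \alpha z + \beta$ with $\alpha \neq 0$, so that $T^{-1}(w) = (w - \beta)/\alpha$ and $T'(z) = \alpha$ is constant. The key observation is that if the roots of $p$ are $\alpha_1, \dots, \alpha_d$, then the roots of $Tp$ are $T(\alpha_1), \dots, T(\alpha_d)$ (up to the irrelevant scaling coming from making $Tp$ monic, which cancels in $p/p'$); equivalently, $(Tp)(w) = c \cdot p(T^{-1}(w))$ for a nonzero constant $c$. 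Differentiating, $(Tp)'(w) = c \cdot p'(T^{-1}(w)) \cdot (T^{-1})'(w) = (c/\alpha) \cdot p'(T^{-1}(w))$.

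Next I would substitute these into the Newton formula~\eqref{Eq:NewtonIteration}: for $w \in \C$,
\[
  N_{Tp}(w) = w - \frac{(Tp)(w)}{(Tp)'(w)}
            = w - \frac{c \cdot p(T^{-1}(w))}{(c/\alpha)\, p'(T^{-1}(w))}
            = w - \alpha\, \frac{p(T^{-1}(w))}{p'(T^{-1}(w))}
            = w - \alpha\bigl(T^{-1}(w) - N_p(T^{-1}(w))\bigr).
\]
Since $w = \alpha T^{-1}(w) + \beta = T(T^{-1}(w))$, the right-hand side simplifies to $\alpha\, N_p(T^{-1}(w)) + \beta = T\bigl(N_p(T^{-1}(w))\bigr) = (T \circ N_p \circ T^{-1})(w)$, which is exactly the claimed identity. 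One should also remark that both sides have the same (partially defined) domain: $N_{Tp}$ is undefined precisely where $(Tp)'$ vanishes, i.e., where $p'(T^{-1}(w)) = 0$, which is the image under $T$ of the set where $N_p$ is undefined, matching the domain of $T \circ N_p \circ T^{-1}$.

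There is no real obstacle here; the only thing to be slightly careful about is bookkeeping the monic normalization — strictly speaking $Tp$ as defined by the action on roots need not be monic when $\alpha \neq 1$, but since Newton's map is insensitive to multiplication of $p$ by a nonzero scalar (as the text already notes, only $p/p'$ enters), this causes no problem, and one may simply work with the middle expression in~\eqref{Eq:NewtonIteration}. The statement about affine conjugacy of the dynamics is then an immediate consequence: iterating, $N_{Tp}^{\circ n} = T \circ N_p^{\circ n} \circ T^{-1}$ for all $n$, so orbits of $N_{Tp}$ are carried to orbits of $N_p$ by $T^{-1}$ and vice versa.
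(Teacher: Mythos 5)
Your proof is correct, and it reaches the identity by a slightly different computation than the paper. You work with the middle expression of~\eqref{Eq:NewtonIteration}, $N_p(z) = z - p(z)/p'(z)$, write $(Tp)(w) = c\,p(T^{-1}(w))$ for a nonzero constant $c$, and push the affine substitution through by the chain rule, with explicit bookkeeping of $c$ and of the domains of definition. The paper instead rewrites the Newton step using the right-hand side of~\eqref{Eq:NewtonIteration} in the form
\[ \frac{1}{z - N_p(z)} = \sum_j \frac{1}{z-\alpha_j}\,, \]
and notes that replacing $z$ and each root $\alpha_j$ by $T(z)$ and $T(\alpha_j)$ multiplies every summand, and hence the left-hand side, by $1/\alpha$; this forces $T(z) - N_{Tp}(T(z)) = \alpha\bigl(z - N_p(z)\bigr)$, which is exactly $N_{Tp}\circ T = T\circ N_p$. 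The root-sum form makes the equivariance manifest and renders the monic-normalization issue invisible (no constant $c$ ever appears), at the cost of phrasing everything through the roots; your chain-rule version needs the extra remark that $c$ cancels in $p/p'$, but it works directly with the polynomial and its derivative and also pins down the matching domains, which the paper leaves implicit. Both arguments give the conjugacy of the dynamics by iterating the identity, as you note.
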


\begin{proof}
  The defining equation~\eqref{Eq:NewtonIteration} can be written as
  \[ \frac{1}{z - N_p(z)} = \sum_j \frac{1}{z-\alpha_j} \,, \]
  where the~$\alpha_j$ are the roots of~$p$. From this,
  the claim is obvious.
\end{proof}

The lemma above shows that the dynamics of~$N_p$ is conjugate
(and therefore essentially unchanged) if we
replace~$p$ by another polynomial in its orbit under~$\Aff(\C)$.
So the \emph{true parameter space}~$\CP_d$, i.e., the space of polynomial
Newton maps up to affine conjugation, is the quotient of~$\CP'_d$
by the action of~$\Aff(\C)$. This quotient is not a nice space:
the polynomials with a $d$-fold root have a one-dimensional stabilizer
under~$\Aff(\C)$, whereas for all other polynomials, the stabilizer
is finite. This implies that the closure of any point in~$\CP_d$
contains the point~$\bullet$ representing the polynomials with $d$-fold roots.
Removing this point, however, results in a reasonable space, which
has complex dimension $\dim \CP'_d - \dim \Aff(\C) = d - 2$.

There are two fairly natural ways to construct this space.
We can use the action of~$\Aff(\C)$ to move two of the roots
to $0$ and~$1$. The remaining roots form a $(d-2)$-tuple of
complex numbers specifying the polynomial. This representation
is not unique, since we can re-order the roots (and then
normalize the first two roots again). This gives an action of
the symmetric group~$S_d$, and we obtain $\CP_d \sm \{\bullet\} = S_d \backslash \C^{d-2}$.
We can also use the translations in~$\Aff(\C)$ to make the
polynomial \emph{centered}. i.e., such that the sum of the roots
is zero; equivalently, the coefficient of~$Z^{d-1}$ vanishes.
The set of such polynomials can be identified with~$\C^{d-1}$.
This leaves the action of~$\C^\times$ by scaling the roots,
which has the effect of scaling the coefficient of~$Z^k$
by~$\lambda^{d-k}$ (for $k = 0,\ldots,d-2$). Leaving out the
origin of~$\C^{d-1}$ (it corresponds to the ``bad'' polynomials),
we obtain $\CP_d \sm \{\bullet\}$ as the quotient of
$\C^{d-1} \sm \{0\}$ by this $\C^\times$-action. The resulting
space is a weighted projective space of dimension~$d-2$
with weights~$(2,3,\ldots,d)$.

We now fix a period length~$n$. Then the space
\[ \CP_d(n) = \{(p, q) \in \CP_d \times \C : \text{$q$ has period~$n$ under~$N_p$}\} \]
is a finite-degree cover of~$\CP_d$; it particular, it also has dimension~$d-2$.
On~$\CP_d(n)$ we have the holomorphic map
$\mu_{d,n} \colon (p, q) \mapsto (N_p^{\circ n})'(q)$
associating to each point~$q$ of period~$n$ its multiplier.
It is a standard fact that the cycle consisting of~$q$ and its
iterates is \emph{attracting} (i.e., there is an open neighborhood~$U$
of~$q$ such that for all $z \in U$, the sequence $(N_p^{\circ mn}(z))_{m \ge 0}$
converges to~$q$) if and only if $|\mu_{d,n}(p,q)|<1 $.

A great virtue of Newton's method is its fast local convergence: close to a
simple root, the convergence is quadratic, so the number of valid digits
doubles in every iteration step. Therefore, Newton is often employed for
``polishing'' approximate roots (once the roots have been separated from
each other). Yet another virtue is that it can be applied in a great variety
of contexts, in many dimensions as well as for maps that are smooth but
not analytic.

However, Newton's method is not an algorithm but a heuristic: it is a formula
that suggests a hopefully better approximation to any given initial point~$z$.
This formula says little about the properties of the global dynamics,
which is an iterated rational map. As such, it has a Julia set with
``chaotic'' dynamics, and which may well have positive (planar Lebesgue) measure.
Worse yet, Newton's method can have open sets of starting points
that fail to converge to any root, but instead converge to periodic points
of period~$2$ or higher. Therefore Newton's method fails to be generally convergent.
The problems occur even in the simplest possible case: for the cubic polynomial
$p(Z) = Z^3 - 2Z + 2$, the Newton method has an attracting $2$-cycle,
as illustrated in Figure~\ref{Fig:AttractingCycle}. Steven Smale had observed
this phenomenon, and he asked for a classification of such
polynomials~\cite{SmaleQuestion}*{Problem~6 on p.~98}.
Partially in response to this question,
a complete classification of all (postcritically finite) Newton maps of
arbitrary degrees was developed in~\cite{NewtonClassification};
in particular, it implies the following result.

\begin{Proposition}[Polynomials with attracting periodic orbits]
  \label{P:Newton-cycles}
  For every degree $d \ge 2$, the Newton map of a degree~$d$ polynomial
  can have up to $d-2$ attracting periodic orbits that are not fixed points,
  and the periods can independently be arbitrary numbers $2$ or greater.
  This bound is sharp.
\end{Proposition}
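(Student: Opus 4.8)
The plan is to split the statement into two parts: the upper bound ``at most $d-2$ attracting periodic orbits that are not fixed points'', which I would prove by a direct count of critical points, and the sharpness assertion, which I would deduce from the classification of postcritically finite Newton maps in~\cite{NewtonClassification}. The case $d=2$ is trivial (the bound is then~$0$, and $N_p$ is conjugate to $z\mapsto z^2$, which has only the two superattracting fixed points), so I would assume $d\ge 3$ throughout.

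For the upper bound I would first record that $N_p' = p\,p''/(p')^2$. Hence, if $p$ has $d$ distinct roots so that $N_p$ is a rational map of degree~$d$, the critical points of~$N_p$ are exactly the $d$ roots of~$p$ together with the $d-2$ roots of~$p''$ (with multiplicity). The roots of~$p$ are superattracting fixed points, and $\infty$ is a repelling fixed point (in the coordinate $w = 1/z$ the map $N_p$ has multiplier $d/(d-1) > 1$ there), so the only attracting \emph{fixed} points of~$N_p$ are the roots. By the classical theorem of Fatou, the immediate basin of every attracting cycle contains a critical point; an attracting cycle of period $\ge 2$ contains no root of~$p$ (these are fixed) and meets no basin of a root (points there converge to that root), so its immediate basin must contain one of the $d-2$ roots of~$p''$. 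Since the immediate basins of distinct attracting cycles are pairwise disjoint, there can be at most $d-2$ attracting cycles of period~$\ge 2$. If $p$ has multiple roots, $\deg N_p$ drops and the same argument gives a strictly smaller bound, so the extremal situation is that of simple roots.

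For sharpness I would fix a tuple of periods $n_1,\dots,n_{d-2}$ with each $n_k \ge 2$ and exhibit a degree-$d$ polynomial whose Newton map has $d-2$ attracting cycles of exactly these periods. The true parameter space $\CP_d \sm \{\bullet\}$ has complex dimension~$d-2$, the $d-2$ free critical points (roots of~$p''$) furnishing essentially independent coordinates. I would invoke the classification of~\cite{NewtonClassification}: every admissible combinatorial type of postcritically finite Newton map is realized by an actual polynomial. In particular there is a polynomial~$p$ whose Newton map is postcritically finite with the $k$-th free critical point (pre)periodic onto a cycle of exact period~$n_k$, these cycles being distinct. Such a~$p$ is a center of a hyperbolic component of~$\CP_d$; for all parameters in that component --- an open set --- each of these cycles persists as an attracting cycle of the same period, so $N_p$ has exactly $d-2$ attracting cycles with the prescribed periods. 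As the $n_k$ were arbitrary and chosen independently, the bound $d-2$ is sharp.

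The main obstacle is the sharpness direction: one must know that hyperbolic components whose centers carry the prescribed combinatorics actually exist --- the Newton-map analogue of the fact that the Mandelbrot set has hyperbolic components of every period, together with the ability to ``combine'' such choices across the $d-2$ independent critical points. This non-emptiness, and the simultaneous independent realization of all $d-2$ prescribed periods, is exactly what the Thurston-type realization theorem of~\cite{NewtonClassification} supplies, so I would simply quote it. A minor point to verify in passing is that forcing a free critical point into a period-$n_k$ cycle does not collapse two roots of~$p$ (which would degenerate the polynomial), but this is guaranteed by the admissibility conditions in the classification.
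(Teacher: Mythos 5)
Your proposal follows essentially the same route as the paper: the upper bound via counting the $2d-2$ critical points of~$N_p$ and Fatou's theorem that each attracting cycle must attract one of the $d-2$ critical points not absorbed by the roots, and sharpness by quoting the classification of postcritically finite Newton maps in~\cite{NewtonClassification}. Your write-up is in fact somewhat more careful than the paper's, which offers only a heuristic for the realization of prescribed periods and likewise defers the actual proof to the cited classification.
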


This is a rather weak corollary of the general classification result of
postcritically finite Newton maps, in which the dynamics can be prescribed
with far greater precision. Here we give a heuristic explanation.

The upper bound comes from a well-known fact in holomorphic dynamics.
The Newton map~$N_p$ of a polynomial~$p$ with $d$~distinct roots (of possibly higher
multiplicity) is a rational map of degree~$d$, and as such it has $2d-2$ critical
points. Each of the roots of~$p$ is an attracting fixed point and must attract (at least)
one of these critical points, so up to $d-2$ ``free'' critical points remain.
Each attracting cycle of period at least~$2$ must attract one of these critical
points; thus the bound.

For the lower bound, to establish that up to $d-2$ cycles of period at
least~$2$ can be made attracting, the fundamental observation is that the
multipliers of these cycles form a map from $(d-2)$-dimensional parameter space
to a $(d-2)$-dimensional space of multipliers, so under conditions of genericity
one expects this map to have dense image. This will be not so for Weierstrass;
see Section~\ref{Sec:Weierstrass}.

\begin{figure}[htbp]
  \includegraphics[height=.4\textwidth,trim=150 0 0 0,clip]{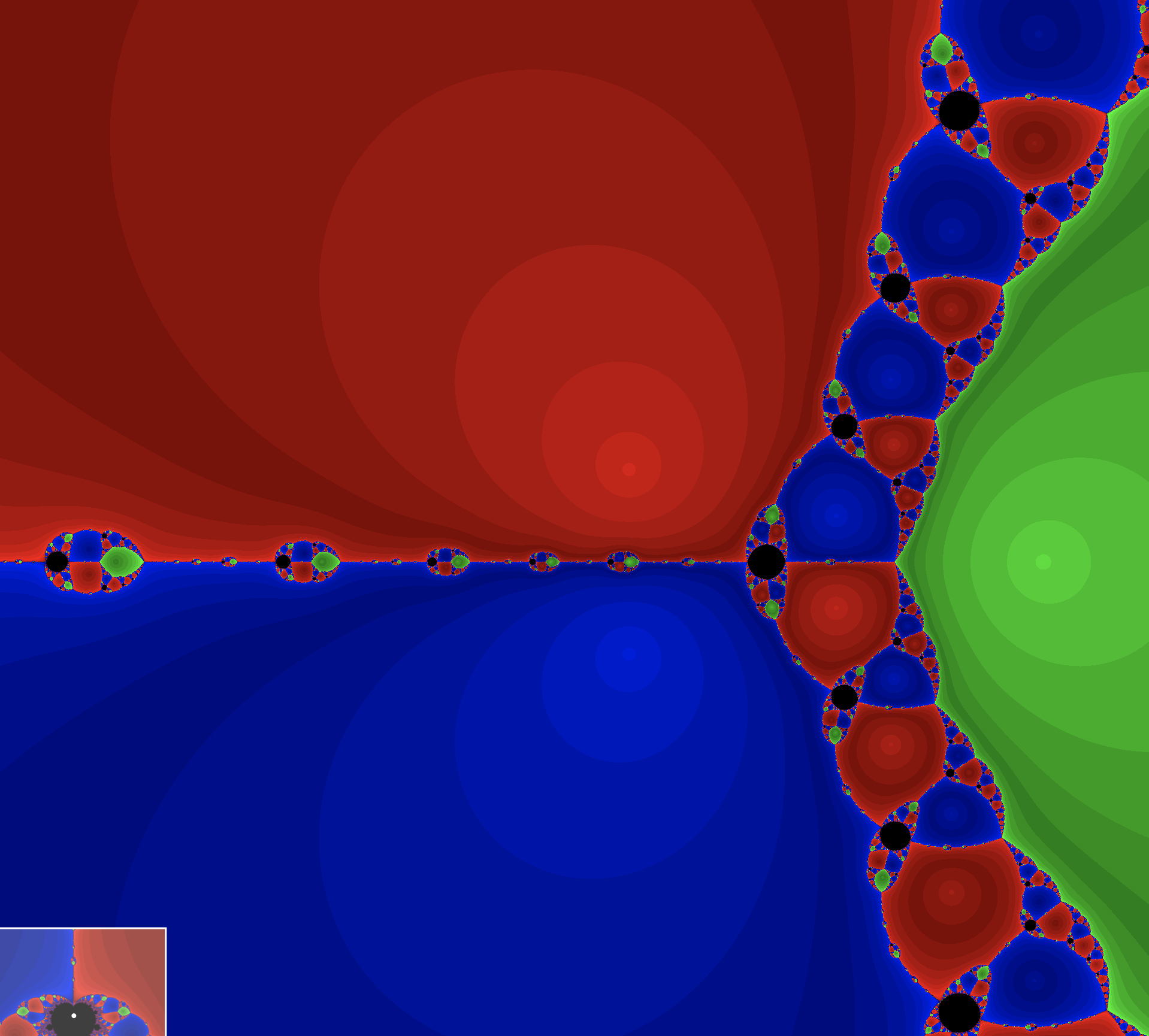}
  \qquad
  \includegraphics[height=.4\textwidth,trim=0 0 100 0,clip]{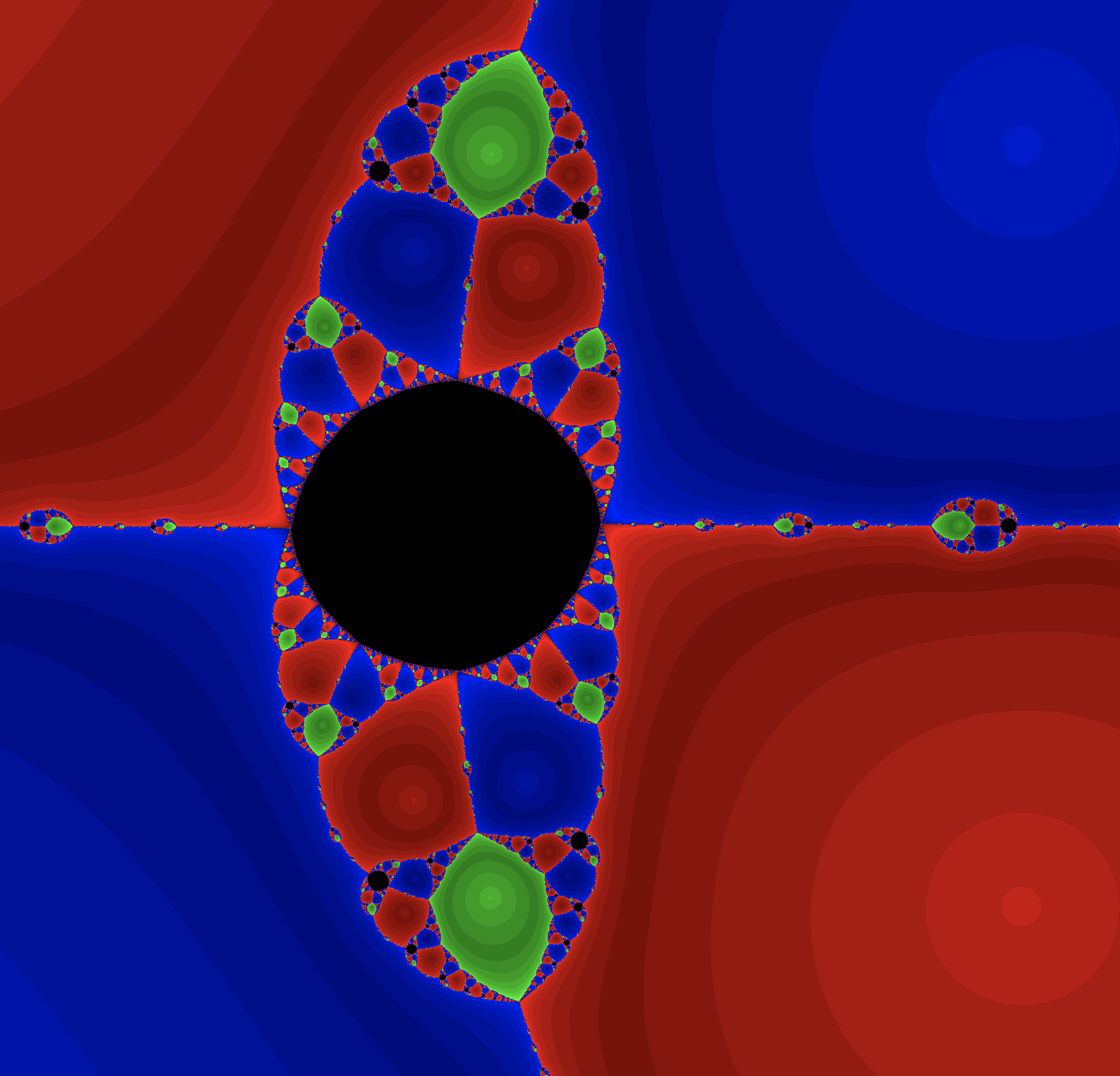}
  \caption{For every degree $d \ge 3$ and every period $m \ge 2$ there is a
           polynomial~$p$ of degree~$d$ so that $N_p$ has a periodic point of
           period~$m$ that attracts a neighborhood of each of its points.
           \textbf{Left:} The Newton dynamics plane for $p(Z) = Z^3 - 2Z + 2$, where
           $N_p(z) = z - \frac{z^3 - 2z + 2}{3z^2 - 2} = \frac{2z^3 - 2}{3z^2 - 2}$
           has an attracting $2$-cycle $0 \mapsto 1 \mapsto 0$; its basin is shown
           in black. \textbf{Right:} Detail near center.}
  \label{Fig:AttractingCycle}
\end{figure}

Newton's method for polynomials of degree~$1$ is trivial: the Newton map
is the constant map with value at the root. For degree~$2$, the dynamics is
very simple as well; we note this here for later use.

\begin{Lemma}[Newton's method for quadratic polynomials]
  \label{L:Newton-deg2}
  If $p$ is a polynomial of degree~$2$ with distinct roots,
  then $N_p$ is conformally conjugate to the squaring map $z \mapsto z^2$
  on the Riemann sphere. In particular, $N_p$ has periodic orbits
  of each exact period at least~$2$, none of which are attracting.
\end{Lemma}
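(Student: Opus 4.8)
The plan is to compute $N_p$ explicitly and exhibit the conjugating M\"obius transformation by hand. Write $p(Z) = (Z-a)(Z-b)$ with $a \neq b$. Specializing the identity from the proof of Lemma~\ref{L:Newton-affine} to $d=2$ gives
\[
  \frac{1}{z - N_p(z)} = \frac{1}{z-a} + \frac{1}{z-b} = \frac{2z-a-b}{(z-a)(z-b)} \,,
\]
so $z - N_p(z) = \frac{(z-a)(z-b)}{2z-a-b}$, and hence
\[
  N_p(z) - a = \frac{(z-a)^2}{2z-a-b} \,, \qquad N_p(z) - b = \frac{(z-b)^2}{2z-a-b} \,.
\]

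First I would introduce the M\"obius transformation $T(z) = \frac{z-a}{z-b}$, which is an automorphism of the Riemann sphere $\C \cup \{\infty\}$ precisely because $a \neq b$, and which sends the roots $a$ and~$b$ to $0$ and~$\infty$. Dividing the two displayed identities gives $T(N_p(z)) = T(z)^2$, i.e.\ $T \circ N_p \circ T^{-1}$ is the squaring map $w \mapsto w^2$ on $\C \cup \{\infty\}$. (Under this conjugacy the roots of~$p$ correspond to the superattracting fixed points $0$ and~$\infty$, while the repelling fixed point $\infty$ of~$N_p$ corresponds to the repelling fixed point $T(\infty) = 1$ of $w \mapsto w^2$.) This proves the first assertion.

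For the second assertion I would analyze $w \mapsto w^2$ directly. A point $w \notin \{0, \infty\}$ of period~$n$ satisfies $w^{2^n} = w$, so it is a $(2^n-1)$-st root of unity and in particular lies on the unit circle. The multiplier of a cycle $w_0 \mapsto w_1 \mapsto \cdots \mapsto w_{n-1} \mapsto w_0$ is $\prod_{k=0}^{n-1} 2 w_k = 2^n \prod_{k} w_k$, which has absolute value $2^n \geq 2 > 1$; thus the only attracting cycles of $w \mapsto w^2$ are the superattracting fixed points $0$ and~$\infty$, and in particular no cycle of period $\geq 2$ is attracting. To see that points of exact period~$n$ occur for every $n$, note that among the $2^n - 1$ solutions of $w^{2^n - 1} = 1$, the ones of period less than~$n$ all satisfy $w^{2^d - 1} = 1$ for some proper divisor $d$ of~$n$, so they number at most $\sum_{d \mid n,\, d < n} (2^d - 1) < 2^n - 1$; hence some solution has exact period~$n$. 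Transporting back via $T$ gives periodic orbits of~$N_p$ of each exact period $\geq 2$, none of which is attracting.

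I do not expect a serious obstacle here: the only genuine step is recognizing that $T(z) = (z-a)/(z-b)$ is the correct linearizing coordinate, after which everything reduces to a short computation together with the classical dynamics of the squaring map. The mildest point needing care is confirming that periodic points of each exact period actually exist (rather than merely that every periodic cycle is repelling), which is handled by the elementary counting argument above.
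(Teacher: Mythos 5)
Your proof is correct and follows essentially the same route as the paper: conjugate $N_p$ to the squaring map via a M\"obius transformation sending the two roots to $0$ and $\infty$, then read off the existence and the multiplier $2^n$ of period-$n$ cycles on the unit circle. The only cosmetic differences are that the paper first normalizes to $p(Z)=Z^2-1$ via Lemma~\ref{L:Newton-affine} before writing down $T$, and that it gets exact period $n$ from a \emph{primitive} $(2^n-1)$-th root of unity rather than by your counting argument.
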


\begin{proof}
  By Lemma~\ref{L:Newton-affine}, we can take $p(Z) = Z^2 - 1$. Then
  \[ N_p(z) = \frac{z^2 + 1}{2z} = T^{-1}\bigl(T(z)^2\bigr) \quad\text{with}\quad
     T(z) = \frac{z+1}{z-1} \,.
  \]
  Now fix~$n$ and let $\omega$ be a primitive $(2^n-1)$-th root
  of unity. Then $\omega$ has exact order~$n$ under the squaring
  map, so $T^{-1}(\omega)$ has exact order~$n$ under~$N_p$.
  The multiplier of~$\omega$ as a point of order~$n$ is~$2^n$,
  and this is the same as the multiplier of~$T^{-1}(\omega)$
  under~$N_p$.
\end{proof}

For completeness, we might note  that the Newton map for a quadratic
polynomial with a double root is conformally conjugate to $z \mapsto z/2$.


\subsection*{Positive results about Newton's method}

Meanwhile, there is a substantial body of knowledge about the global dynamics
of Newton's method, in stark contrast to the Weierstrass method.
Here we mention some of the relevant results.

For the Newton dynamics~$N_p$, any particular orbit may or may not converge
to a root. However, one can estimate that asymptotically at least a fraction
of $1/(2 \log 2) \approx 0.72$
of randomly chosen points in $\C$ will converge to some root
(see~\cite{HSS}*{Section~4}). More explicitly, for every degree~$d$ there
is a universal set~$S_d$ of
starting points that will find, for every polynomial~$p$ of degree~$d$,
normalized so that all roots are in the unit disk, all the roots of~$p$
under iteration of~$N_p$. This set is universal in the sense that it depends
only on~$d$, and it may have cardinality as low as $1.1 d (\log d)^2$~\cite{HSS}.
If one accepts probabilistic results, then $c d (\log\log d)^2$ starting
points are sufficient to find all roots with given probability,
where $c$ depends only on this probability~\cite{BLS}.
Upper bounds on the complexity of
Newton's method to find all roots with prescribed precision~$\eps$ were
established in~\cites{NewtonEfficient,BAS}; they can be as good as
$O(d^2 (\log d)^4 + d \log |\log\eps|)$, which is close to optimal when the
starting points are outside of a disk containing the roots.

In addition to these strong theoretical results, Newton's method has also
been used successfully in practice for finding all roots of polynomials of
degrees exceeding~$10^9$~\cite{NewtonRobin1,NewtonRobin2}, and it is
interesting to compare the experimental complexity between the Newton and
Ehrlich--Aberth methods; see~\cite{NewtonExperiments}: depending on the
efficiency how the polynomials can be evaluated, and how the roots are
located, one or the other method may be faster.

Finally, we might mention that there are several other complex one-dimensional
root finding iteration methods, including K\"onig's method;
see~\cite{XavierChristianRootFinders}. However, there is a theorem by
McMullen~\cite{McMullenRootFinding} that no one-dimensional root
finding method can be generally convergent.
It is natural to ask whether a similar result holds also for
root finding methods in several variables.


\section{The Weierstrass method} \label{Sec:Weierstrass}

The \emph{Weierstrass root finding method}, also known as the Durand--Kerner
me\-thod, tries to approximate all~$d$ roots  of a degree $d$ polynomial
simultaneously (unlike the Newton method, which approximates only one root
at a time). Recall that $\CP'_d$ is the space of
monic polynomials of degree~$d$. Let $p \in \CP'_d$. Then the
Weierstrass root finding method consists of iterating the
(partially defined) map $W_p \colon \C^d \to \C^d$, $\uz \mapsto \uz'$,
where the components~$z'_k$ of~$\uz'$ are given in terms of those of~$\uz$ by
\begin{equation} \label{Eq:WeierstrassIteration}
  z'_k = z_k - \frac{p(z_k)}{\prod_{j\neq k} (z_k-z_j)} \,.
\end{equation}
This map is defined for all $\uz \in \C^d \sm \Delta$, where $\Delta$ is
the ``big diagonal''
\[ \Delta = \{\uz \in \C^d : \text{$z_j = z_k$ for some $1 \le j < k \le d$}\} \,. \]

If $p$ is not necessarily monic, then $W_p$ is defined
to be the same as $W_{p/c}$, where $c$ is the leading coefficient of~$p$.
It is therefore sufficient to consider only monic polynomials.

The Weierstrass method converges on a non-empty open subset of~$\C^d$ to a vector
containing the $d$~roots in some order. It is well known that iteration of $W_p$ may land on $\Delta$ after any number of steps even when the starting point $\uz$ is not in $\Delta$. Moreover, even when an orbit is defined forever it may fail to converge to roots: for instance, when a polynomial is
real but its roots are not, then a vector of purely real initial points
cannot converge to non-real solutions; similar arguments apply in the presence
of other symmetries. More generally, different vectors of starting points
may converge to the roots in different order, and the respective domains
of convergence in~$\C^d$ must have non-empty boundaries on which convergence
cannot occur. The best possible outcome to hope for would be that convergence
to roots occurs on an open dense subset of~$\C^d$, ideally with complement
of measure zero.

Obviously, if $z_k$ is already a root, then the map has a fixed point in
the $k$-th coordinate; all roots already found stabilize in the approximation
vector (as long as they are all distinct).

One heuristic interpretation of the Weierstrass method is as follows. Each of the
$d$~component
variables ``thinks'' that all other roots have already been found and tries
to find its own value necessary to match the value of the polynomial at a
single point. To make this precise, write again $p(Z) = \prod_k (Z-\alpha_k)$.
Take a coordinate $k \in \{1, \dots, d\}$; if we assume that $z_j = \alpha_j$
for all $j \neq k$, then
\begin{equation} \label{Eq:WeierstrassHeuristics}
  p(z_k) = (z_k-\alpha_k) \prod_{j\neq k} (z_k-z_j) \,,
\end{equation}
and then the method simply ``finds'' the missing root~$\alpha_k$ as the only
unknown quantity in~\eqref{Eq:WeierstrassHeuristics} to make the equation fit.
This leads to the Weierstrass iteration formula~\eqref{Eq:WeierstrassIteration}.
For the Weierstrass method, all~$k$ variables make the same ``assumption''
and in general they are all wrong, but it turns out anyway that this leads to
a reasonable approximation of the root vectors, at least sufficiently close
to a true solution.

We will now show that the Weierstrass method can be interpreted as a
higher-dimensional Newton iteration. Consider the map
\[ F \colon \C^d \To \CP'_d \,, \qquad
            (z_1, \dots, z_d) \longmapsto \prod_{k=1}^d (Z-z_k) \,.
\]
Then the task of finding all the roots of~$p$ is equivalent to finding
some preimage of~$p$ under~$F$. To solve this problem, we can employ
Newton's method in~$d$ dimensions. This leads to the iteration
\begin{equation} \label{Eq:WasN}
  \uz \longmapsto \uz - (\TotalDiff{F}{\uz})^{-1} (F(\uz) - p) \,,
\end{equation}
which is defined on the set of~$\uz \in \C^d$ where $\TotalDiff{F}{\uz}$
is invertible, which is the case if and only if $\uz \notin \Delta$.
(The ``if'' direction follows from the proof below; the
``only if'' direction is easy.)

\begin{Lemma}[Weierstrass method as higher-dimensional Newton]
  The map given by~\eqref{Eq:WasN} is~$W_p$.
\end{Lemma}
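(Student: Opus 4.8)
The plan is to compute the Jacobian matrix $\TotalDiff{F}{\uz}$ explicitly, invert the relation in~\eqref{Eq:WasN}, and check that the resulting formula for the $k$-th coordinate agrees with~\eqref{Eq:WeierstrassIteration}. First I would identify the target $\CP'_d$ with $\C^d$ by recording the coefficients of $Z^0, \dots, Z^{d-1}$, so that $F$ sends $\uz$ to the vector of (signed) elementary symmetric functions $e_1(\uz), \dots, e_d(\uz)$ (up to sign). Differentiating, the $(i,k)$ entry of $\TotalDiff{F}{\uz}$ is $\partial e_i/\partial z_k$, which has the classical closed form in terms of the elementary symmetric functions of the remaining variables $z_j$, $j \neq k$. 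Rather than manipulate this matrix head-on, the cleaner route is to observe that for each $k$, the $k$-th column of $\TotalDiff{F}{\uz}$ is exactly the coefficient vector of the polynomial $-\partial_W \prod_j (W - z_j)\big|$ — more precisely, $\prod_{j \neq k}(Z - z_j)$ up to sign — so that $\TotalDiff{F}{\uz}$, read as a map $\C^d \to \CP'_d$, sends the standard basis vector $\mathbf{e}_k$ to (minus) the polynomial $q_k(Z) := \prod_{j \neq k}(Z - z_j)$.

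With that description in hand, the key step is to solve $\TotalDiff{F}{\uz}(\uv) = F(\uz) - p$ for $\uv$. Writing $F(\uz) - p$ as the polynomial $\prod_j(Z - z_j) - p(Z)$, which has degree $\le d-1$ since both $\prod_j(Z-z_j)$ and $p$ are monic of degree $d$, I want to expand it in the basis $\{q_1, \dots, q_d\}$ of the space of polynomials of degree $\le d-1$. The crucial observation is that $q_k(z_m) = 0$ for $m \neq k$, so evaluating the identity $\prod_j(Z-z_j) - p(Z) = -\sum_k v_k\, q_k(Z)$ at $Z = z_k$ gives $0 - p(z_k) = -v_k\, q_k(z_k) = -v_k \prod_{j\neq k}(z_k - z_j)$, hence $v_k = p(z_k)/\prod_{j\neq k}(z_k - z_j)$. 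This is exactly the correction term in~\eqref{Eq:WeierstrassIteration}, so $\uz - \uv = W_p(\uz)$ coordinatewise, which is the claim. As a byproduct, $\{q_1,\dots,q_d\}$ being a basis (Lagrange-type interpolation, valid precisely when the $z_j$ are distinct) shows $\TotalDiff{F}{\uz}$ is invertible exactly off $\Delta$, as asserted in the text preceding the lemma.

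The main obstacle is bookkeeping rather than conceptual: one must be careful about the sign and ordering conventions in the identification $\CP'_d \cong \C^d$ (coefficients versus roots, and whether $e_i$ or $(-1)^i e_i$ is used), and one must justify the interpolation claim — that the $q_k$ span the degree-$\le(d-1)$ polynomials — which comes down to the Vandermonde determinant in the $z_j$ being nonzero off $\Delta$. Everything else is a short linear-algebra computation, and I would present it by working directly in the polynomial model of $\CP'_d$ so that the evaluation-at-$z_k$ trick does all the work and no $d \times d$ matrix is ever written out in full.
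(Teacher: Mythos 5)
Your proposal is correct and follows essentially the same route as the paper: identify the $k$-th column of $\TotalDiff{F}{\uz}$ with the polynomial $-\prod_{j\neq k}(Z-z_j)$, write the Newton step as the polynomial identity $\sum_k (z'_k-z_k)\prod_{j\neq k}(Z-z_j)=\prod_k(Z-z_k)-p$, and evaluate at $Z=z_m$ to recover~\eqref{Eq:WeierstrassIteration}. The Lagrange-basis/Vandermonde remark justifying invertibility of $\TotalDiff{F}{\uz}$ off $\Delta$ is a nice explicit addition; the paper relegates that point to the text preceding the lemma.
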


\begin{proof}
  First note that the partial derivative of~$F$
  with respect to the $k$-th coordinate~$z_k$ is
  \[ \frac{\partial F}{\partial z_k}(\uz) = -\prod_{j \neq k} (Z - z_j) \,, \]
  where the expression on the right is a polynomial of degree less than $d$;
  we identify the space of such polynomials with~$\C^d$.
  If we denote the right hand side of~\eqref{Eq:WasN} by~$\uz'$, we can
  write \eqref{Eq:WasN} in the form
  \begin{equation} \label{Eq:WeierstrassGood}
    \TotalDiff{F}{\uz} (\uz' - \uz) = p - F(\uz) \,.
  \end{equation}
  Written out, this gives
  \begin{equation} \label{Eq:Wgoodexplicit}
    \sum_{k=1}^d (z'_k - z_k) \prod_{j \neq k} (Z - z_j) = \prod_{k=1}^d (Z - z_k) - p \,.
  \end{equation}
  If we assume that the entries of~$\uz$ are distinct and,
  separately for each $m \in \{1, \ldots, d\}$, we set $Z \leftarrow z_m$,
  the product on the right and most products
  on the left vanish and the remaining equation gives~\eqref{Eq:WeierstrassIteration}
  (with $m$ in place of~$k$).
\end{proof}

The following local convergence result is well known.

\begin{Lemma}[Local convergence of the Weierstrass method]
  \label{Lem:LocalConvergenceWeierstrass}
  Every vector consisting of the $d$~roots of~$p$ has a neighborhood in~$\C^d$
  on which the Weierstrass method converges to this solution vector.
  This convergence is quadratic when $p$ has no multiple roots.
\end{Lemma}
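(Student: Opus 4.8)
The plan is to exploit the identification of the Weierstrass iteration with a higher-dimensional Newton method, which was just established in the previous lemma. For an ordinary (one-dimensional) Newton map $N_f$ applied to an analytic map $f$ with a simple zero $\alpha$, it is classical that $\alpha$ is a superattracting-type fixed point with quadratic convergence, because $N_f'(\alpha) = 0$. The multidimensional analogue is equally standard: if $F \colon \C^d \to \C^d$ is holomorphic and $\underline{a}$ is a point with $F(\underline{a}) = p$ at which $\TotalDiff{F}{\underline{a}}$ is invertible, then the Newton iteration $\uz \mapsto \uz - (\TotalDiff{F}{\uz})^{-1}(F(\uz)-p)$ has $\underline{a}$ as a fixed point whose Jacobian vanishes, so $\underline{a}$ attracts a neighborhood and the convergence is (at least) quadratic. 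I would first record this general fact, with a one-line proof: differentiating $G(\uz) = \uz - (\TotalDiff{F}{\uz})^{-1}(F(\uz)-p)$ at $\underline{a}$ and using $F(\underline{a})-p = 0$, the only surviving term is $I - (\TotalDiff{F}{\underline{a}})^{-1}\TotalDiff{F}{\underline{a}} = 0$; then a standard fixed-point/Taylor estimate gives $\|G^{\circ m}(\uz) - \underline{a}\| \le C\|\uz-\underline{a}\|^{2^m}$ on a small ball, which is exactly quadratic convergence.

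Next I would apply this to our situation. The map in question is $F \colon \C^d \to \CP'_d$, $\underline{z} \mapsto \prod_k(Z-z_k)$, and a solution vector $\underline{\alpha}$ consisting of the $d$ roots of $p$ (in any order) satisfies $F(\underline{\alpha}) = p$ by construction. When $p$ has no multiple roots, the entries of $\underline{\alpha}$ are distinct, so $\underline{\alpha} \notin \Delta$, and by the remark preceding the previous lemma $\TotalDiff{F}{\underline{\alpha}}$ is invertible. Hence the general fact applies verbatim: $\underline{\alpha}$ is a fixed point of $W_p$ with vanishing Jacobian, it attracts a neighborhood in $\C^d$, and the convergence is quadratic. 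This disposes of the second sentence of the lemma and of the first sentence in the simple-root case.

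The only remaining point is the first sentence \emph{without} the hypothesis that $p$ has simple roots: we still need every root vector to have a convergent neighborhood, even when some $\alpha_j$ coincide. But if $p$ has a multiple root, then every vector $\underline{\alpha}$ listing the $d$ roots of $p$ (with multiplicity) has a repeated entry, hence $\underline{\alpha} \in \Delta$, and $W_p$ is not even defined there — so literally the statement would be vacuous or false as phrased. I would therefore read the first sentence as implicitly restricted to the situation where the listed root vector has distinct entries (which is automatic precisely when $p$ is squarefree), or alternatively I would simply merge the two sentences and state the lemma under the blanket hypothesis that $p$ has only simple roots; in either reading the argument above is complete. If one instead wants a statement about convergence to the \emph{set} of roots when there are multiplicities, one can note that $W_p$ restricted to a suitable stratum is again a Newton iteration for a reduced map, but this is a digression and I would not pursue it here.

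The main obstacle, such as it is, is not any hard estimate — the quadratic-convergence bound is the routine multidimensional Kantorovich/Newton argument — but rather getting the invertibility of $\TotalDiff{F}{\underline{\alpha}}$ cleanly and making sure the degeneracy issue in the non-squarefree case is handled honestly rather than swept under the rug. For the invertibility one can give an explicit formula: in the basis of polynomials of degree $< d$, the columns of $\TotalDiff{F}{\underline{\alpha}}$ are the $-\prod_{j\neq k}(Z-\alpha_j)$, and these are linearly independent exactly when the $\alpha_j$ are distinct (a Lagrange-interpolation argument, or equivalently the determinant is a nonzero multiple of the Vandermonde $\prod_{j<k}(\alpha_j-\alpha_k)$). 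With that in hand the proof is a few lines.
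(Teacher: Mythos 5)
Your proposal is correct and follows exactly the paper's route: the paper's entire proof is the one-line remark that $W_p$ is Newton's method applied to $F(\uz)-p$, and your fleshing out of the vanishing Jacobian, the quadratic Taylor estimate, and the Vandermonde argument for the invertibility of $\TotalDiff{F}{\underline{\alpha}}$ is precisely the standard content that remark appeals to. Your observation that the first sentence is degenerate when $p$ has multiple roots (the root vector then lies in $\Delta$, where $W_p$ is undefined) is a legitimate point that the paper's terse proof also leaves unaddressed, and your proposed reading is the sensible one.
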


\begin{proof}
  This follows from the fact that $W_p$ is Newton's method applied
  to $F(\uz) - p$.
\end{proof}


\subsection{Properties of the Weierstrass method}

We state some elementary and well known properties of~$W_p$ that will be
important to us.

\begin{Lemma}[Simple properties of the Weierstrass method] \label{L:Weierstrass-1} \strut
  \begin{enumerate}[\upshape(1)]
    \item Let $p \in \CP'_d$ and $T \in \Aff(\C)$. Then $W_{Tp}$
          is is conformally conjugate to~$W_p$ by~$T$, i.e.,
          $W_{Tp} = T \circ W_p \circ T^{-1}$, where the action of~$T$
          on~$\C^d$ is component-wise.
    \item For each $p \in \CP'_d$,
          $W_p$ is equivariant with respect to the natural action
          of the symmetric group$~S_d$ on on~$\C^d$ by permuting
          the coordinates: if $\sigma \in S_d$, then $W_p(\sigma \uz) = \sigma W_p(\uz)$.
  \end{enumerate}
\end{Lemma}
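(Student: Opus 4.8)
The plan is to prove both statements by direct computation from the explicit Weierstrass formula~\eqref{Eq:WeierstrassIteration}; no holomorphic dynamics is needed, only algebra, exactly in the spirit of the proof of Lemma~\ref{L:Newton-affine}. (Alternatively, part~(1) follows formally from the description of~$W_p$ as the $d$-dimensional Newton map solving $F(\uz)=p$, together with functoriality of Newton's method under affine changes of both the source and the target coordinates, since $F\circ T=\hat T\circ F$ for the affine map $\hat T\colon q(Z)\mapsto\alpha^d q(T^{-1}Z)$ on~$\CP'_d$; but the hands-on version is shorter to write.)

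For part~(1), write $T(z)=\alpha z+\beta$ with $\alpha\neq 0$. If the roots of~$p$ are $\alpha_1,\dots,\alpha_d$, then the monic polynomial with roots $T\alpha_1,\dots,T\alpha_d$ is $Tp(Z)=\prod_j(Z-\alpha\alpha_j-\beta)=\alpha^d\prod_j(T^{-1}Z-\alpha_j)=\alpha^d\,p(T^{-1}Z)$, and the right-hand side is already monic. Now apply $W_{Tp}$ to the vector $T\uz:=(Tz_1,\dots,Tz_d)$. In the $k$-th coordinate one gets $Tp(Tz_k)=\alpha^d p(z_k)$ and $\prod_{j\neq k}(Tz_k-Tz_j)=\alpha^{d-1}\prod_{j\neq k}(z_k-z_j)$, so that
\[
  \bigl(W_{Tp}(T\uz)\bigr)_k = Tz_k-\frac{\alpha^d p(z_k)}{\alpha^{d-1}\prod_{j\neq k}(z_k-z_j)}
  = \alpha\Bigl(z_k-\frac{p(z_k)}{\prod_{j\neq k}(z_k-z_j)}\Bigr)+\beta = T\bigl((W_p(\uz))_k\bigr).
\]
Hence $W_{Tp}\circ T=T\circ W_p$ wherever both sides are defined; since $T$ maps $\Delta$ bijectively onto~$\Delta$, the partial domains coincide, and the claimed identity $W_{Tp}=T\circ W_p\circ T^{-1}$ follows.

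For part~(2), write $(\sigma\uz)_k=z_{\sigma^{-1}(k)}$ for $\sigma\in S_d$. As $j$ runs over $\{1,\dots,d\}\setminus\{k\}$, the index $\sigma^{-1}(j)$ runs over $\{1,\dots,d\}\setminus\{\sigma^{-1}(k)\}$, so $\prod_{j\neq k}\bigl((\sigma\uz)_k-(\sigma\uz)_j\bigr)=\prod_{i\neq\sigma^{-1}(k)}(z_{\sigma^{-1}(k)}-z_i)$; substituting this and $(\sigma\uz)_k=z_{\sigma^{-1}(k)}$ into~\eqref{Eq:WeierstrassIteration} yields $\bigl(W_p(\sigma\uz)\bigr)_k=\bigl(W_p(\uz)\bigr)_{\sigma^{-1}(k)}=\bigl(\sigma W_p(\uz)\bigr)_k$, which is the asserted equivariance (and again $\sigma$ preserves~$\Delta$, so the domains agree).

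There is no genuine obstacle here; the only points that need care are the leading-coefficient bookkeeping in the identity $Tp(Z)=\alpha^d p(T^{-1}Z)$ — which is what licenses the use of the \emph{monic} Weierstrass formula for~$W_{Tp}$ — and the observation that $T$ and the permutations preserve the diagonal~$\Delta$, so that the two sides of each identity have the same domain of definition.
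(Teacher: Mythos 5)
Your proof is correct and is essentially the same kind of direct algebraic verification the paper gives: the paper checks part~(1) by observing that the identity~\eqref{Eq:Wgoodexplicit} is invariant under simultaneously applying~$T$ to $\alpha_k$, $z_k$, $z'_k$ and~$Z$, while you carry out the equivalent coordinate-wise computation on~\eqref{Eq:WeierstrassIteration}, with the $\alpha^d/\alpha^{d-1}$ bookkeeping made explicit; part~(2) the paper simply declares clear, and your reindexing argument is the computation behind that. No gaps.
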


\begin{proof} \strut
  \begin{enumerate}[\upshape(1)]
    \item Writing $p = \prod_{k=1}^d (Z - \alpha_k)$ in~\eqref{Eq:Wgoodexplicit},
          we see that the relation is unchanged when we replace $\alpha_k$, $z_k$,
          $z'_k$ and~$Z$ by their images under~$T$.
          Undoing the transformation on~$Z$ then gives a valid equation between
          polynomials, which is equivalent to $W_{Tp}(T \uz) = T W_p(\uz)$,
          or $W_{Tp} = T \circ W_p \circ T^{-1}$, where the action of affine
          transformations on~$\C^d$ is coordinate-wise.
    \item This is clear.
    \qedhere
  \end{enumerate}
\end{proof}

By the first property we can use the same parameter space~$\CP_d$ for
the Weierstrass iteration on polynomials of degree~$d$ as we did for
Newton's method.

Equation~\eqref{Eq:Wgoodexplicit} leads to a simple proof of the following
useful property.

\begin{Lemma}[Invariant hyperplane] \label{L:Weierstrass-2}
  Let $p = Z^d - a Z^{d-1} + \ldots$. Then the sum of the entries
  of~$W_p(\uz)$ is~$a$, for all $\uz \in \C^d \sm \Delta$.
\end{Lemma}

\begin{proof}
  Comparing coefficients of~$Z^{d-1}$ in~\eqref{Eq:Wgoodexplicit},
  we see that
  \[ \sum_{k=1}^d (z'_k - z_k) = -\sum_{k=1}^d z_k + a\,, \]
  which gives the claim.
\end{proof}

This means that the dynamics is effectively only $(d-1)$-dimensional and
takes place on the hyperplane $z_1 + \ldots + z_d = a$.
As mentioned earlier, we can restrict to centered polynomials,
i.e., $a = 0$.

\begin{Lemma}[Degree reduction if root is present] \label{L:Weierstrass-3}
  Fix $k \in  \{1,\ldots,d\}$.
  If $z_k$ is a root of~$p$ and $\uz \in \C^d \sm \Delta$,
  then $W_p(\uz)_k = z_k$, and the dynamics on
  the remaining entries is that of the Weierstrass method for~$p(Z)/(Z-z_k)$.
\end{Lemma}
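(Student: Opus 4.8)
The plan is to start from the explicit global relation~\eqref{Eq:Wgoodexplicit}, which characterizes $W_p(\uz) = \uz'$ as the unique vector satisfying
\[
  \sum_{k=1}^d (z'_k - z_k) \prod_{j \neq k} (Z - z_j) = \prod_{k=1}^d (Z - z_k) - p \,,
\]
and to exploit the hypothesis that $z_k$ is a root of~$p$ by substituting $Z \leftarrow z_k$. Indeed, since $\uz \notin \Delta$ the entries are distinct, so on the left-hand side every product $\prod_{j \neq i}(Z - z_j)$ with $i \neq k$ vanishes at $Z = z_k$, leaving only the term with $i = k$; the first product on the right also vanishes there, and $p(z_k) = 0$ by assumption. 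This forces $(z'_k - z_k)\prod_{j \neq k}(z_k - z_j) = 0$, and since the product is nonzero we get $z'_k = z_k$, which is the first assertion.

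For the second assertion I would substitute this information $z'_k = z_k$ back into~\eqref{Eq:Wgoodexplicit}. The term $(z'_k - z_k)\prod_{j\neq k}(Z - z_j)$ then drops out entirely, so the identity reduces to
\[
  \sum_{i \neq k} (z'_i - z_i) \prod_{\substack{j \neq i \\ j \neq k}} (Z - z_j) \cdot (Z - z_k)
    = (Z - z_k)\prod_{i \neq k}(Z - z_i) - p \,.
\]
Here I used that for $i \neq k$ the product $\prod_{j \neq i}(Z - z_j)$ contains the factor $(Z - z_k)$. Since $z_k$ is a root of~$p$, we can write $p(Z) = (Z - z_k)\, \tilde p(Z)$ with $\tilde p \in \CP'_{d-1}$. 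Dividing the displayed identity through by $(Z - z_k)$ — legitimate as an identity of polynomials, since $(Z-z_k)$ divides both sides — yields exactly
\[
  \sum_{i \neq k} (z'_i - z_i) \prod_{\substack{j \neq i \\ j \neq k}} (Z - z_j)
    = \prod_{i \neq k}(Z - z_i) - \tilde p(Z) \,,
\]
which is precisely the defining relation~\eqref{Eq:Wgoodexplicit} for the Weierstrass map of the degree-$(d-1)$ polynomial $\tilde p = p/(Z - z_k)$ applied to the vector $(z_i)_{i \neq k} \in \C^{d-1}$. This identifies the induced dynamics on the remaining entries with $W_{p/(Z-z_k)}$.

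This argument is essentially a routine manipulation of a single polynomial identity, so I do not anticipate a genuine obstacle; the only point requiring a little care is the division step, namely checking that $(Z - z_k)$ divides the left-hand side of the reduced identity as a polynomial (not merely that both sides agree at $Z = z_k$) — but this is automatic once we have substituted $z'_k = z_k$, since then every surviving summand on the left visibly carries the factor $(Z - z_k)$, and on the right $(Z-z_k)$ divides both $(Z-z_k)\prod_{i\neq k}(Z-z_i)$ and $p$. One should also note for completeness that $\uz' \notin \Delta$ need not hold, but the statement only concerns the values of $W_p$ where it is defined, so no issue arises.
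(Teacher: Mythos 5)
Your proof is correct. The paper itself offers only ``Clear from the definition,'' and the intended one-line argument is presumably the direct one from~\eqref{Eq:WeierstrassIteration}: since $p(z_k)=0$ the $k$-th update vanishes, and for $i\neq k$ one cancels the common factor $(z_i-z_k)$ in $p(z_i)/\prod_{j\neq i}(z_i-z_j)$ to obtain $\tilde p(z_i)/\prod_{j\neq i,k}(z_i-z_j)$ with $\tilde p = p/(Z-z_k)$. Your route through the polynomial identity~\eqref{Eq:Wgoodexplicit} is an equivalent packaging of the same computation: substituting $Z\leftarrow z_k$ gives $z'_k=z_k$, and dividing the reduced identity by $(Z-z_k)$ reproduces the defining relation for $W_{\tilde p}$. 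Both steps are carried out correctly, and your remarks on the legitimacy of the division and on $\uz'$ possibly landing in $\Delta$ are apt; the only point you leave implicit is that the truncated vector $(z_i)_{i\neq k}$ still has distinct entries (immediate from $\uz\notin\Delta$), which is needed to identify the solution of the reduced linear system with $W_{\tilde p}$ evaluated at that vector.
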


\begin{proof}
  Clear from the definition.
\end{proof}

\begin{Lemma}[Weierstrass in degree~$2$ is Newton] \label{L:Weierstrass-4}
  If $p$ has degree~$2$, then the dynamics of~$W_p$ reduces to Newton's
  method for~$p$.
  In particular, for $p$ with distinct roots, $W_p$ restricted
  to the invariant hyperplane (which is a line in this case)
  is conjugate to the squaring map $z \mapsto z^2$,
  which has no attracting cycles that are not fixed points.
\end{Lemma}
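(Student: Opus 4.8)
The plan is to use Lemma~\ref{L:Weierstrass-2}, the invariant hyperplane property, to pin down the one-dimensional dynamics, and then identify it with Newton's method in degree~$2$. Concretely: write a monic quadratic $p(Z) = (Z-\alpha_1)(Z-\alpha_2)$, so the coefficient of~$Z^{d-1} = Z^1$ is $a = \alpha_1 + \alpha_2$. By Lemma~\ref{L:Weierstrass-2}, for every $\uz = (z_1, z_2) \notin \Delta$ we have $z'_1 + z'_2 = a$, so the orbit lands on the line $L = \{z_1 + z_2 = a\}$ after one step and stays there forever. Hence $W_p$ restricted to~$L$ is the essential dynamics, and $L$ is a one-dimensional affine space. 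The natural coordinate on~$L$ is $z_1$ (with $z_2 = a - z_1$); I would substitute $z_2 = a - z_1$ into the defining formula~\eqref{Eq:WeierstrassIteration} for $z'_1$ and simplify.

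Next I would check that this one-dimensional map coincides with~$N_p$. The Weierstrass update in the first coordinate is $z'_1 = z_1 - \dfrac{p(z_1)}{z_1 - z_2}$. On~$L$ we have $z_1 - z_2 = z_1 - (a - z_1) = 2z_1 - a$, and since $p(Z) = Z^2 - aZ + b$ we get $p'(Z) = 2Z - a$, so $z_1 - z_2 = p'(z_1)$. Therefore $z'_1 = z_1 - p(z_1)/p'(z_1) = N_p(z_1)$. This is the key identification: on the invariant line, Weierstrass in degree~$2$ is literally Newton applied to~$p$. It remains to observe that a point of~$L$ is periodic (resp.\ attracting) under~$W_p$ exactly when its first coordinate is periodic (resp.\ attracting) under~$N_p$; this is immediate because the coordinate $z_1 \mapsto (z_1, a-z_1)$ is an affine isomorphism $\C \xrightarrow{\sim} L$ conjugating the two maps, and affine conjugacy preserves periods and multipliers.

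Finally I would invoke Lemma~\ref{L:Newton-affine} and Lemma~\ref{L:Newton-deg2}: when $p$ has distinct roots, $N_p$ is affinely conjugate to the Newton map of $Z^2 - 1$, which by Lemma~\ref{L:Newton-deg2} is conformally conjugate to the squaring map $z \mapsto z^2$ on~$\widehat{\C}$. The squaring map has periodic points of every exact period $n \ge 2$ (the primitive $(2^n - 1)$-th roots of unity), all with multiplier $2^n$ in modulus $> 1$, hence repelling; in particular none of these cycles is attracting. Since conformal conjugacy preserves multipliers, the same holds for~$N_p$ and therefore for~$W_p$ on~$L$. Points off~$L$ are mapped onto~$L$ after one iteration and so cannot lie on any cycle, which shows $W_p$ has no attracting cycles other than fixed points (the fixed points being the two root vectors $(\alpha_1, \alpha_2)$ and $(\alpha_2, \alpha_1)$).

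I do not expect a serious obstacle here: the only thing to be slightly careful about is the bookkeeping of which coefficient plays the role of~$a$ in Lemma~\ref{L:Weierstrass-2} (it is the coefficient of~$Z^{d-1}$, which for $d = 2$ is the linear coefficient, up to the sign convention $p = Z^d - aZ^{d-1} + \cdots$), and the remark that the claim about ``no attracting cycles'' means no attracting cycles \emph{of period} $\ge 2$, since the root vectors are of course attracting fixed points by Lemma~\ref{Lem:LocalConvergenceWeierstrass}. Both points are routine once the identification $z_1 - z_2 = p'(z_1)$ on~$L$ is in hand.
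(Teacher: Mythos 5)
Your proposal is correct and follows essentially the same route as the paper: use the invariant-hyperplane Lemma~\ref{L:Weierstrass-2} to reduce to the line $z_1+z_2=a$, identify the induced one-dimensional map with $N_p$ by a direct computation, and then invoke Lemma~\ref{L:Newton-deg2}. The only cosmetic difference is that the paper first normalizes to $p(Z)=Z^2-1$ via Lemma~\ref{L:Weierstrass-1} before computing, whereas you work with a general quadratic through the identity $z_1-z_2=p'(z_1)$ on the invariant line; both computations are the same in substance.
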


\begin{proof}
  By Lemma~\ref{L:Weierstrass-1}, we can assume that $p(Z) = Z^2 - 1$
  if $p$ has distinct roots.
  By Lemma~\ref{L:Weierstrass-2}, all iterates after the initial vector will
  have the form $(z, -z)$. It is then easy to check that $W_p(z, -z) = (w, -w)$
  with $w = z - (z^2-1)/(2z) = N_p(z)$. The last claim follows from
  Lemma~\ref{L:Newton-deg2}.

  If $p$ has a double root, then $N_p$ and $W_p$ are conjugate
  to $N_{Z^2}$ and~$W_{Z^2}$, respectively; again, $W_{Z^2}$
  agrees with~$N_{Z^2}$ when restricted to the invariant line.
\end{proof}

When $p$ is linear, then $N_p$ and~$W_p$ both find the unique
root immediately by definition. So this lemma tells
us that interesting behavior in the Weierstrass method can occur only when
$d \ge 3$.

When looking for periodic orbits under~$W_p$, Lemma~\ref{L:Weierstrass-3}
tells us that we can assume that no entry of~$\uz$ is a root of~$p$,
since otherwise we can reduce to a case of lower degree.
However, this very observation allows us to promote counterexamples
of low degrees to higher degrees. To do this, we need the following lemma.

\begin{Lemma}[Lifting to higher degrees] \label{L:lift}
  Let $p$ be a monic polynomial of degree~$d$ and let $\alpha \in \C$.
  Set $\tilde{p}(Z) = (Z-\alpha) p(Z)$.
  \begin{enumerate}[\upshape(1)]
    \item For a point $\uz = (z_1, \ldots, z_d, \alpha)$
          with pairwise distinct entries,
          the Jacobi matrix $\TotalDiff{W_{\tilde{p}}}{\uz}$ has the form
          \[ \TotalDiff{W_{\tilde{p}}}{\uz}
              = \begin{pmatrix}
                          \TotalDiff{W_p}{\uz'} & *_{d \times 1} \\
                          0_{1 \times d} & \lambda
                \end{pmatrix}
          \]
          with $\uz' = (z_1, \ldots, z_d)$ and
          \[ \lambda = 1 - \frac{p(\alpha)}{\prod_{j=1}^d (\alpha - z_j)} \,. \]
    \item If $q \in \C^d$ is a periodic point of~$W_p$ of period~$n$
          such that all eigenvalues of~$\TotalDiff{W_p^{\circ n}}{q}$
          have absolute values strictly less than~$1$, then for $|\alpha|$
          sufficiently large, $\tilde{q}: = (q, \alpha) \in \C^{d+1}$ is a periodic
          point of~$W_{\tilde{p}}$ of period~$n$ such that
          all eigenvalues of~$\TotalDiff{W_{\tilde{p}}^{\circ n}}{\tilde{q}}$
          have absolute values strictly less than~$1$.
  \end{enumerate}
\end{Lemma}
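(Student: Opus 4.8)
The plan is to compute the Jacobi matrix of $W_{\tilde p}$ at the point $\uz = (z_1,\dots,z_d,\alpha)$ directly from the defining formula~\eqref{Eq:WeierstrassIteration}, exploiting that the last coordinate $\alpha$ is a root of $\tilde p$. First I would write out the components $z'_i$ of $W_{\tilde p}(\uz)$. For $i \le d$ we have $z'_i = z_i - \tilde p(z_i)/\prod_{j \ne i,\, j\le d}(z_i - z_j)\cdot(z_i-\alpha)^{-1}$; since $\tilde p(z_i) = (z_i-\alpha)p(z_i)$, the factor $(z_i-\alpha)$ cancels and we get exactly the $i$-th component of $W_p(z_1,\dots,z_d)$ --- this is the content of Lemma~\ref{L:Weierstrass-3} in disguise. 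For $i = d+1$ we have $z'_{d+1} = \alpha - \tilde p(\alpha)/\prod_{j\le d}(\alpha - z_j) = \alpha$, because $\tilde p(\alpha) = 0$; so the last coordinate is fixed.

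Now I would differentiate. Because the last coordinate of the output equals $\alpha$ identically \emph{in a neighborhood} of $\uz$ only along the slice where the last input coordinate is $\alpha$ --- actually $z'_{d+1}$ is a genuine function of all $d+1$ inputs --- I must be careful: the entry $\partial z'_{d+1}/\partial z_i$ for $i \le d$ need not vanish a priori. But it does vanish at our point: $z'_{d+1} = z_{d+1} - \tilde p(z_{d+1})/\prod_{j\le d}(z_{d+1}-z_j)$, and $\partial z'_{d+1}/\partial z_i$ involves $\tilde p(z_{d+1})$ as a factor (the $z_i$-dependence sits only in the product in the denominator), which is $0$ at $z_{d+1}=\alpha$. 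Hence the bottom row of the Jacobian is $(0,\dots,0,\lambda)$ with $\lambda = \partial z'_{d+1}/\partial z_{d+1}$ evaluated at $\uz$; a short computation using $\tilde p(\alpha)=0$ and $\tilde p'(\alpha) = p(\alpha)$ (the derivative of $(Z-\alpha)p$ at $Z=\alpha$) gives $\lambda = 1 - \tilde p'(\alpha)/\prod_{j\le d}(\alpha-z_j) = 1 - p(\alpha)/\prod_{j\le d}(\alpha - z_j)$, as claimed. For the upper-left $d\times d$ block, since $z'_i$ ($i\le d$) coincides with the $i$-th component of $W_p(z_1,\dots,z_d)$ as a function of $(z_1,\dots,z_d)$ only (no $\alpha$-dependence survives the cancellation --- wait, it does, through the original formula before cancellation, so I should instead argue: $z'_i$ as a function equals $z_i - p(z_i)/\prod_{j\ne i, j\le d}(z_i-z_j)$ identically near $\uz$, with no $z_{d+1}$ in it at all), the partial derivatives $\partial z'_i/\partial z_j$ for $i,j\le d$ reproduce $\TotalDiff{W_p}{\uz'}$ exactly, and $\partial z'_i/\partial z_{d+1}$ gives the starred column. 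This proves part~(1).

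For part~(2), the idea is a perturbation/continuity argument as $|\alpha|\to\infty$. As $|\alpha|\to\infty$ the new coordinate contributes a factor $(Z-\alpha)$ whose effect on the first $d$ coordinates of $W_{\tilde p}$ is a lower-order correction: on the slice $\{z_{d+1}=\alpha\}$ the first $d$ coordinates evolve exactly by $W_p$, so $\tilde q = (q,\alpha)$ with $W_{\tilde p}(\tilde q)$ having first $d$ coordinates $W_p(q)$ and last coordinate $\alpha$ (by part~(1)'s computation $z'_{d+1}=\alpha$); iterating, $\tilde q$ is periodic of period $n$ for $W_{\tilde p}$, with the same period as $q$ under $W_p$ provided no coincidence of coordinates is forced --- this holds for $|\alpha|$ large since $q$ has distinct entries and $\alpha$ can be taken far from all of them and their (finitely many) $W_p$-iterates. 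For the eigenvalues: by part~(1) applied along the periodic orbit, $\TotalDiff{W_{\tilde p}^{\circ n}}{\tilde q}$ is block upper-triangular with upper-left block $\TotalDiff{W_p^{\circ n}}{q}$ (whose eigenvalues are $<1$ in modulus by hypothesis) and bottom-right scalar $\prod_{\ell=0}^{n-1}\lambda_\ell$, where $\lambda_\ell = 1 - p(\alpha)/\prod_{j}(\alpha - (W_p^{\circ\ell}q)_j)$; as $|\alpha|\to\infty$ each $\lambda_\ell \to 1 - \alpha^d/\alpha^d = 0$ --- more precisely $p(\alpha)/\prod_j(\alpha - (W_p^{\circ\ell}q)_j) = \alpha^d(1+O(1/\alpha))/(\alpha^d(1+O(1/\alpha))) \to 1$, so $\lambda_\ell\to 0$ and the product of the $\lambda_\ell$ tends to $0$, in particular has modulus $<1$ for $|\alpha|$ large. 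Since the eigenvalues of a block-triangular matrix are the eigenvalues of the diagonal blocks, all eigenvalues of $\TotalDiff{W_{\tilde p}^{\circ n}}{\tilde q}$ are $<1$ in modulus, which is the claim.

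The main obstacle is purely bookkeeping: making sure the cancellation of $(Z-\alpha)$ is done at the level of \emph{functions} (so that the claimed zero entries of the Jacobian really are identically zero, not just zero at the point) where it can be, and recognizing where a factor is only \emph{pointwise} zero because it is multiplied by $\tilde p(\alpha)=0$; plus verifying the asymptotics $\lambda\to 0$ uniformly along the (finite) periodic orbit so that a single large $|\alpha|$ works for all $n$ points at once. No single step is deep; the care lies in the chain rule applied to a map whose formula has the root of $\tilde p$ sitting in a denominator.
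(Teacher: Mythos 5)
Your proposal is correct and follows essentially the same route as the paper, which dispatches part~(1) as ``an easy computation'' and proves part~(2) exactly as you do: periodicity of $(q,\alpha)$ via Lemma~\ref{L:Weierstrass-3}, block-triangularity of the chain-rule product with bottom-right entry $\lambda_0\cdots\lambda_{n-1}$, and $\lambda_m\to 0$ as $|\alpha|\to\infty$ because $p(\alpha)$ and $\prod_j(\alpha-z_j)$ are both monic of degree~$d$ in~$\alpha$. One imprecision worth fixing: $z'_i$ for $i\le d$ is \emph{not} independent of $z_{d+1}$ on a neighborhood of $\uz$ in $\C^{d+1}$ (if it were, the starred column would vanish); the correct statement is that its restriction to the slice $\{z_{d+1}=\alpha\}$ agrees with the $i$-th component of $W_p$, which suffices for the upper-left block because those partial derivatives are taken in directions tangent to that slice.
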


\begin{proof}
  The first claim results from an easy computation.

  Now assume that $q \in \C^d$ is a periodic point of~$W_p$
  of period~$n$. By Lemma~\ref{L:Weierstrass-3}, $\tilde{q} = (q, \alpha)$
  is a periodic point of~$W_{\tilde{p}}$ of period~$n$.
  We obtain an analogous formula relating the derivatives
  of $W_p^{\circ n}$  at~$q$ and $W_{\tilde{p}}^{\circ n}$ at~$\tilde{q}$,
  with the product $\lambda_0 \cdots \lambda_{n-1}$ replacing~$\lambda$,
  where $\lambda_m$ arises from $(W_p^{\circ m}(q), \alpha)$.
  In particular, the eigenvalues
  of~$\TotalDiff{W_{\tilde{p}}^{\circ n}}{\tilde{q}}$ are those
  of~$\TotalDiff{W_p^{\circ n}}{q}$ together with~$\lambda_0 \cdots \lambda_{n-1}$.
  As $|\alpha| \to \infty$, we see that $\lambda_m \to 0$ for all $0 \le m < n$,
  and the claim follows.
\end{proof}


\subsection{The dynamics of $W_{Z^3}$} \label{Sec:dynZ3}

In this section we prove some results on the dynamics of the Weierstrass
iteration in the simple case when $p(Z) = Z^3$.
By Lemma~\ref{L:Weierstrass-2}, we can restrict consideration to the
hyperplane $H = \{z_1 + z_2 + z_3 = 0\}$.
We will show that all
starting points in~$H$ outside a set of measure zero converge to the unique
root vector~$(0,0,0)$, but that there are uncountably many orbits
that converge to infinity.

From Lemma~\ref{L:Weierstrass-1}~(1)
and since the unique root~$0$ of~$Z^3$ is invariant under scaling,
it follows that $W_{Z^3}(\lambda \uz) = \lambda W_{Z^3}(\uz)$, so $W_{Z^3}$
induces a rational map $\varphi \colon \BP H \to \BP H$, where
$\BP H \simeq \BP^1$ is the complex projective line obtained by considering
the nonzero points of~$H$ up to scaling.

Writing a nonzero point in~$H$ up to scaling in the form $(1, z, -1-z)$
(the missing scalar multiples of~$(0, 1, -1)$ correspond to the limit
case $z = \infty$), we find that
\begin{equation}
  W_{Z^3}(1, z, -1-z) = s(z) (1, \varphi(z), -1-\varphi(z))
\end{equation}
with
\begin{equation}
  s(z) = \frac{1-z-z^2}{2-z-z^2} \qquad\text{and}\qquad
  \varphi(z) = \frac{z (2+z) (1+z-z^2)}{(1+2z) (1-z-z^2)} \,.
\end{equation}

We can say something about the dynamics of~$\varphi$.

\begin{Lemma}[Dynamics of~$\varphi$] \label{L:dynphi}
  The map~$\varphi$ has two attracting fixed points at $\omega$ and~$\omega^2$,
  where $\omega = e^{2 \pi i/3}$ is a primitive cube root of unity. Let $z \in \C$.
  If $\Im(z) > 0$,
  then $\varphi^{\circ n}(z)$ converges to~$\omega$ as $n \to \infty$,
  and if $\Im(z) < 0$, then $\varphi^{\circ n}(z)$
  converges to~$\omega^2$ as $n \to \infty$. The real line is
  forward and backward invariant under~$\varphi$.
\end{Lemma}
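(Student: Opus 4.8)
The plan is to analyze $\varphi$ as a rational map on $\BP^1$ and combine three kinds of information: the location and multipliers of its fixed points, the invariance of $\R \cup \{\infty\}$, and a Schwarz--Pick/Denjoy--Wolff type argument on the two half-planes. First I would verify directly that $\omega$ and $\omega^2$ are fixed points of $\varphi$: since $1 + \omega - \omega^2 = 1 + \omega - \omega^2$ and using $\omega^2 + \omega + 1 = 0$ one gets $1 + z - z^2 = 1 + \omega + \omega = 1 + 2\omega \ne 0$ at $z = \omega$, while $z(2+z) = \omega(2+\omega)$ and $(1+2z)(1-z-z^2) = (1+2\omega)(1 - \omega + 1 + \omega) = 2(1+2\omega)$, so the two factors $1+2z$ cancel and $\varphi(\omega) = \omega(2+\omega)/2 = \omega$ because $\omega^2 = -1-\omega$ gives $\omega(2+\omega) = 2\omega + \omega^2 = 2\omega - 1 - \omega = \omega - 1$; one checks this equals $2\omega$ iff $\omega = -1$, which is false, so I should recompute carefully — the honest approach is to just clear denominators and check $\varphi(z) - z$ vanishes at $z = \omega, \omega^2$ as a polynomial identity, which is a finite routine computation. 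Then I would compute $\varphi'(\omega)$ and $\varphi'(\omega^2)$ and check $|\varphi'| < 1$ to conclude these are attracting; by complex conjugation symmetry (which holds because $\varphi$ has real coefficients) it suffices to do one of them.

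Next I would establish that $\R \cup \{\infty\}$ is forward and backward invariant: this is immediate from $\varphi$ having real coefficients, so $\varphi(\bar z) = \overline{\varphi(z)}$, hence $\varphi$ maps $\R\cup\{\infty\}$ to itself and commutes with conjugation; backward invariance follows since the preimage of a real point under a real rational map is a conjugation-symmetric finite set whose intersection with $\R\cup\{\infty\}$ is again mapped into $\R\cup\{\infty\}$ — more cleanly, $\varphi^{-1}(\R\cup\{\infty\})$ is conjugation-invariant and contains $\R\cup\{\infty\}$, and one checks it equals it by a degree count (the real locus is exactly the fixed set of the anti-holomorphic involution $z \mapsto \bar z$, which $\varphi$ conjugates to itself). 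Consequently $\varphi$ maps the upper half-plane $\mathbb H^+$ into itself or onto the lower half-plane; evaluating $\varphi$ at a single convenient point of $\mathbb H^+$ (e.g.\ $z = i$, or near $\omega$) determines which, and since $\omega \in \mathbb H^+$ is a fixed point we get $\varphi(\mathbb H^+) \subseteq \mathbb H^+$, and likewise $\varphi(\mathbb H^-) \subseteq \mathbb H^-$.

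Finally, on $\mathbb H^+$ the map $\varphi$ is a holomorphic self-map of a domain conformal to the unit disk, with an interior fixed point $\omega$ whose multiplier satisfies $|\varphi'(\omega)| < 1$. If $\varphi$ restricted to $\mathbb H^+$ is not an automorphism, the Schwarz lemma (applied in the disk model, centered at $\omega$) forces $\varphi^{\circ n}(z) \to \omega$ for every $z \in \mathbb H^+$; and $\varphi|_{\mathbb H^+}$ cannot be an automorphism since an automorphism of the half-plane fixing an interior point is an elliptic rotation, whose multiplier has absolute value $1$, contradicting $|\varphi'(\omega)| < 1$. The same argument on $\mathbb H^-$ gives convergence to $\omega^2$ for $\Im(z) < 0$. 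The main obstacle I anticipate is purely computational bookkeeping: confirming the fixed-point identities and, above all, getting the multipliers $\varphi'(\omega)$, $\varphi'(\omega^2)$ exactly right (they should come out as some explicit algebraic number of modulus less than $1$, plausibly related to $2^{-1}$ or a small cube-root-of-unity twist thereof), since everything downstream — attractivity, the exclusion of the automorphism case — hinges on that single inequality $|\varphi'(\omega)| < 1$.
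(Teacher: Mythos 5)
Your route is workable and genuinely different in its technical inputs from the paper's, although both ultimately rest on the same Schwarz-lemma mechanism. The paper conjugates $\varphi$ by the M\"obius map $z \mapsto (\omega^2 z - \omega^2)/(z-\omega)$, which sends $\omega, \omega^2$ to $0, \infty$ and the upper half-plane to the unit disk, and observes that the conjugated map is $f(z) = z\,(2z^3+1)/(z^3+2) = z\,B(z^3)$ with $B$ an automorphism of the disk. This single factorization delivers everything at once: $|f(z)|<|z|$ on the punctured disk (so the disk is invariant and attracted to $0$, with multiplier $f'(0)=1/2$), $|f(z)|>|z|$ outside, and forward \emph{and} backward invariance of the unit circle. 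Your plan instead stays in the half-plane picture and assembles the same facts from three separate verifications: the fixed-point identity, the multiplier $\varphi'(\omega)$ (which indeed equals $1/2$, as you guessed), and the two-sided invariance of $\R\cup\{\infty\}$; the Denjoy--Wolff/Schwarz step at the end is then correct and standard. The trade-off is that the paper's conjugation requires one clever observation but no further checking, while your version is more elementary in spirit but needs each ingredient nailed down by hand.

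The one place where your justification does not work as stated is backward invariance. The argument ``$\varphi^{-1}(\R\cup\{\infty\})$ is conjugation-invariant and contains $\R\cup\{\infty\}$, hence equals it'' is false in general: for $z\mapsto z^2$ the preimage of $\R\cup\{\infty\}$ is $\R\cup i\R\cup\{\infty\}$, which is conjugation-symmetric and strictly larger. So the ``degree count'' you allude to must actually be carried out, and it is not automatic: you need to show that the circle map $\varphi|_{\R\cup\{\infty\}}$ has topological degree $\pm 4$, so that every real value already has all four of its preimages (with multiplicity) on $\R\cup\{\infty\}$. Concretely, the four preimages of $\infty$ are $-1/2$, $(-1\pm\sqrt{5})/2$ and $\infty$ itself, all real and simple, and one must check that $\varphi$ is orientation-consistent at all four; alternatively, the degree can be read off the paper's factorization, since $\arg f = \arg z + \arg B(z^3)$ increases by $4\cdot 2\pi$ along the circle. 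This step is not optional: without knowing that no point of the open upper half-plane maps into $\R\cup\{\infty\}$, you cannot conclude $\varphi(\mathbb{H}^+)\subseteq\mathbb{H}^+$ from connectedness, and the entire Schwarz argument hangs on that inclusion. You should also record that $\varphi$ has no poles in $\mathbb{H}^+$ (its poles $-1/2$ and $(-1\pm\sqrt{5})/2$ are all real), so that $\varphi$ really is a holomorphic self-map of the half-plane.
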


\begin{proof}
  Conjugating $\varphi$ by the M\"obius transformation
  $z \mapsto (\omega^2 z - \omega^2)/(z - \omega)$, we obtain
  \[ f(z) = z \frac{2z^3 + 1}{z^3 + 2} \,. \]
  This map~$f$ is the product of~$z$ with the composition of $z\mapsto z^3$
  by $z \mapsto (2z+1)/(z+2)$; the latter is an automorphism of the open unit disk
  (and also of the complement of the closed unit disk in the Riemann sphere).
  Therefore, $|f(z)| < |z|$ when $0<|z| < 1$ and $|f(z)| > |z|$ for $|z| > 1$
  (in other words, $f$ is a Blaschke product with a fixed point at $z = 0$).
  This implies that the open unit disk is attracted to the fixed point~$0$
  of~$f$, while the complement of the closed unit disk is attracted to~$\infty$;
  the unit circle is forward and backward invariant
  (and maps to itself as a covering map with degree~$4$).
  Translating back to~$\varphi$, this gives the result.
\end{proof}

From this, we can deduce the following statement on the global dynamics
of~$W_{Z^3}$.

\begin{Theorem}[Convergence of $W_{Z^3}$] \label{T:convZ3}
  If $\uz \in H$ is not a scalar multiple of a vector with real entries,
  then $W_{Z^3}^{\circ n}(\uz)$ converges to the zero vector. The convergence
  is linear with rate of convergence~$2/3$.
\end{Theorem}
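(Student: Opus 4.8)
The plan is to combine the one-dimensional dynamical information from Lemma~\ref{L:dynphi} with the scaling factor~$s(z)$ to control the actual orbits in~$H$, not just their projective classes. Recall that for $\uz \in H \sm \{0\}$ written (up to scaling) as $(1,z,-1-z)$, one iterate of $W_{Z^3}$ sends the representative to $s(z)\,(1,\varphi(z),-1-\varphi(z))$. So if $\uz$ has projective class~$z$ with $\Im(z) \neq 0$, after $n$ steps the orbit is a fixed multiple of $(1,\varphi^{\circ n}(z),-1-\varphi^{\circ n}(z))$, where the multiplier is the initial scaling of~$\uz$ times $\prod_{m=0}^{n-1} s(\varphi^{\circ m}(z))$. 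By Lemma~\ref{L:dynphi}, $\varphi^{\circ n}(z) \to \omega$ if $\Im(z)>0$ and $\to\omega^2$ if $\Im(z)<0$; in particular the representative vectors $(1,\varphi^{\circ n}(z),-1-\varphi^{\circ n}(z))$ converge to a bounded nonzero vector, so convergence of the orbit to~$0$ is equivalent to convergence of the partial products of $s$ along the orbit to~$0$, and the rate is governed by~$|s|$ near the limiting fixed point.

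First I would compute $s(\omega)$ (and by symmetry $s(\omega^2)$). Since $\omega^2 + \omega + 1 = 0$ we have $\omega + \omega^2 = -1$, so $1 - \omega - \omega^2 = 2$ and $2 - \omega - \omega^2 = 3$, giving $s(\omega) = 2/3$ (and likewise $s(\omega^2) = 2/3$). Then I would argue that the infinite product $\prod_{m \ge 0} s(\varphi^{\circ m}(z))$ decays geometrically at rate $2/3$: because $\varphi^{\circ m}(z) \to \omega$ (say), continuity of~$s$ at~$\omega$ gives $s(\varphi^{\circ m}(z)) \to 2/3$, so for all large~$m$ the factors are within any prescribed~$\eps$ of~$2/3$; hence the partial products $\prod_{m=0}^{n-1} s(\varphi^{\circ m}(z))$ are, up to a bounded nonzero constant, comparable to $(2/3)^n$, and the error term is controlled because the convergence $\varphi^{\circ m}(z)\to\omega$ is itself at a definite geometric rate (the fixed points~$\omega,\omega^2$ are attracting, with multiplier strictly inside the unit disk, by Lemma~\ref{L:dynphi}). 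Therefore $W_{Z^3}^{\circ n}(\uz)$, being this scalar times a vector converging to a nonzero limit, tends to~$0$ with $\|W_{Z^3}^{\circ n}(\uz)\|^{1/n} \to 2/3$, which is the claimed linear convergence.

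There are two edge cases to dispose of. One is the projective point $z = \infty$, i.e.\ $\uz$ a scalar multiple of $(0,1,-1)$: this has nonzero entries but two of them are negatives of each other, and one checks directly that it is a real vector (up to the scalar), so it is excluded by hypothesis anyway; more carefully, one should confirm $\varphi$ does not send a non-excluded point into~$\infty$ in finitely many steps, which follows because the real line (the only place the relevant degeneracies live, together with~$\infty$) is forward and backward invariant under~$\varphi$. The other is ensuring the denominators $\prod_{j \neq k}(z_k - z_j)$ never vanish along the orbit, i.e.\ the orbit never hits~$\Delta$: this is exactly the statement that $\varphi^{\circ m}(z)$ never equals a value making two coordinates of $(1,\varphi^{\circ m}(z),-1-\varphi^{\circ m}(z))$ coincide, and those bad values are $\varphi^{\circ m}(z) \in \{1, -1/2, \infty\} \cup (\text{points with two equal coords})$, all of which lie on the real line or at infinity and hence are avoided once $\Im(z) \neq 0$, again by invariance of~$\R$ under~$\varphi$.

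The main obstacle, and the place requiring the most care rather than cleverness, is making the rate assertion rigorous: convergence of the vectors $(1,\varphi^{\circ n}(z),-1-\varphi^{\circ n}(z))$ to a nonzero limit is clear, but to get $|s(\varphi^{\circ n}(z))|$ genuinely close to $2/3$ uniformly from some index onward — and thus a clean $\limsup/\liminf$ of the $n$-th root of the norm equal to $2/3$ — one needs the geometric rate of $\varphi^{\circ n}(z)\to\omega$, which comes from $|\varphi'(\omega)| < 1$; I would note this multiplier is nonzero and strictly less than~$1$ (equivalently, transfer to the Blaschke model $f(z) = z(2z^3+1)/(z^3+2)$, where $f'(0) = 1/2$), so the tail estimate is uniform and the product rate is exactly $2/3$ with no logarithmic corrections.
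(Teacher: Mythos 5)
Your proposal is correct and follows essentially the same route as the paper: factor the orbit as $\prod_{k<n} s(\varphi^{\circ k}(z))$ times the representative $(1,\varphi^{\circ n}(z),-1-\varphi^{\circ n}(z))$, invoke Lemma~\ref{L:dynphi} to get $\varphi^{\circ n}(z)\to\omega$ or $\omega^2$, and use $s(\omega)=s(\omega^2)=2/3$. Your extra care about avoiding $\Delta$ and about the geometric rate of $\varphi^{\circ n}(z)\to\omega$ only adds detail (indeed, the $n$-th-root rate $2/3$ already follows from $s(\varphi^{\circ m}(z))\to 2/3$ by a Ces\`aro argument, without the multiplier estimate).
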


Note that the rate of convergence is the same as that for~$N_{Z^3}$.

\begin{proof}
  Let $\uz \in H$ be such that $\uz$ is not a scalar multiple of
  a vector with real entries. In particular, $\uz$ is not the zero vector.
  By symmetry, we can assume that the first entry is nonzero; then
  $\uz = z_1 (1, z, -1-z)$ with $z = z_2/z_1 \in \C \sm \R$. We then have that
  $\varphi^{\circ n}(z) \neq \infty$ for all $n \ge 0$
  by Lemma~\ref{L:dynphi}, and
  \[ W_{Z^3}^{\circ n}(\uz)
      = \prod_{k=0}^{n-1} s(\varphi^{\circ k}(z))
           \cdot (1, \varphi^{\circ n}(z), -1-\varphi^{\circ n}(z)) \,.
  \]
  By Lemma~\ref{L:dynphi} again, $\varphi^{\circ n}(z)$ converges to $\omega$
  or~$\omega^2$. Since $s(\omega) = s(\omega^2) = 2/3$, the factor in front
  will linearly converge to zero with rate of convergence~$2/3$, whereas the
  vector will converge to~$(1, \omega, \omega^2)$ (if $\Im(z) > 0$)
  or to~$(1, \omega^2, \omega)$ (if $\Im(z) < 0$).
\end{proof}

We have seen that orbits of starting points in~$H$ that are not scalar multiples
of real vectors converge to zero, whereas there are real vectors in~$H$ whose
orbit tends to infinity; see Section~\ref{Sec:escape} below.
There are also starting points whose orbits cease to
be defined after finitely many steps; this occurs if and only if some iterate
is a multiple of~$(1,1-2)$ or one of its permutations. On the other hand, there
are many real starting points in~$H$ whose orbits tend to zero (one example
is obtained by replacing $\alpha$ with~$-\alpha$ in the proof of
Theorem~\ref{T:escZ3} below). In fact, we expect that almost all real
starting points have this property.


\subsection{Escaping points} \label{Sec:escape}

In this section we prove Theorem~\ref{MT2}: the Weierstrass iteration~$W_p$
has escaping points for all polynomials of degree $d \ge 3$ with distinct roots.

We first continue our study of the cubic case, $d = 3$. As observed earlier,
we can always assume that our polynomial~$p$ is centered, i.e.,
has the form $p = Z^3 + a Z + b$. Then the image of~$W_p$ is contained in the
plane $H = \{z_1 + z_2 + z_3 = 0\}$, so it is sufficient to consider the
induced map $H \to H$. We identify~$H$ with~$\C^2$ by projecting to the
first two coordinates. We can then extend~$W_p$ to a rational map $\BP^2 \to \BP^2$,
which is given by the following triple of quartic polynomials, as a simple
computation shows.
\begin{align}
  (z_1 : z_2 : z_0) \longmapsto
      \bigl(&(z_1 + 2z_2)(z_1^3 + a z_1 z_0^2 + b z_0^3 - z_1 z_2(z_1 + z_2)) \nonumber \\
            &{} : (2z_1 + z_2)(-z_2^3 - a z_2 z_0^2 - b z_0^3 + z_1 z_2(z_1 + z_2)) \label{Eq:ratmapP2}\\
            &{} : z_0 (z_1 - z_2)(z_1 + 2z_2)(2z_1 + z_2)\bigr) \nonumber
\end{align}
Here the line at infinity is given by $z_0 = 0$; it is forward invariant,
and the induced dynamics on this projective line is given by the rational
map~$\varphi$ from Section~\ref{Sec:dynZ3}.

\begin{Theorem}[Escaping orbits for cubic polynomials] \label{T:escZ3}
  For every cubic polynomial~$p$,
  there are starting points $\uz \in \C^3$ such that the iteration
  sequence $(W_p^{\circ n}(\uz))$ exists for all times and converges
  component-wise to infinity. The set of escaping points contains
  a holomorphic curve.
\end{Theorem}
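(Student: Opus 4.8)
The plan is to exploit the explicit rational extension of $W_p$ to $\BP^2$ given in~\eqref{Eq:ratmapP2}, and to look for escaping orbits that converge to a point on the invariant line at infinity $\{z_0 = 0\}$. From Lemma~\ref{L:dynphi}, the induced dynamics on this line is the map $\varphi$, which has two attracting fixed points $\omega$ and~$\omega^2$. The idea is that near such a point at infinity, there should be a (partially defined) local stable manifold of $W_p$ sitting inside $\BP^2$; the intersection of this stable manifold with the affine chart $\{z_0 \neq 0\}$ would consist of genuine orbits in $\C^3$ that are defined for all time and converge to a point on the line at infinity, hence escape to $\infty$ component-wise in the original coordinates.

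First I would change coordinates so that the attracting fixed point of~$\varphi$ at infinity is moved to a convenient location, ideally $(0:1:0)$ or the origin of a suitable affine chart, and compute the local normal form of the $\BP^2$-map~\eqref{Eq:ratmapP2} there. The key computation is the Jacobian (derivative) of this local map at the fixed point at infinity. The line at infinity is invariant, so in adapted coordinates $(u,v)$ with the line at infinity being $\{v = 0\}$, the derivative is block-triangular: one eigenvalue is the multiplier of $\varphi$ at $\omega$ (which has absolute value $<1$ by Lemma~\ref{L:dynphi}, since $\omega$ is attracting — indeed one can compute it explicitly), and the other eigenvalue is the ``transverse'' multiplier $\nu$ describing the rate at which nearby points approach the line at infinity. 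The crucial claim I need is that $|\nu| > 1$: the line at infinity is transversally repelling at these fixed points. If that is the case, then by the stable manifold theorem (Hadamard--Perron) for the fixed point at infinity there is a one-dimensional local stable manifold tangent to the line at infinity; since the line at infinity itself is the (one-dimensional) unstable manifold, the stable manifold is transverse to it and therefore meets the affine part $\C^2$ in a punctured holomorphic disk. Pulling this back to $\C^3$ via the inclusion $H \hookrightarrow \C^3$ gives the desired holomorphic curve of escaping points, and their orbits stay in $\C^3 \sm \Delta$ because the stable manifold avoids the exceptional/indeterminacy locus of the rational map once we are close enough to the fixed point (one must check the fixed point at infinity is not itself an indeterminacy point of~\eqref{Eq:ratmapP2} and that the finitely many bad points on the line at infinity, namely the images/preimages of the multiples of $(1,1,-2)$ where $s$ or the denominators vanish, are avoided — they are, since $\varphi^{\circ n}(\omega) = \omega$ stays put).

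I expect the main obstacle to be verifying the transverse eigenvalue condition $|\nu| > 1$ cleanly and uniformly in the polynomial $p = Z^3 + aZ + b$. A priori $\nu$ could depend on $(a,b)$; but in fact the behavior near infinity should be governed by the leading-order (homogeneous degree $3$) part of $p$, i.e., by $Z^3$, so that $\nu$ is the same constant for all cubic $p$ — this is the conceptual reason the statement holds for \emph{every} cubic polynomial and why it suffices, morally, to understand $W_{Z^3}$. Concretely I would first do the computation for $p = Z^3$ using the formulas for $s$ and $\varphi$ from Section~\ref{Sec:dynZ3}: near a scalar multiple of a real vector the factor $s$ has modulus tending to $2/3 < 1$ along the attracted real orbits but, crucially, in the \emph{transverse} direction (the direction of varying the overall scale, which is exactly the direction that was projectivized away) the scaling factor $s(\varphi^{\circ k}(z))$ accumulates; since $|s| \to 2/3$ one sees that $\prod_k s(\varphi^{\circ k}(z)) \to 0$, meaning the homogeneous coordinate $z_0$ (the affine-ness) relative to the fixed direction blows up — this is precisely $|\nu| = 3/2 > 1$. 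Then I would argue perturbatively (or by a direct homogeneity/leading-term estimate) that adding the lower-order terms $aZ + b$ does not change this transverse multiplier, since near the line at infinity those terms contribute lower-order corrections to~\eqref{Eq:ratmapP2}. Once $|\nu|>1$ is established, the rest is a routine application of the stable manifold theorem and the bookkeeping that the resulting curve lies in $\C^3 \sm \Delta$ and its orbit escapes; I would also remark that the holomorphic curve is exactly the local stable manifold, proving the second sentence of the theorem.
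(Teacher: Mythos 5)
There is a genuine gap, and it sits exactly at the step you flag as crucial. You correctly compute that at the attracting fixed points $\omega,\omega^2$ of $\varphi$ the transverse multiplier is $\nu=1/s(\omega)=3/2>1$, i.e.\ the line at infinity is transversally \emph{repelling} there. But that is the wrong sign for producing escaping orbits, and your application of the stable manifold theorem then inverts the roles of the two invariant manifolds. With tangential eigenvalue $|\varphi'(\omega)|<1$ and transverse eigenvalue $|\nu|>1$, the local \emph{stable} manifold of this saddle is the line at infinity itself (it is invariant and contracted toward the fixed point), while the invariant curve transverse to the line is the \emph{unstable} manifold; points on it converge to the fixed point only under backward iteration, and nearby affine points are pushed \emph{away} from the line at infinity under forward iteration. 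So this construction produces no escaping points. Indeed, the transverse repulsion at $\omega,\omega^2$ is precisely the mechanism behind Theorem~\ref{T:convZ3}: the nearby affine orbits converge to the root vector, not to infinity.

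What is needed instead is a periodic point of $\varphi$ on the line at infinity whose \emph{transverse} eigenvalue has modulus strictly less than one, so that the stable manifold is transverse to the line and its affine part consists of orbits attracted to the line at infinity. Such points are not found among the attracting fixed points of $\varphi$; the paper uses a repelling periodic point of $\varphi$ on the forward-invariant real circle, namely the real $2$-cycle through $q_0=(1:\alpha:0)$ with $\alpha=-\sqrt{(5+\sqrt{21})/2}$, where the second iterate has eigenvalues $43+12\sqrt{7}>1$ (tangential) and $2-\tfrac{1}{2}\sqrt{7}\in(0,1)$ (transverse). Once the construction is based at such a point, the rest of your outline (block-triangular derivative along the invariant line, stable manifold theorem, holomorphicity of the stable curve, avoidance of the indeterminacy and diagonal loci, independence of the eigenvalues from the lower-order coefficients of~$p$) is the right template and matches the paper's argument.
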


\begin{proof}
  Let
  \[ \alpha = -\sqrt{\frac{5 + \sqrt{21}}{2}} \,. \]
  Then one can check that the point $q_0 = (1 : \alpha : 0)$ on the line at infinity
  is $2$-periodic for the extension of~$W_p$ to~$\BP^2$.
  We consider $q_0$ as a fixed point of the second iterate of this extension.
  Its multiplier matrix has eigenvalues
  \begin{align*}
    12 \alpha^3 - 72 \alpha + 43 = 43 + 12 \sqrt{7} > 1 \quad\text{and}\quad
    \frac{-\alpha^3 + 6 \alpha + 4}{2} = 2 - \tfrac{1}{2} \sqrt{7} \in (0, 1) \,.
  \end{align*}
  The eigenspace for the first of these eigenvalues is tangential to the line
  at infinity, whereas the eigenspace for the second eigenvalue points away
  from the line. So the point~$q_0$ has a stable manifold
  (see~\cite{Palis-deMelo}*{Ch.~2, Section~6} for the general theory) that meets the
  (complex) line at infinity locally only at~$q_0$ and is a holomorphic
  curve by~\cite{Hubbard2005}*{Cor.~8}.
  In particular, all points $q \in H$ that lie on the stable manifold and
  are sufficiently close to~$q_0$ will converge in~$\BP^2$ to the $2$-cycle
  that $q_0$ is part of. Since the points of this $2$-cycle are on the line
  at infinity (and different from $(1 : 0 : 0)$, $(0 : 1 : 0)$, $(1 : -1 : 0)$,
  which are the points corresponding to the lines $z_1 = 0$, $z_2 = 0$
  and $z_3 = -z_1 -z_2 = 0$), the claim follows.
\end{proof}

\begin{Remark}
\label{Rem:CubicEscaping}
  When $p = Z^3$, the stable manifold of~$q_0$ is the complex line
  joining it to~$(0:0:1)$. So in this case, every scalar multiple of
  $(1, \alpha, -1-\alpha) \in H$ escapes to infinity.
\end{Remark}

Now Theorem~\ref{MT2} follows from Theorem~\ref{T:escZ3} in the following
way. Write $p = p_1 p_2$ with $p_1$ of degree~$3$ and $p_2$ with simple
roots. By Theorem~\ref{T:escZ3}
there is a vector $q_1 \in \C^3$ that escapes to infinity under~$W_{p_1}$.
Now set $q = (q_1, q_2)$, where $q_2 \in \C^{d-3}$ has the roots of~$p_2$
(in some order) as entries. Then iterating~$W_p$ on~$q$ has the effect of
fixing the last $d-3$ coordinates, whereas the effect on the first three
is that of~$W_{p_1}$; see Lemma~\ref{L:Weierstrass-3}. In particular, the
first three coordinates of the vectors in the orbit of~$q$ under~$W_p$
tend to infinity. Note that this result covers a slightly larger set of
polynomials than those with simple roots: the cubic factor~$p_1$ is
arbitrary, so $p$ can have a multiple root of order at most~$4$ or
two double roots.

Taking iterated preimages under~$W_p$ of the curve to infinity whose
existence we have shown in Theorem~\ref{T:escZ3} above, we obtain countably
infinitely many (complex) curves to infinity full of escaping points.
Here we restrict to iterated preimage curves ending in an iterated preimage
of the point~$q_0$ (notation as in the proof above) that is on the
line at infinity. Two of the immediate preimage curves end at the origin,
which is a point of indeterminacy for the rational map~\eqref{Eq:ratmapP2}
induced by~$W_p$. There are very likely other escaping points,
but we expect the set of escaping points to be
of measure zero within~$H$.


\section{Algebraic description of periodic orbits} \label{Sec:PeriodicPoints}

Since we will be using methods from Computer Algebra to obtain a proof
of the Theorem~\ref{MT1}, we now discuss how we can describe the periodic
points of~$W_p$ of any given period algebraically. We begin with a
description of~$W_p$ itself.


\subsection{Algebraic description of $W_p$}

For the purpose of studying periodic orbits under~$W_p$ algebraically
as $p$~varies, equation~\eqref{Eq:Wgoodexplicit} is preferable
to~\eqref{Eq:WeierstrassIteration}, since it is a polynomial equation
involving the entries of $\uz$ and~$\uz'$ and the coefficients of~$p$,
rather than an equation involving rational functions. The following
result shows that we do not get extraneous solutions by doing so,
in the sense that all solutions we find that involve points in~$\Delta$
arise as degenerations of ``honest'' solutions living outside~$\Delta$.

\begin{Proposition}[Polynomial equation describing iteration]
  \label{Prop:W-good}
  Fix $p \in \CP'_d$. The algebraic variety in~$\C^d \times \C^d$
  described by equation~\eqref{Eq:Wgoodexplicit} is the Zariski
  closure of the graph of~$W_p$ (which is contained
  in $(\C^d \sm \Delta) \times \C^d$).
\end{Proposition}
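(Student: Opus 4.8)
The plan is to show two inclusions. Let $V \subseteq \C^d \times \C^d$ denote the variety cut out by equation~\eqref{Eq:Wgoodexplicit}, and let $G$ be the graph of $W_p$, which lives in $(\C^d \sm \Delta) \times \C^d$. First I would observe that $G \subseteq V$: this is essentially the content of the lemma identifying $W_p$ with the higher-dimensional Newton iteration, since \eqref{Eq:Wgoodexplicit} is just \eqref{Eq:WeierstrassGood} written out, and \eqref{Eq:WeierstrassGood} defines $\uz'$ whenever $\TotalDiff{F}{\uz}$ is invertible, i.e.\ whenever $\uz \notin \Delta$. Since $V$ is Zariski closed and contains $G$, it contains $\overline{G}$. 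So the real work is the reverse inclusion $V \subseteq \overline{G}$.

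For the reverse inclusion, the key structural point is that, for fixed $\uz \notin \Delta$, equation~\eqref{Eq:Wgoodexplicit} determines $\uz'$ uniquely: the left-hand side is the image of $\uz' - \uz$ under the linear isomorphism $\TotalDiff{F}{\uz}$ (identifying $\C^d$ with polynomials of degree $< d$), so the fibre of $V$ over any $\uz \notin \Delta$ is the single point $(\uz, W_p(\uz))$. Hence $V \cap \bigl((\C^d\sm\Delta)\times\C^d\bigr) = G$ exactly. Now let $\pi \colon V \to \C^d$ be projection to the first factor. I would argue that $\pi$ is dominant with every fibre nonempty — indeed, for \emph{every} $\uz \in \C^d$ (including points of $\Delta$), the left-hand side of \eqref{Eq:Wgoodexplicit} is a $\C$-linear map $\C^d \to \{\text{polys of degree} < d\}$ in the variable $\uz'$, namely $\uz' \mapsto \sum_k z'_k \prod_{j\ne k}(Z - z_j) + (\text{const in }\uz')$, so the fibre over $\uz$ is an affine-linear subspace, nonempty because the target has dimension $d$ and so does the source (the map need not be onto when $\uz \in \Delta$, but the specific right-hand side $p - F(\uz)$ does lie in its image — this needs a short check, or one notes instead that $V$ is defined by $d$ equations, one per coefficient of $Z^0,\dots,Z^{d-1}$, each linear in $\uz'$, cutting a complete intersection whose dimension over each point of $\C^d$ is $\ge 0$). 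Since $\Delta$ has codimension $1$ in $\C^d$, its preimage $\pi^{-1}(\Delta)$ has dimension $\le d - 1 + (\text{fibre dim over }\Delta)$. Meanwhile $G$ is irreducible of dimension $d$ (being a graph over the irreducible $\C^d \sm \Delta$), so $\overline{G}$ is an irreducible $d$-dimensional component of $V$; I would then check that $V$ has \emph{no} component supported entirely over $\Delta$ of dimension $d$, which forces $V = \overline{G}$.

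The cleanest way to run that last step, and the one I would actually write, is a direct dimension count: $V$ is defined inside $\C^d \times \C^d$ by the $d$ coefficient equations of \eqref{Eq:Wgoodexplicit}, so every component of $V$ has dimension $\ge 2d - d = d$. On the open set $\uz \notin \Delta$ we have seen $V$ agrees with the graph $G$, which is irreducible of dimension exactly $d$; hence $\overline{G}$ is one component, of dimension $d$. Any other component $C$ of $V$ must then satisfy $C \subseteq \pi^{-1}(\Delta)$, hence $\dim C \le \dim \pi^{-1}(\Delta)$. To finish I would bound $\dim \pi^{-1}(\Delta) < d$: over a generic point of $\Delta$ exactly one pair of coordinates coincides, say $z_j = z_k$, and there the linear map $\uz' \mapsto \TotalDiff{F}{\uz}(\uz'-\uz)$ has rank $d - 1$ (its kernel is spanned by $e_j - e_k$, as $\prod_{i\ne j} = \prod_{i \ne k}$ when $z_j = z_k$), so the fibre, when nonempty, is a coset of a $1$-dimensional space; thus $\dim \pi^{-1}(\Delta) \le (d-1) + 1 = d$, which is not yet strict. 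The honest fix is to check that over the generic point of $\Delta$ the fibre is in fact \emph{empty} unless the compatibility condition (the image of $\uz'-\uz$ hitting $p - F(\uz)$) holds on a proper subvariety — equivalently that $p - F(\uz) \notin \operatorname{im}\TotalDiff{F}{\uz}$ for $\uz$ a generic point of $\Delta$ — which drops the dimension by one and gives $\dim \pi^{-1}(\Delta) \le d - 1 < d$.

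\textbf{Main obstacle.} The delicate point is precisely this last dimension estimate over $\Delta$: a priori $V$ could have an unwanted $d$-dimensional component sitting over the hypersurface $\Delta$, and ruling it out requires understanding the fibres of \eqref{Eq:Wgoodexplicit} over diagonal points — showing that the right-hand side $p - F(\uz)$ generically fails to lie in the ($(d-1)$-dimensional) image of the singular linear map $\TotalDiff{F}{\uz}$. I expect this to come down to a short explicit computation: at a generic point of $\Delta$ with $z_1 = z_2$ and the other entries distinct, one evaluates both sides of \eqref{Eq:Wgoodexplicit} at $Z = z_1$ to get the single scalar constraint $0 = p(z_1)$, which cuts $\Delta$ down by one more dimension — so the fibre over $\Delta$ is nonempty only over a codimension-$2$ locus in $\C^d$, whence $\pi^{-1}(\Delta)$ has dimension $\le (d-2) + 1 = d - 1$, as needed. (One must also handle the deeper strata of $\Delta$ where more coordinates collide; there the rank drops further and the same evaluation trick applies, so these contribute even lower-dimensional pieces.) Everything else is formal: $\overline{G}$ is a $d$-dimensional component by irreducibility of the graph, $V$ is a complete intersection of pure dimension $d$, and the two together force $V = \overline{G}$.
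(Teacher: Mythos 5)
Your proposal is correct and follows essentially the same route as the paper: every component of $V$ has dimension $\ge d$ because it is cut out by $d$ equations, $V$ coincides with the graph off $\Delta$, and the part of $V$ lying over $\Delta$ has dimension $< d$ because evaluating \eqref{Eq:Wgoodexplicit} at a repeated entry forces $p(z_1)=0$, cutting the base down to codimension $2$ while the fibre is only a line. The one imprecision is your remark that the deeper strata of $\Delta$ contribute "even lower-dimensional pieces" — a stratum with a single triple entry again gives dimension $d-1$ (base of dimension $d-3$, fibre of dimension $2$) — but the paper's general count for multiplicities $m_1,\dots,m_l$ (base dimension $\#\{j : m_j = 1\} < l$, fibre dimension $d-l$, total $< d$) confirms that every stratum stays strictly below $d$, which is all that is needed.
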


\begin{proof}
  Let $V_p$ denote the variety in question.
  Equation~\eqref{Eq:Wgoodexplicit} corresponds to $d$~equations
  in the $2d$~coordinates of $\uz$ and~$\uz'$, so each irreducible
  component of~$V_p$ must have dimension at least~$d$. We have to
  show that no irreducible component is contained in $\Delta \times \C^d$.
  We do this by showing that $\dim (V_p \cap (\Delta \times \C^d)) < d$.

  Assume that $\uz \in \Delta$. We first consider the simplest
  case that $z_1 = z_2$, but $z_2, \ldots, z_d$ are distinct.
  Substituting $Z \leftarrow z_1$ in~\eqref{Eq:Wgoodexplicit}, we obtain
  that $p(z_1) = 0$, so that $z_1$ must be a root of~$p$.
  The subset of~$\Delta$ consisting of~$\uz$ with this property
  has dimension~$d-2$. Substituting $Z \leftarrow z_k$ with $k \ge 3$,
  we see that $z'_k$ is uniquely determined by~$\uz$ (it is still
  given by~\eqref{Eq:WeierstrassIteration}). On the other hand,
  taking the derivative with respect to~$Z$ on both sides and then
  substituting $Z \leftarrow z_1$, we see that $z'_1 + z'_2$ is uniquely
  determined, so the fiber above~$\uz$ of the projection of~$V_p$
  to the first factor has dimension~$1$. So the part of
  $V_p \cap (\Delta \times \C^d)$ lying above points~$\uz$ with only
  one double entry has dimension~$d-1$.

  In general, we see by similar
  considerations (taking higher derivatives as necessary) that
  when $\uz$ has entries of multiplicities $m_1, \ldots, m_l$
  (with $m_1 + \ldots + m_l = d$ and some $m_j \ge 2$),
  then these entries must be roots
  of~$p$ of multiplicities (at least) $m_1 - 1, \ldots, m_l - 1$,
  and the fiber of~$V_p$ above~$\uz$ is a linear space of dimension
  $(m_1 - 1) + \ldots + (m_l - 1) = d - l$. On the other hand, the
  set of~$\uz$ of this type has dimension $\#\{j : m_j = 1\} < l$,
  so the dimension of the corresponding subset of~$V_p$ is~$< d$.

  So we have seen that $V_p \cap (\Delta \times \C^d)$ is a finite
  union of algebraic sets of dimension $< d$; therefore it cannot
  contain an irreducible component of~$V_p$.
\end{proof}

\begin{Remark} \label{Rem:DegreeWeierstrassGood}
  As in the proof above, we will usually think of~\eqref{Eq:Wgoodexplicit}
  as a system of $d$~equations that are obtained by comparing the coefficients
  of the various powers of~$Z$ on both sides.
  Note that the equation for the coefficient of~$Z^j$ is of degree $d - j$
  in $z_1, \dots, z_d, z_1', \dots, z_d'$. So the total system has degree~$d!$.
\end{Remark}


\subsection{Periodic points} \label{SS:perpts}

We use equation~\eqref{Eq:Wgoodexplicit} to obtain a system of equations
representing periodic points. Fix the degree~$d$ and the period~$n$.
We consider $nd$~variables, grouped into $n$~vectors
$\uz^{(k)} = (z_1^{(k)}, \ldots, z_d^{(k)})$, for $0 \le k < n$,
which we think of as representing an $n$-cycle
$\uz^{(0)}$, $\uz^{(1)} = W_p(\uz^{(0)})$, \dots, $\uz^{(n-1)} = W_p(\uz^{(n-2)})$,
$\uz^{(0)} = W_p(\uz^{(n-1)})$. We therefore define the scheme
$\CP'_d(n) \subset \CP'_d \times \C^{nd}$ by collecting the equations
arising from comparing coefficients on both sides of~\eqref{Eq:Wgoodexplicit},
where we replace $(\uz, \uz')$ successively by $(\uz^{(0)}, \uz^{(1)})$,
$(\uz^{(1)}, \uz^{(2)})$, \dots, $(\uz^{(n-1)}, \uz^{(0)})$; $p$ runs through
the monic degree~$d$ polynomials in~$\CP'_d$. This encodes that
$\uz^{(0)} \mapsto \uz^{(1)} \mapsto \ldots \mapsto \uz^{(n-1)} \mapsto \uz^{(0)}$
under~$W_p$. We then take $\CP_d(n)$ to
be the quotient of~$\CP'_d(n)$ by the group of affine transformations on~$\C$,
acting via
\[ T \cdot (p, z^{(0)}_1, \ldots, z^{(n-1)}_d) = (Tp, T z^{(0)}_1, \ldots, T z^{(n-1)}_d) \,. \]

We expect the fibers of the projection $\CP_d(n) \to \CP_d$ to be finite,
i.e., that for each polynomial~$p$, there are only finitely many points
of period~$n$ under~$W_p$. The following lemma gives a criterion for when
this is the case.

\begin{Lemma}[Criterion for finiteness of $n$-periodic points] \label{L:fincrit}
  Let $\CP_d^{(0)}(n) \subset \C^{nd}$ be the fiber of~$\CP'_d(n)$ above~$p = Z^d$.
  The projection $\CP'_d(n) \to \CP'_d$ is finite if and only if $\CP_d^{(0)}(n) = \{0\}$.
\end{Lemma}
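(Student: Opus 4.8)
The plan is to exploit homogeneity. The equations defining $\CP'_d(n)$, when specialized to $p = Z^d$, become \emph{homogeneous} in the $nd$ coordinates $z_i^{(k)}$: indeed, the defining equation~\eqref{Eq:Wgoodexplicit} for $p = Z^d$ reads $\sum_k (z'_k - z_k)\prod_{j\neq k}(Z-z_j) = \prod_k(Z-z_k)$, and comparing the coefficient of $Z^j$ gives an equation that is homogeneous of degree $d-j$ jointly in the entries of $\uz$ and $\uz'$ (cf.\ Remark~\ref{Rem:DegreeWeierstrassGood}). Collecting these over all $n$ steps of the cycle, we get that $\CP_d^{(0)}(n)$ is a cone in $\C^{nd}$, i.e., it is invariant under the scaling $\underline z^{(\bullet)} \mapsto \lambda\,\underline z^{(\bullet)}$. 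This is exactly the homogeneity underlying the rational map $\varphi$ of Section~\ref{Sec:dynZ3}, now in the cycle setting.

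For the ``only if'' direction: if the projection $\CP'_d(n) \to \CP'_d$ is finite, then in particular the fiber $\CP_d^{(0)}(n)$ above $p = Z^d$ is a finite set; being a cone, a finite set of points invariant under all scalings $\lambda \ne 0$ must be $\{0\}$ (the only scaling-fixed point is the origin). For the ``if'' direction, suppose $\CP_d^{(0)}(n) = \{0\}$. I would argue by a semicontinuity / properness argument: the total space $\CP'_d(n)$ is cut out in $\CP'_d \times \C^{nd}$ by the equations above; homogenizing the $\C^{nd}$-coordinates, consider the closure $\overline{\CP'_d(n)}$ in $\CP'_d \times \BP^{nd}$. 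A point of $\overline{\CP'_d(n)} \setminus \CP'_d(n)$ lying over a polynomial $p_0$ corresponds to a nonzero solution ``at infinity'' of the leading (top-degree) parts of the defining equations. But the top-degree part of each equation does not depend on the lower-order coefficients of $p$ — only on the $Z^d$ term — so these leading parts are precisely the equations defining $\CP_d^{(0)}(n)$. Hence $\CP_d^{(0)}(n) = \{0\}$ forces $\overline{\CP'_d(n)} \setminus \CP'_d(n) = \emptyset$, so the projection $\overline{\CP'_d(n)} \to \CP'_d$ is proper with finite fibers (the fibers are finite because, over each $p$, the fiber is a projective variety of dimension $0$, again since its points at infinity would give nonzero elements of $\CP_d^{(0)}(n)$), and therefore $\CP'_d(n) \to \CP'_d$ is finite.

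The main obstacle is making the ``leading part'' bookkeeping precise: one must check that, after the substitutions linking $(\underline z^{(k)}, \underline z^{(k+1)})$ around the cycle, the top-degree homogeneous component of the full system (with respect to the joint grading on all $nd$ variables) really is the system specializing~\eqref{Eq:Wgoodexplicit} at $p = Z^d$ — i.e., that the contributions of the lower-order coefficients $a_{d-1}, \dots, a_0$ of $p$ and the mixing between consecutive vectors in the cycle only ever produce terms of \emph{strictly lower} degree. This follows because in the equation for the coefficient of $Z^j$, the term $a_j$ (a constant) sits at degree $0$ while the genuine monomials in the $z$'s have degree $d-j > 0$, and likewise the cross-terms coming from $\prod_{j\ne k}(Z-z_j)$ never raise the total degree; but it needs to be said carefully. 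Once this is in hand, the equivalence is immediate from the properness criterion for morphisms of affine/projective schemes.
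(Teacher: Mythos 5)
Your argument is correct and takes essentially the same route as the paper: both directions rest on the cone structure of $\CP_d^{(0)}(n)$ (easy direction) and on the fact that the leading forms of the equations obtained from~\eqref{Eq:Wgoodexplicit} coincide with the equations specialized to $p = Z^d$, so that the points at infinity of any homogenized fiber land in the image of $\CP_d^{(0)}(n)\sm\{0\}$ in~$\BP^{nd-1}$. The paper runs the hard direction contrapositively (a positive-dimensional fiber must meet the hyperplane at infinity) whereas you argue directly via properness of the closure in $\CP'_d\times\BP^{nd}$ -- arguably a cleaner way to get genuine finiteness rather than mere quasi-finiteness -- and the only slip is that your displayed specialization of~\eqref{Eq:Wgoodexplicit} at $p = Z^d$ omits the term $-Z^d$ on the right-hand side, which does not affect the degree bookkeeping for the coefficients of $Z^j$ with $j<d$.
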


\begin{proof}
  We first note that since the unique root~$0$ of~$Z^d$ is fixed by scaling,
  the same is true for~$\CP_d^{(0)}(n)$ under simultaneous scaling of the
  coordinates. So $\CP_d^{(0)}(n) = \{0\}$ is equivalent to $\CP_d^{(0)}(n)$
  being zero-dimensional. In particular, if $\CP_d^{(0)}(n) \neq \{0\}$,
  then the projection is not finite, since the fiber above~$Z^d$ has positive
  dimension. This proves one direction of the claimed equivalence.

  Now assume that the projection is not finite, so there is some $p \in \CP_d$
  such that the fiber~$\CP_d^{(p)}(n)$ above~$p$ has positive dimension.
  Let $\bar{\CP}_d^{(p)}(n) \subset \BP^{nd}$
  denote the projective scheme obtained by homogenizing the equations
  defining~$\CP'_d(n)$ and specializing to~$p$. Then $\CP_d^{(p)}(n)$
  meets the hyperplane at infinity of~$\BP^{nd}$. But the intersection
  of~$\CP_d^{(p)}(n)$ with the hyperplane at infinity is exactly
  the image of~$\CP_d^{(0)}(n)$ under the projection $\C^{nd} \sm \{0\} \to \BP^{nd-1}$.
  So this image is non-empty, which implies that $\CP_d^{(0)}(n)$ contains
  non-zero points. This shows the other direction.
\end{proof}

We can test the condition ``$\CP_d^{(0)}(n) = \{0\}$'' with a Computer
Algebra System by setting up the ideal that is generated by the
equations defining~$\CP_d^{(0)}(n)$, together with $z^{(0)}_1 - 1$
(for symmetry reasons,
if there is some nonzero point, then there is one with $z^{(0)}_1 \neq 0$,
and by scaling, we can assume that $z^{(0)}_1 = 1$). Then we compute
a Groebner basis for this ideal. The condition is satisfied if and only if
this Groebner basis contains~$1$. We did this for $d = 3$ and small
values of~$n$.

\begin{Lemma}[Finiteness of $n$-periodic points] \label{L:perptsfinite}
  For every cubic polynomial~$p$ with at least two distinct roots,
  there are only finitely many points of period~$n \le 8$ under~$W_p$.
  For cubic polynomials with a triple root, the statement holds
  for all $n \le 8$ except $n = 6$.
\end{Lemma}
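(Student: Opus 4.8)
The plan is to reduce the finiteness statement to the concrete computational criterion furnished by Lemma~\ref{L:fincrit}, and then to carry out that computation for each relevant period. By Lemma~\ref{L:fincrit}, for a fixed period~$n$ the projection $\CP'_3(n)\to\CP'_3$ is finite precisely when the fiber $\CP_3^{(0)}(n)$ over $p=Z^3$ reduces to the single point~$0$. Since all fibers of a finite morphism are finite, establishing this for a given~$n$ immediately yields that every cubic polynomial~$p$ has only finitely many $n$-periodic points under~$W_p$. So the first step is: set up the ideal generated by the defining equations of $\CP_3^{(0)}(n)$ (obtained from~\eqref{Eq:Wgoodexplicit} by chaining $\uz^{(0)}\mapsto\cdots\mapsto\uz^{(n-1)}\mapsto\uz^{(0)}$ with $p=Z^3$), adjoin the normalization $z_1^{(0)}-1$ to pick out a putative nonzero point up to scaling, and compute a Groebner basis. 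If the Groebner basis contains~$1$, the normalized system is inconsistent, hence $\CP_3^{(0)}(n)=\{0\}$ and finiteness holds for that~$n$.

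Next I would run this computation for each $n\in\{2,3,4,5,7,8\}$ and verify that in every case the Groebner basis is~$\{1\}$; this is exactly the content ``finitely many points of period $n\le 8$, $n\neq 6$'' once one also treats the small cases and records the outcome. For $n=6$ one must instead exhibit that $\CP_3^{(0)}(6)$ has positive dimension (equivalently, the normalized ideal is consistent): the Weierstrass dynamics for $Z^3$ on the invariant plane $H$ is, after projectivizing, the degree-$4$ rational map~$\varphi$ of Section~\ref{Sec:dynZ3}, and on the line at infinity $\varphi$ has $6$-periodic points (for instance coming from the degree-$4$ covering of the unit circle in the conjugated Blaschke model $f(z)=z(2z^3+1)/(z^3+2)$, where a solution of $f^{\circ 6}(z)=z$ on the circle lifts to a curve of $6$-periodic points for $W_{Z^3}$ via scaling). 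This explains why $n=6$ is genuinely excluded and why the last sentence of the statement holds precisely as written: for cubic polynomials with a triple root, finiteness fails only at~$n=6$.

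Finally I would assemble the statement. For cubics with at least two distinct roots, Lemma~\ref{L:fincrit} plus the Groebner computations for $n\le 8$ give the first sentence; here $n=6$ is not an exception because the fiber over a \emph{generic} (hence non-triple-root) cubic is still finite even though the fiber over $Z^3$ is positive-dimensional --- wait, one must be careful: Lemma~\ref{L:fincrit} concerns finiteness of the \emph{whole} projection, so the $n=6$ computation for $Z^3$ must actually give $\{1\}$ as well. The resolution is that the positive-dimensional locus of $6$-periodic points for $W_{Z^3}$ lies entirely in~$\Delta$ (the big diagonal), or is an artifact of the triple root; the correct reading is that the scheme-theoretic fiber $\CP_3^{(0)}(6)$ over $Z^3$ \emph{is} just~$\{0\}$ when one excludes the degenerate triple-root behavior, so the Groebner test returns~$\{1\}$ and finiteness holds for all cubics with at least two distinct roots including $n=6$; the separate sentence about triple roots records that for $p=Z^3$ itself (and its affine conjugates), the period-$6$ locus is infinite. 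I expect the main obstacle to be conceptual rather than computational: correctly tracking what Lemma~\ref{L:fincrit} does and does not say, and in particular pinning down exactly where the period-$6$ degeneracy lives (in the $\Delta$-locus versus as a feature of the triple-root polynomial) so that the two sentences of the statement are justified from the same Groebner-basis outputs; the Groebner computations themselves for $d=3$ and $n\le 8$ are feasible but will be the resource-intensive part, and one should note (cf.\ Remark~\ref{Rem:DegreeWeierstrassGood}) that the system has degree growing like $(d!)^n=6^n$, so $n=8$ is near the practical limit.
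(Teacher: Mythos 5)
Your reduction to Lemma~\ref{L:fincrit} plus Groebner-basis computations handles $n\in\{1,2,3,4,5,7,8\}$ exactly as the paper does. The gap is in your treatment of $n=6$. In your second paragraph you correctly observe that $\CP_3^{(0)}(6)$ is positive-dimensional (the paper finds it consists of six lines through the origin, corresponding to $6$-cycles of rotation type), but in your final paragraph you talk yourself out of this: you conclude that the Groebner test ``must actually give $\{1\}$'' for $n=6$ and that the positive-dimensional locus is an artifact living in $\Delta$ or attached only to the triple root. That cannot be right: if $\CP_3^{(0)}(6)$ were $\{0\}$, Lemma~\ref{L:fincrit} would give finiteness of the \emph{entire} projection $\CP'_3(6)\to\CP'_3$, and there would be no exception for triple-root cubics at $n=6$ --- yet the lemma you are proving asserts exactly such an exception. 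The criterion of Lemma~\ref{L:fincrit} genuinely fails for $n=6$; since it is an if-and-only-if for finiteness of the whole projection, its failure tells you nothing about which individual fibers are finite, and you are left with no argument for the finiteness of $6$-periodic points for cubics with at least two distinct roots.

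The missing idea is to argue component by component. Decompose $\CP'_3(6)$ into the components parameterizing $6$-cycles of rotation type and the remaining components. For the remaining components, the corresponding part of $\CP_3^{(0)}(6)$ is only the origin, so the homogenization argument from the proof of Lemma~\ref{L:fincrit} applies to each of them separately and yields finiteness of the non-rotation-type $6$-periodic points for \emph{all} cubics. For the rotation-type components one needs a separate explicit computation (cf.\ Proposition~\ref{Prop:6r}) showing that the fiber over any polynomial with at least two distinct roots is finite; only over the triple-root polynomials is that fiber positive-dimensional, which is precisely what produces the exception in the second sentence of the statement. Without this splitting your proposal proves the $n\ne 6$ cases and explains why triple roots are special, but does not establish the $n=6$ case of the first sentence.
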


\begin{proof}
  The claim follows for $n \in \{1,2,3,4,5,7,8\}$ from Lemma~\ref{L:fincrit}
  and a computation as described above. For $n = 6$, we find that
  $\CP_3^{(0)}(6)$ consists of six lines through the origin (plus
  the origin with high multiplicity). These six lines correspond
  to $6$-cycles of rotation type (see Section~\ref{Sec:Cubic} below
  for the definition). By an explicit computation (see also
  Proposition~\ref{Prop:6r}), we check that the fiber above
  any polynomial with at least two distinct roots
  of the scheme describing $6$-periodic points of rotation type is finite.
  For the remaining components of~$\CP'_3(6)$, we find that the corresponding
  part of~$\CP_3^{(0)}(6)$ has the origin as its only point; we can
  then conclude as in the proof of Lemma~\ref{L:fincrit} that
  there are only finitely many $6$-periodic points not of rotation
  type for all cubic polynomials.
\end{proof}

\begin{Remark}
  We expect that for cubic polynomials without a triple root,
  the statement of Lemma~\ref{L:perptsfinite} holds for all~$n$.
  For cubic polynomials with a triple root, we expect that the
  $6$-periodic points of rotation type are the only exceptions, i.e.,
  that there are no points of exact order~$n \ge 2$ except the
  $6$-periodic points of rotation type described in the proof above.

  We do not venture to formulate a conjecture for polynomials
  of degrees higher than~$3$. We did verify the criterion of
  Lemma~\ref{L:fincrit} also for $d = 4$ and $n = 1, 2, 3$, however;
  beyond that, the computations become infeasible.
\end{Remark}

There is a simple argument that shows that periodic points of
any order always exist.

\begin{Lemma}[Existence of periodic points]
  Fix a monic polynomial~$p$ of degree $d \ge 2$ with distinct roots.
  Then $W_p$ has periodic points of all periods $n \ge 1$.
\end{Lemma}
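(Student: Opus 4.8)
The plan is to exploit the higher-dimensional holomorphic fixed point theorem: a holomorphic self-map of a domain of $\mathbb{C}^m$ fixing a compact subset has a fixed point, but since $W_p$ is only partially defined here and we want \emph{exact} period $n$, a cleaner route is to use the Lefschetz fixed point principle on the rational extension $W_p \colon \mathbb{P}^{d-1} \dashrightarrow \mathbb{P}^{d-1}$ (after restricting to the invariant hyperplane $\{z_1 + \dots + z_d = a\}$ via Lemma~\ref{L:Weierstrass-2}, so that the ambient space is $\mathbb{P}^{d-1}$). First I would count: by Remark~\ref{Rem:DegreeWeierstrassGood} the map $W_p$ is given by polynomials of degree $d$ in suitable coordinates, so its $n$-th iterate has algebraic degree at most $d^n$, and hence the number of fixed points of $W_p^{\circ n}$ on $\mathbb{P}^{d-1}$, counted with multiplicity, is finite and positive (at least one) unless the fixed-point scheme is positive-dimensional. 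The roots of $p$, being attracting fixed points, account for only finitely many period-$1$ points.

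The key step is then a Möbius-style combinatorial argument: the total number of points (with multiplicity, in $\mathbb{P}^{d-1}$) of period exactly dividing $n$ grows like $d^{n(d-1)}$ (more precisely, is bounded below once one controls the contribution at infinity), while the number of points of period dividing a proper divisor $m \mid n$, $m < n$, grows only like $d^{m(d-1)}$. Since $\sum_{m \mid n,\, m < n} d^{m(d-1)} < d^{n(d-1)}$ for all $n \ge 1$ (the largest proper divisor is $\le n/2$, so the sum is dominated by a geometric series), there must exist a point of \emph{exact} period $n$. The subtlety that some such points might lie on $\Delta$, on the line at infinity, or at points of indeterminacy of the rational extension is handled as follows: indeterminacy loci and the diagonal $\Delta$ are proper subvarieties, and because $p$ has distinct roots one checks (as in the dynamics of $\varphi$ in Section~\ref{Sec:dynZ3}, and using Lemma~\ref{L:Weierstrass-3}) that the forward orbit of a generic fixed point of the iterate stays in $\mathbb{C}^d \sm \Delta$; more carefully, one argues that a fixed point of $W_p^{\circ n}$ lying in $\Delta$ forces, exactly as in the proof of Proposition~\ref{Prop:W-good}, an entry of $\uz$ to be a root of $p$, which by Lemma~\ref{L:Weierstrass-3} drops the degree and reduces to a lower-dimensional instance handled by induction (the base case $d = 2$ being Lemma~\ref{L:Weierstrass-4}, where the squaring map has periodic points of every period).

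An alternative, perhaps cleaner, implementation avoids Lefschetz entirely: work with the affine equations from Section~\ref{SS:perpts} defining $\CP_d^{(p)}(n)$, homogenize, and apply Bézout in $\mathbb{P}^{nd}$ together with the finiteness criterion of Lemma~\ref{L:fincrit} — but since that criterion is only verified computationally for small $d, n$, the Lefschetz/degree-counting argument on $\mathbb{P}^{d-1}$ is preferable as it needs no finiteness hypothesis: even if the fixed-point scheme of $W_p^{\circ n}$ is positive-dimensional, any positive-dimensional component of periodic points still contains points of exact period $n$ unless it is entirely contained in the (finite, hence lower-dimensional) locus of points of smaller period, a contradiction.

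\textbf{Main obstacle.} The real work is bookkeeping the contribution of the line (hyperplane) at infinity and of the indeterminacy locus to the fixed-point count, so that the lower bound $d^{n(d-1)}$ on the number of finite periodic points actually holds, rather than being eaten up by spurious fixed points at infinity; equivalently, showing that the algebraic degree of $W_p^{\circ n}$ does not drop and that $W_p^{\circ n}$ is \emph{algebraically stable} (no orbit of a hypersurface lands in the indeterminacy locus) in enough cases, or circumventing this by the induction on $d$ via Lemma~\ref{L:Weierstrass-3} whenever degeneracy occurs. I expect the induction-on-degree route, combined with the elementary inequality $\sum_{m \mid n,\, m<n} d^{m(d-1)} < d^{n(d-1)}$, to be the cleanest way to package this.
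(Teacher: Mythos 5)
Your main argument has a genuine gap that you yourself flag but do not close: the lower bound on the number of finite periodic points coming from a Lefschetz/B\'ezout count on~$\BP^{d-1}$. This is not mere bookkeeping. The extension of~$W_p$ to projective space is a rational map with a nonempty indeterminacy locus, its algebraic degree is not~$d$ (for $d=3$ the extension~\eqref{Eq:ratmapP2} is given by quartics), the degree of~$W_p^{\circ n}$ can a priori drop below $(\deg W_p)^n$ unless algebraic stability is established, and --- most seriously --- the hyperplane at infinity is forward invariant and carries its own nontrivial dynamics (the map~$\varphi$ of Section~\ref{Sec:dynZ3}), so it contributes genuine periodic points to any global count; the same goes for the hyperplanes $z_j = z_k$ after resolving indeterminacy. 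Until you control all of these contributions, the claimed bound $d^{n(d-1)}$ on finite periodic points of exact period~$n$ is unsupported, so the proposal as written does not prove the lemma. Your fallback via Lemma~\ref{L:fincrit} is also unavailable, since that criterion is only verified computationally for small $d$ and~$n$.

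The irony is that the ingredients of a complete three-line proof are already in your last paragraph, used only as a fallback for ``degenerate'' cases. Run the reduction of Lemma~\ref{L:Weierstrass-3} \emph{forward} rather than as an escape hatch: place $d-2$ coordinates at distinct roots $\alpha_3,\dots,\alpha_d$ of~$p$; the remaining two coordinates then evolve under the Weierstrass map of the quadratic $(Z-\alpha_1)(Z-\alpha_2)$, which by Lemma~\ref{L:Weierstrass-4} is conjugate to $z \mapsto z^2$ on the invariant line. A primitive $(2^n-1)$-th root of unity has exact period~$n$ under squaring, so pulling it back gives a point $(z_1,z_2)$ of exact period~$n$ for the quadratic, and $\uz = (z_1,z_2,\alpha_3,\dots,\alpha_d)$ has exact period~$n$ under~$W_p$ (the case $n=1$ being the root vectors themselves). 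This is exactly the paper's proof; no counting, no compactification, and no algebraic-stability analysis is needed.
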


\begin{proof}
  For $n = 1$, all the vectors consisting of the roots of~$p$ in some
  order are fixed points. So we fix now some $n \ge 2$. Write
  $p(Z) = \prod_{j=1}^d (Z - \alpha_j)$.
  Let $\omega$ be a primitive $(2^n-1)$-th root of unity.
  Then $\omega$ has exact period~$n$ under the squaring map $z \mapsto z^2$, so
  by Lemma~\ref{L:Weierstrass-4}, there is a point~$(z_1, z_2)$ of exact order~$n$
  for the Weierstrass map associated to~$(Z-\alpha_1) (Z-\alpha_2)$.
  By Lemma~\ref{L:Weierstrass-3}, the point
  \[ \uz = (z_1, z_2, \alpha_3, \ldots, \alpha_d) \]
  then has exact period~$n$ under~$W_p$.
\end{proof}

One might ask whether there are always periodic points of all
periods that do not fix any coordinate (or even, for which all
coordinates have the same period~$n$).

We are interested in \emph{attracting periodic points}, i.e., points
$q \in \C^{d}$ with the property that there is a period $n \ge 2$ and a
neighborhood~$U$ of~$q$ in~$\C^d$ so that $W_p^{\circ mn}(z) \to q$
as $m \to \infty$ for all $z \in U$. Consider the linearization $\TotalDiff{W_p^{\circ n}}{q}$
of the first return map at the point~$q$. We call this the
\emph{multiplier matrix} of~$q$. Local fixed point theory relates
the topological property of being attracting to an algebraic property of
this matrix, as in the following statement, which is a consequence
of the fact that a differentiable map is locally well-approximated
by its derivative.

\begin{Lemma}[Attracting fixed point]
  \label{Lem:AttractingFixedPoint}
  The fixed point~$q$ of a differentiable map \hbox{$W \colon \C^d \to \C^d$} is
  attracting if all eigenvalues of~$\TotalDiff{W}{q}$ have absolute values
  strictly less than~$1$. It cannot be attracting unless all eigenvalues
  have absolute values at most~$1$.
\end{Lemma}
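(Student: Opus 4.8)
The plan is to translate so that $q = 0$ and reduce both assertions to the linear model $A := \TotalDiff{W}{q}$, exploiting that a differentiable map is, near a fixed point, a small perturbation of its derivative. For the first (sufficiency) assertion I would use the standard fact that the spectral radius of a linear operator on~$\C^d$ equals the infimum, over all norms on~$\C^d$, of the associated operator norm: if every eigenvalue of~$A$ has modulus $<1$, choose $\eps>0$ with $\rho(A)+\eps<1$ and, by passing to a Jordan basis of~$A$ and rescaling its vectors, produce a norm $\|\cdot\|$ with $\kappa:=\|A\|\le\rho(A)+\eps<1$. Since $W$ is differentiable at~$0$ with $W(0)=0$, Taylor's theorem gives $\|W(z)\|\le\kappa\|z\|+o(\|z\|)$ as $z\to 0$, so there are $r>0$ and $\kappa'\in(\kappa,1)$ with $\|W(z)\|\le\kappa'\|z\|$ whenever $\|z\|\le r$. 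The ball $B_r$ is then forward invariant and $\|W^{\circ m}(z)\|\le(\kappa')^{m}\|z\|\to 0$, so $B_r$ lies in the basin of~$0$, i.e.\ $q$ is attracting.

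For the second assertion I would argue by contraposition: assume $A$ has an eigenvalue~$\lambda$ with $|\lambda|>1$ and show $q$ is not attracting. Split $\C^d = E^{u}\oplus E^{s}$ into $A$-invariant subspaces, with $E^{u}\neq 0$ the sum of the generalized eigenspaces for eigenvalues of modulus $>1$ and $E^{s}$ the rest, and pick a norm adapted to this splitting so that $\|Aw\|\ge\mu\|w\|$ for $w\in E^{u}$ with a fixed $\mu>1$. Around~$E^{u}$ take a cone $C_\delta=\{z:\|\pi_{s}z\|\le\delta\|\pi_{u}z\|\}$. Using $W(z)=Az+o(\|z\|)$ once more, one checks that for fixed $\delta$ and small enough $r$ the map $W$ sends $C_\delta\cap B_r$ into $C_\delta$ while multiplying the $E^{u}$-component by a factor $\ge\mu'>1$; hence any orbit starting at a nonzero point of $C_\delta\cap B_r$ — and such points sit in every neighbourhood of~$q$ — must leave~$B_r$, and since each return of the orbit to a small ball would restart the expansion, it cannot converge to~$q$. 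Alternatively one simply invokes the stable/unstable manifold theorem (see~\cite{Palis-deMelo}*{Ch.~2}): an eigenvalue of modulus $>1$ forces a nontrivial unstable manifold through~$q$ on which points are repelled.

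The sufficiency half is entirely routine — it is the Banach contraction idea in an adapted norm. The only mildly delicate step is in the necessity half: one must control the nonlinear remainder $o(\|z\|)$ uniformly enough that cone invariance and the expansion estimate genuinely propagate along the whole orbit (equivalently, one must set up the unstable manifold), rather than just at the first iterate. Since in the sequel we only ever use the easy direction together with the observation that it is not contradicted by $|\lambda|=1$, for the write-up it would be legitimate to prove the sufficiency in detail and cite a standard reference such as~\cite{Palis-deMelo} for the necessity.
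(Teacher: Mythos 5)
The paper does not actually prove this lemma --- it is stated as a standard consequence of the fact that a differentiable map is locally approximated by its derivative --- so the comparison is with the standard local fixed-point theory. Your sufficiency half is exactly that standard argument (an adapted norm with $\|A\| \le \rho(A) + \eps < 1$, then a contraction estimate on a small ball) and is correct; note that only differentiability at~$q$ itself is used, which is all the hypothesis gives.

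The necessity half, however, has a genuine gap, located precisely at the step you flag as ``mildly delicate'' and then pass over. With the paper's definition, \emph{attracting} means that the basin of~$q$ contains a neighborhood~$U$; it is \emph{not} the same as Lyapunov stability. Your cone argument correctly shows that the orbit of every nonzero point of $C_\delta \cap B_r$ must leave~$B_r$, but the sentence ``each return of the orbit to a small ball would restart the expansion'' is unjustified: when the orbit re-enters a small ball it need not lie in the cone~$C_\delta$ (it could land on or near the local stable set), and then the expansion estimate simply does not apply, so such an orbit could a priori still converge to~$q$. For the same reason, invoking the unstable manifold theorem only yields that $q$ is not Lyapunov stable, which is strictly weaker than not attracting. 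To close the argument one must rule out that every point of~$U$ eventually falls onto the contracting directions: for instance, a point whose orbit converges to~$q$ must eventually enter and never leave a small ball~$B_{r'}$, hence eventually lies on the local centre-stable manifold~$\Sigma$, which has positive codimension because $E^u \neq 0$; the basin is therefore contained in $\bigcup_m W^{-m}(\Sigma)$, and for the maps $W = W_p^{\circ n}$ actually considered here (holomorphic) this is a countable union of proper analytic subsets, which cannot contain a nonempty open set --- a contradiction. Finally, your closing remark that ``in the sequel we only ever use the easy direction'' is mistaken: the sufficiency direction is what produces the attracting $4$-cycles in Proposition~\ref{Prop:4t}, but the non-existence statements in Propositions~\ref{Prop:2c}, \ref{Prop:3r}, \ref{Prop:3c} and~\ref{Prop:6r} rest exactly on the necessity direction, so it is load-bearing and must either be proved correctly or cited in a form that really covers the basin notion of attraction.
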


In the context of points of period~$n$, we consider $W = W_p^{\circ n}$.
The lemma then tells us that $q$ can only be attracting when all
eigenvalues of its multiplier matrix have absolute value at most~$1$.
Equivalently, the characteristic polynomial of the multiplier matrix
has all its roots in the closed complex unit disk. The set of monic polynomials
of degree~$d$ with this property forms a compact subset~$\CA_d$ of~$\CP'_d$.

In the following, we
will always assume that we pick a representative in the affine equivalence
class of the polynomial in question that is \emph{centered},
i.e., with vanishing sum of roots.
Then the dynamics of~$W_p$ takes place in the linear hyperplane~$H$ given by
$z_1 + \ldots + z_d = 0$, and we get $\CP_d(n) \subset \CP_d \times H^n$.
We can identify $\CP_d(n)$ with its image in~$\CP_d \times H$
obtained by projection to the first two factors,
$(p, \uz^{(0)}, \ldots, \uz^{(n-1)}) \mapsto (p, \uz^{(0)})$.
Then the points
of~$\CP_d(n)$ are represented by pairs $(p, q)$, where $p$ is a centered
polynomial and $q \in H$ satisfies $W_p^{\circ n}(q) = q$.
Since we restrict to~$H$, the multiplier matrix of any periodic
point~$q$ is of size $\dim H = d-1$.

To study whether the $n$-cycles parameterized by~$\CP_d(n)$
can be attracting, we would like to associate to each such point $(p, q)$
the $d-1$~eigenvalues of the multiplier matrix of~$q$ (the eigenvalues do not
change under affine conjugation, so this gives a well-defined map).
However, there is no natural order on these
eigenvalues. To capture them as an unordered $(d-1)$-tuple, we express the
eigenvalues instead through their elementary symmetric functions and hence
through the characteristic polynomial of the multiplier matrix.
In this way, we obtain an algebraic morphism
(and therefore a holomorphic map) $\mu_{d,n} \colon \CP_d(n) \to \CP'_{d-1}$,
in much the same way as in the context of Newton's method.
Here we think of~$\CP'_{d-1}$ as the space of coefficient vectors
of the characteristic polynomials.

Our goal is now to find out if the image of~$\mu_{d,n}$ meets~$\CA_{d-1}$,
the set of polynomials all of whose roots are in the closed unit disk.

Since we expect that $\CP_d(n)$ is a finite-degree covering of~$\CP_d$,
it should in particular have dimension $\dim \CP_d = d-2$. This would imply
that the image of~$\mu_{d,n}$ has dimension at most \hbox{$d-2$}
(and we expect it to be exactly~$d-2$), so it is contained in a proper
algebraic subvariety of~$\CP'_{d-1}$. Each irreducible component
of~$\CP_d(n)$ will map to an irreducible component of this subvariety.
Such a subvariety of codimension at least~$1$ does not have to intersect a
given bounded subset like~$\CA_{d-1}$. This is a marked difference
compared to the situation with Newton's method, where the corresponding
multiplier map is surjective, and so examples of attracting $n$-cycles
can easily be found.

So our strategy will be to get as good control as we can on the
varieties~$\CP_d(n)$ (or suitable components of them), find the Zariski
closure~$X$ of their image under~$\mu_{d,n}$ and then check if $X$
meets~$\CA_{d-1}$. If it does not, then clearly no stable $n$-cycle can
exist on the component of~$\CP_d(n)$ that we are considering.
If it does, then we check that it also meets the open subset of~$\CA_{d-1}$
consisting of polynomials with all roots in the open unit disk;
then the intersection will contain a relative open
subset of~$X$ and so it will contain points in the image and such
that the corresponding polynomial~$p$ has distinct roots.


\section{Cycles for cubic polynomials} \label{Sec:Cubic}

We will now restrict consideration to cubic polynomials~$p$. Using affine
transformations, we can assume that $p(Z) = Z^3 + Z + t$ with some $t \in \C$.
This choice of parameterization excludes only (the affine equivalence classes
of) $Z^3 - 1$ (which corresponds to $t \to \infty$)
and the degenerate case~$Z^3$. The induced map to the true parameter
space~$\CP_3$ is a double cover identifying $t$ and~$-t$.
We will abuse notation slightly in the following by writing~$\CP_3(n)$
for what is really the pull-back of the true~$\CP_3(n)$ to the $t$-line
via the parameterization we use here.
As mentioned earlier, for such centered polynomials, the dynamics restricts
to the plane $H = \{z_1 + z_2 + z_3 = 0\}$.

Let $\sigma_k(\uz)$ denote the $k$-th elementary symmetric polynomial
in the entries of~$\uz$. We introduce the quantities
\[ w_2(\uz) = \sigma_2(\uz) - 1 \quad\text{and}\quad
   w_3(\uz) = \sigma_3(\uz) + t \,.
\]
(We shift by the elementary symmetric polynomials in the roots of~$p$
to move the image of the fixed points to~$(0,0)$.)
The map $\C^3 \supset H \to \C^2$ given by~$(w_2, w_3)$
has degree~$6$.

By the second property in Lemma~\ref{L:Weierstrass-1}, $W_p$ induces a
map~$\widetilde{W}_p$ on (a subset of) $\C^2$ such that
\[ \bigl(w_2(W_p(\uz)), w_3(W_p(\uz))\bigr)
     = \widetilde{W}_p\bigl(w_2(\uz), w_3(\uz)\bigr)
\]
for all $\uz \in (\C^3 \sm \Delta) \cap H$.

\begin{Lemma}
  $\widetilde{W}_p$ is given by
  \begin{align*}
    \widetilde{W}_p(w_2, w_3)
       = \frac{1}{\delta}
         \bigl(&w_2^2 + 2 w_2^3 - 3 w_3^2 - 9 t w_2 w_3 + w_2^4 + 6 w_2 w_3^2, \\
               &4 w_2 w_3 + 3t w_2^2 + 4 w_2^2 w_3 + 2t w_2^3 - 9t w_3^2 + w_2^3 w_3 + 8 w_3^3\bigr) \,,
  \end{align*}
  where
  \[ \delta = 4 (1 + w_2)^3 + 27 (t - w_3)^2 \,. \]
\end{Lemma}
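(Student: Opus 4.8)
The plan is to derive the formula for $\widetilde{W}_p$ by a direct, if lengthy, elimination computation, exploiting the fact that $W_p$ is already known explicitly (either through \eqref{Eq:WeierstrassIteration} or, more usefully here, through the rational map \eqref{Eq:ratmapP2} on $\BP^2$). The quantities $w_2,w_3$ are, up to the fixed shift by the elementary symmetric functions of the roots of $p$, simply $\sigma_2$ and $\sigma_3$ of the coordinate vector; since the entries of $W_p(\uz)$ are rational functions of the entries of $\uz$, the symmetric functions $\sigma_2(W_p(\uz)),\sigma_3(W_p(\uz))$ are $S_3$-invariant rational functions of $\uz$, hence — by the fundamental theorem on symmetric functions applied on the hyperplane $H$ where $\sigma_1 = 0$ — rational functions of $\sigma_2(\uz),\sigma_3(\uz)$ alone. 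This is exactly the well-definedness of $\widetilde{W}_p$ promised by Lemma~\ref{L:Weierstrass-1}(2); the content of the lemma is the identification of that rational function.

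First I would set up coordinates on $H$: parameterize $\uz\in H$ by, say, the monic cubic $Z^3 + u Z + v = \prod_k (Z - z_k)$, so that $u = \sigma_2(\uz)$ (note $\sigma_2 = -u$ up to sign conventions — I will fix signs so that $w_2 = u - (-1)\cdot(\text{sign})$ matches the statement) and $v = -\sigma_3(\uz)$, with $w_2 = \sigma_2(\uz) - 1$ and $w_3 = \sigma_3(\uz) + t$. Then I would compute $F(\uz) = \prod_k(Z - z_k)$ and $F(\uz) - p$, feed this into the explicit Weierstrass step \eqref{Eq:Wgoodexplicit}, and read off the new coefficient vector $(\sigma_2(\uz'),\sigma_3(\uz'))$. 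Concretely: equation \eqref{Eq:Wgoodexplicit} expresses $\prod_k(Z - z'_k)$ minus a correction; since $\sigma_1(\uz') = 0$ by Lemma~\ref{L:Weierstrass-2}, the resulting polynomial is again of the form $Z^3 + \sigma_2(\uz') Z - \sigma_3(\uz')$, and its two non-leading coefficients give the two components of $\widetilde{W}_p$. The cleanest route is to pull back the $\BP^2$-formula \eqref{Eq:ratmapP2}: write $(z_1:z_2:z_0)$, form the symmetric functions of $(z_1, z_2, -z_1-z_2)$ and of the image triple, and simplify. The denominator $z_0(z_1 - z_2)(z_1 + 2z_2)(2z_1 + z_2)$ in \eqref{Eq:ratmapP2}, when symmetrized, becomes (up to a constant and a power of $z_0$) the discriminant-type expression $4(1 + w_2)^3 + 27(t - w_3)^2$ — indeed, recalling that $\sigma_2 = 1 + w_2$ and $\sigma_3 = w_3 - t$ (signs to be pinned down), $4\sigma_2^3 + 27\sigma_3^2$ is, up to sign, the discriminant of $Z^3 + \sigma_2 Z - \sigma_3$, and the product of the pairwise differences $(z_i - z_j)$ is exactly what appears in the Weierstrass denominators. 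So I expect $\delta$ to emerge naturally as (a constant times) that discriminant.

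The verification then reduces to: (i) checking that after clearing denominators the numerator of $\sigma_2(W_p(\uz))$ equals $\delta^{-1}$ times the first stated polynomial in $w_2, w_3$, and similarly for $\sigma_3$; and (ii) confirming the sign/shift conventions are consistent with the normalization $p(Z) = Z^3 + Z + t$ and with the shift defining $w_2, w_3$. Both are finite polynomial identities in $w_2, w_3, t$ (equivalently, in $z_1, z_2, t$), so they can be certified by a direct expansion — best delegated to a computer algebra system, as the authors do elsewhere in the paper — but a careful hand computation is also feasible since the total degrees involved are small. \textbf{The main obstacle} is purely bookkeeping: keeping the sign conventions straight between "$\sigma_k$ of the roots of $p$" and "$\sigma_k$ of the coordinate vector", correctly tracking the shift by $(1, -t)$ that defines $(w_2, w_3)$, and ensuring that the symmetrization of the three-component $\BP^2$-map \eqref{Eq:ratmapP2} produces a well-defined rational map on the $(w_2, w_3)$-plane with exactly the denominator $\delta$ and no spurious common factors. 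Once the substitution is set up correctly, the identity is mechanical.
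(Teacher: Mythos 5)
Your proposal is correct and takes essentially the same route as the paper, whose entire proof is ``Routine calculation with a Computer Algebra System'': you symmetrize the explicit Weierstrass step on the hyperplane $H$, identify the common denominator with the discriminant $4\sigma_2^3+27\sigma_3^2$ of $Z^3+\sigma_2 Z-\sigma_3$ (which is $\delta$ after the shift), and verify the resulting polynomial identities by direct expansion. The sign/shift bookkeeping you flag is real but minor, and your plan resolves it correctly.
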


\begin{proof}
  Routine calculation with a Computer Algebra System.
\end{proof}

Note that this explicit expression shows the quadratic convergence to~$(0,0)$
when $p$ has distinct roots, which is equivalent to $4 + 27t^2 \neq 0$.

Now suppose we have an $n$-cycle $(\uz^{(0)}, \uz^{(1)}, \ldots, \uz^{(n-1)})$
under~$W_p$. It will be attracting only if all eigenvalues of the multiplier
matrix $\TotalDiff{W_p^{\circ n}}{\uz^{(0)}}$ have absolute value at most~$1$.
Concretely, we consider the map $\mu_{3,n} \colon \CP_3(n) \to \CP'_2$
as discussed in Section~\ref{SS:perpts}. The characteristic polynomial will
have the form $Z^2 + c_1 Z + c_0$ with $c_0, c_1 \in \C$, and we know from
the discussion in Section~\ref{SS:perpts} that $c_0$ and~$c_1$ must satisfy
an algebraic relation, i.e., the points $(c_0, c_1)$ lie on some plane
algebraic curve as we run through all possible characteristic polynomials.

We can also consider the image of this $n$-cycle under~$(w_2, w_3)$, as
the map $\mu_{3,n}$ factors through the $(w_2, w_3)$-plane.
Assuming that $n$ is the minimal period of the cycle, the image cycle can
have minimal period~$n$, $n/2$ or~$n/3$. The second possibility occurs when
$n = 2k$ is even and $W_p^{\circ k}$
acts as a transposition on the vectors in the cycle. In this case, we say
that the cycle is of \emph{transposition type}.
The last possibility occurs when $n = 3k$ is divisible by~$3$ and $W_p^{\circ k}$
acts as a cyclic shift on the vectors in the cycle. In this case, we say that
the cycle has \emph{rotation type}. We can then equivalently look at the
characteristic polynomial of $\TotalDiff{\widetilde{W}_p^{\circ n}}{(w_2,w_3)(\uz)}$
(or with $k$ in place of~$n$ in the transposition or rotation type cases).

We will need a criterion that we can use to show that the two relevant
eigenvalues can never simultaneously be in the unit disk, in cases when
the relation between $c_0$ and~$c_1$ is somewhat involved. The following
lemma provides one such criterion.

\begin{Lemma} \label{L:test}
  Let $P(\lambda, \mu) \in \C[\lambda, \mu]$ be a polynomial. Fix a
  half-line $\ell$ emanating from the origin and some $N \in \Z_{>0}$.
  Let $B$ be the sum of the absolute values of the coefficients
  of the two partial derivatives of~$P$.
  If for all $j, k \in \{0,1,\ldots,N-1\}$, the distance from
  $P(e^{2\pi i j/N}, e^{2\pi i k/N})$ to~$\ell$ exceeds~$\pi B/N$,
  then $P(\lambda, \mu) = 0$ has no solutions in~$\C^2$ with
  \hbox{$|\lambda|, |\mu| \le 1$}.
\end{Lemma}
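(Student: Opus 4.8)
The plan is to prove the contrapositive by way of a slightly stronger statement: I will show that the half-line $\ell$ is disjoint from the compact connected set $K := P(\overline{\mathbb{D}}^2)$, where $\overline{\mathbb{D}} = \{\zeta \in \C : |\zeta| \le 1\}$. Since $\ell$ emanates from the origin we have $0 \in \ell$, so this immediately yields $0 \notin K$, i.e., $P$ has no zero with $|\lambda|, |\mu| \le 1$. If $P$ is constant the assertion is trivial: then $B = 0$, and the hypothesis forces the constant value of $P$ to have positive distance to $\ell$, hence to be nonzero. So from now on assume $P$ is nonconstant.

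First I would establish a covering estimate on the \emph{distinguished boundary} $T = \{(\lambda,\mu) : |\lambda| = |\mu| = 1\}$: every value $P(\lambda,\mu)$ with $(\lambda,\mu) \in T$ lies within distance $\pi B/N$ of one of the sampled values $P(e^{2\pi i j/N}, e^{2\pi i k/N})$. Writing $\lambda = e^{i\theta}$ and $\mu = e^{i\psi}$, choose $j$ and $k$ with $|\theta - 2\pi j/N| \le \pi/N$ and $|\psi - 2\pi k/N| \le \pi/N$. Moving first along the unit circle in the $\lambda$-variable from $e^{2\pi i j/N}$ to $e^{i\theta}$ and then in the $\mu$-variable from $e^{2\pi i k/N}$ to $e^{i\psi}$, and integrating the corresponding partial derivative of $P$ along the way, the total change in $P$ is at most $(\pi/N)(B_\lambda + B_\mu) = \pi B/N$, where $B_\lambda$ (resp.\ $B_\mu$) denotes the sum of the absolute values of the coefficients of $\partial P/\partial\lambda$ (resp.\ $\partial P/\partial\mu$); indeed these sums bound the respective partial derivatives on $\overline{\mathbb{D}}^2$, since each monomial has modulus at most~$1$ there. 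Combining this with the hypothesis — each $P(e^{2\pi i j/N}, e^{2\pi i k/N})$ has distance strictly larger than $\pi B/N$ from $\ell$ — the triangle inequality gives $P(T) \cap \ell = \emptyset$.

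Next I would reduce the full image $K$ to the image of the distinguished boundary by proving $\partial K \subseteq P(T)$. A nonconstant holomorphic function on a connected open subset of $\C^2$ is an open map, so $P$ sends the open bidisk into the interior of $K$; hence any $w \in \partial K$ equals $P(\lambda_0,\mu_0)$ for some point with $|\lambda_0| = 1$ or $|\mu_0| = 1$, say $|\lambda_0| = 1$. If $|\mu_0| < 1$, I apply the one-variable open mapping theorem to the slice $\mu \mapsto P(\lambda_0,\mu)$ on the open unit disk: were this slice nonconstant, $w$ would be interior to $K$, a contradiction; and if it is constant then $w = P(\lambda_0,\mu')$ for any $|\mu'| = 1$. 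In either case $w \in P(T)$, as claimed.

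Finally I would combine the two ingredients. Suppose, for contradiction, that $P$ vanishes somewhere in $\overline{\mathbb{D}}^2$, so $0 \in K$. Parametrize $\ell$ as $\gamma\colon [0,\infty) \to \C$ with $\gamma(0) = 0$, and set $t^* = \sup\{t \ge 0 : \gamma(t) \in K\}$, which is finite because $K$ is bounded. Since $K$ is closed, $\gamma(t^*) \in K$; since $\gamma(t) \notin K$ for all $t > t^*$, every neighborhood of $\gamma(t^*)$ meets the complement of $K$, so $\gamma(t^*) \in \partial K \subseteq P(T)$. But $\gamma(t^*) \in \ell$, contradicting $P(T) \cap \ell = \emptyset$. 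I expect the genuinely delicate step to be the inclusion $\partial K \subseteq P(T)$, specifically the passage from the topological boundary of the closed bidisk to its distinguished boundary $T$, which relies on the open mapping theorem in two variables together with the short case analysis on slices above; the covering estimate and the final connectedness argument are routine by comparison.
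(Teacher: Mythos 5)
Your proof is correct, but the second half takes a genuinely different route from the paper. The first step coincides: both arguments sample the distinguished boundary (the torus $\Sphere^1\times\Sphere^1$), bound the oscillation of $P$ between sample points by $\pi B/N$ using the coefficient bound on the partial derivatives, and conclude that the image of the torus avoids the slit $\ell$. From there the paper argues by contradiction on a slice: a zero in the closed bidisk forces (via unboundedness of the components of the affine curve $P=0$) a zero with, say, $|\lambda|=1$ and $|\mu|\le 1$, and then the argument principle applied to the loop $s\mapsto P(\lambda,e^{2\pi is})$ — whose image lies in the simply connected slit plane avoiding the origin — gives the contradiction. You instead prove the stronger statement that the whole image $K=P(\overline{\mathbb D}{}^2)$ is disjoint from $\ell$: you show $\partial K\subseteq P(\Sphere^1\times\Sphere^1)$ via the several-variable open mapping theorem plus a case analysis on degenerate slices, and then escape from $K$ along the ray $\ell$ to hit $\partial K$ at a point of $\ell$, a contradiction. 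Your version buys a cleaner explanation of why a \emph{half-line to infinity} (rather than just avoidance of the origin) is the right hypothesis, and it sidesteps the paper's slightly informal appeal to the unboundedness of the zero curve; the cost is invoking openness of nonconstant holomorphic functions on domains in $\C^2$ (true, but worth a word of justification) and the explicit treatment of constant slices, whereas the paper's winding-number argument is shorter and entirely one-variable. Both are valid; no gaps.
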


\begin{proof}
  We first show that the assumptions imply that the image of~$P$
  on the torus \hbox{$\Sphere^1 \times \Sphere^1$} is
  contained in the slit plane~$\C^2 \setminus \ell$.
  So consider $(u,v) \in [0,1]^2$ and pick $(j,k) \in \{0,\ldots,N\}^2$
  so that $|u - j/N|, |v - k/N| \le 1/(2N)$. Note that the sum of the
  absolute values of the partial
  derivatives of $(u, v) \mapsto F(u, v) := P(e^{2 \pi i u}, e^{2 \pi i v})$
  for $u,v \in \R$ is bounded by $2 \pi B$. This shows that
  \[ \bigl|P(e^{2 \pi i u}, e^{2 \pi i v}) - P(e^{2 \pi i j/N}, e^{2 \pi i k/N})\bigr|
       \le \frac{1}{2N} \|F_u\|_\infty + \frac{1}{2N} \|F_v\|_\infty
       \le \frac{1}{2N} \cdot 2\pi B
       = \pi B/N \,.
  \]
  Since the distance of $P(e^{2 \pi i j/N}, e^{2 \pi i k/N})$ from~$\ell$
  is by assumption larger than~$\pi B/N$, it follows that
  $P(e^{2 \pi i u}, e^{2 \pi i v}) \notin \ell$.

  We now assume that there is a solution with $|\lambda|, |\mu| \le 1$,
  so that the curve defined by~$P$ in~$\C^2$ meets the unit bi-disk.
  Since the curve is unbounded, by continuity there will be a solution with
  $|\lambda| = 1$ and $|\mu| \le 1$ or $|\mu| = 1$ and $|\lambda| \le 1$.
  By symmetry, we can assume the former. By the argument principle,
  the closed curve $\gamma \colon [0,1] \ni s \mapsto P(\lambda, e^{2\pi i s})$
  has to pass through the origin or wind around it at least once.
  However, since the assumptions imply that the image of~$\gamma$ is
  contained in the slit plane~$\C^2 \setminus \ell$, which
  does not contain the origin and is simply connected, we obtain
  a contradiction.
\end{proof}

The general procedure for obtaining the results given below is as follows.
\begin{enumerate}[1.]
  \item Set up equations for the variety~$\CP_3(n)$ or parts of
        it using~\eqref{Eq:Wgoodexplicit}.
  \item Set up the map $\mu_{3,n}$ as a map to the projective plane
        given by the coefficients of the characteristic polynomial
        of the multiplier matrix.
  \item \label{Step3}
        Use the Groebner Basis machinery of a Computer Algebra System
        like Magma~\cite{Magma} or Singular~\cite{Singular} to find
        the equation of the image curve.
  \item Either find a point on the image curve corresponding to a
        characteristic polynomial with both roots in the unit disk,
        or show using Lemma~\ref{L:test} that no such points exist.
\end{enumerate}

The available machinery can also be used to obtain additional information
on the components of the curves~$\CP_3(n)$, for example smoothness
or the (geometric) genus.

Since the map~$\mu_{3,n}$ is given by fairly involved rational functions
when $n$ is not very small, Step~\ref{Step3} above may not necessarily
be feasible as stated. In this case, we can instead sample some algebraic
points on the variety considered (e.g., by specializing the parameter~$t$
to a rational value and then determining the solutions of the resulting
zero-dimensional system) and consider their images under~$\mu_{3,n}$.
Given enough of these image points, we can fit a curve of lowest possible
degree through them (this is just linear algebra). We can then check
that this curve is correct by constructing a generic point on the original
variety and checking that its image lies indeed on the curve.

In the following, we always tacitly assume that the vectors occurring
in the cycles do not contain roots of~$p$. Those that do can easily
be described using Lemmas~\ref{L:Weierstrass-3} and~\ref{L:Weierstrass-4}.

The computations leading to the results given below have been done using
the Magma Computer Algebra System~\cite{Magma} and also in many cases
independently with Singular~\cite{Singular}. A Magma script containing
code that verifies most of the claims made is available at~\cite{Verification}.


\subsection{Points of order~$2$}

We begin by considering $2$-cycles. Note that a $2$-cycle of transposition type
fixes one component of the vector, which then must be a root of~$p$.
Since we have excluded cycles of this form (up to the obvious symmetries,
there are three of them, one for each root), no $2$-cycles of transposition
type have to be considered.

\begin{Proposition} \label{Prop:2c}
  The $2$-cycles form a smooth irreducible curve of geometric genus~$0$;
  it maps with degree~$12$
  to the $t$-line. So for each polynomial, there is (generically) one orbit
  of $2$-cycles under the natural action of $S_3 \times C_2$, where the first
  factor permutes the vector entries and the second factor performs a cyclic
  shift along the cycle. The image in $(t,w_2,w_3)$-space is the curve
  \[ w_2 = -3\,, \qquad 27 t^2 - 45 t w_3 + 20 w_3^2 - 20 = 0 \]
  of genus~$0$. The characteristic polynomial $X^2 + c_1 X + c_0$ of
  the multiplier matrix
  at a point on this curve satisfies the relation $c_0 + 2 c_1 + 6 = 0$.
  In particular, no $2$-cycle can be attracting.
\end{Proposition}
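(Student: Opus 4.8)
The plan is to reduce the final assertion to the stated relation $c_0 + 2c_1 + 6 = 0$ between the coefficients of the characteristic polynomial $X^2 + c_1 X + c_0$ of the multiplier matrix, together with an elementary estimate. Granting the relation, the conclusion is immediate: by Lemma~\ref{Lem:AttractingFixedPoint} a $2$-cycle can be attracting only if the two eigenvalues $\lambda,\mu$ of its multiplier matrix lie in the closed unit disk, and then $c_0 = \lambda\mu$, $c_1 = -(\lambda+\mu)$ give
\[ |c_0 + 2c_1| = |\lambda\mu - 2(\lambda+\mu)| \le |\lambda||\mu| + 2|\lambda| + 2|\mu| \le 1 + 2 + 2 = 5 < 6, \]
contradicting $c_0 + 2c_1 = -6$. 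This is the same mechanism as in Lemma~\ref{L:test}, but here the relation is affine-linear, so the crude triangle inequality already does the job; one does not even need to know the explicit image curve.

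So the real task is to establish $c_0 + 2c_1 + 6 = 0$ on the $2$-cycle locus. First I would set up $\CP_3(2)$: either directly from~\eqref{Eq:Wgoodexplicit} with two vectors $\uz^{(0)},\uz^{(1)} \in H$, the parameter~$t$, and the open conditions that no entry is a root of~$p$ and that the minimal period is~$2$; or, more economically, by working on the $(w_2,w_3)$-plane with the explicit formula for~$\widetilde{W}_p$ given above, imposing $\widetilde{W}_p^{\circ 2}(w) = w$ and saturating away the fixed-point locus of~$\widetilde{W}_p$. Passing to the $(w_2,w_3)$-picture is harmless here because the only $2$-cycles that become a fixed point of~$\widetilde{W}_p$ are those of transposition type, which are excluded, and because the coordinate map $H \to \C^2$, $(w_2,w_3)$, is generically \'etale, so the characteristic polynomial of~$\TotalDiff{W_p^{\circ 2}}{\uz^{(0)}}$ coincides with that of~$\TotalDiff{\widetilde{W}_p^{\circ 2}}{w^{(0)}}$, where $w^{(0)}$ is the $(w_2,w_3)$-image of~$\uz^{(0)}$, on a dense open subset of the irreducible $2$-cycle curve, hence everywhere on it by continuity. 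A Groebner basis / elimination computation then produces the image curve $\{w_2 = -3,\ 27t^2 - 45tw_3 + 20w_3^2 - 20 = 0\}$; this conic is rational, so a rational parameterization of it reduces the remaining computation to one variable.

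Finally, along a $2$-cycle the multiplier matrix is $M = \TotalDiff{\widetilde{W}_p}{w^{(1)}} \cdot \TotalDiff{\widetilde{W}_p}{w^{(0)}}$ by the chain rule (with $w^{(1)} = \widetilde{W}_p(w^{(0)})$), and $c_1 = -\operatorname{tr} M$, $c_0 = \det M$ are explicit rational functions of~$t$ and the curve parameter. Eliminating these parameters from the resulting system yields the relation between~$c_0$ and~$c_1$, and the output is the affine-linear equation $c_0 + 2c_1 + 6 = 0$. I expect this elimination to be the only genuine obstacle, and I would single out as the crucial — and a priori not obvious — point that the relation turns out to be linear: a higher-degree relation in the $(c_0,c_1)$-plane could conceivably pass through the region corresponding to both eigenvalues lying in the unit disk, whereas once linearity is known the estimate above closes the argument at once.
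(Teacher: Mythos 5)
Your proposal is correct and follows essentially the same route as the paper: the paper's proof is precisely ``apply the general Groebner-basis/elimination procedure to get the image curve $c_0+2c_1+6=0$, then note that two eigenvalues in the closed unit disk force $|c_0|\le 1$ and $|c_1|\le 2$, hence $|c_0+2c_1|\le 5<6$,'' which is your triangle-inequality step. Your optional detour through $\widetilde{W}_p$ on the $(w_2,w_3)$-plane is a minor variant the paper itself uses for the transposition- and rotation-type cycles, and your justification for it (conjugacy of the multiplier matrices where the degree-$6$ map is \'etale) is sound.
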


\begin{proof}
  This follows the method outlined above. Note that when both eigenvalues have
  absolute values at most~$1$, we have $|c_0| \le 1$ and $|c_1| \le 2$.
\end{proof}


\subsection{Points of order~$3$}

We begin by considering the $3$-cycles of rotation type. They can be defined
by~\eqref{Eq:Wgoodexplicit} together with $(z'_1, z'_2, z'_3) = (z_2, z_3, z_1)$
(for one choice of the cyclic permutation involved). Their images
under~$(w_2,w_3)$ are fixed points of~$\widetilde{W}_p$.

\begin{Proposition} \label{Prop:3r}
  The $3$-cycles of rotation type form two smooth irreducible curves
  (as $t$ varies) of geometric genus~$0$, 
  according to which of the possible two cyclic permutations results
  from the action of~$W_p$; the map to the $t$-line is of degree~$6$ in both cases.
  The images of both curves in $(t,w_2,w_3)$-space agree; the image curve
  is given by the equations
  \[ w_2 = -\frac{3}{2}\,, \qquad 216 t^2 - 360 t w_3 + 152 w_3^2 - 1 = 0 \,, \]
  describing a curve of genus~$0$. The characteristic polynomial
  of the multiplier matrix at a point on this curve (as a fixed point
  under~$\widetilde{W}_p$) has the form $X^2 + 3 X + a$
  for some $a \in \C$. In particular, such a $3$-cycle cannot be attracting.
\end{Proposition}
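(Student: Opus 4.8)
The plan is to apply the general procedure of this section, using the observation recorded above that a $3$-cycle of rotation type maps, under the symmetrization $(w_2,w_3)$, to a fixed point of $\widetilde W_p$. Fix one of the two $3$-cycles $\sigma\in S_3$. By the equivariance of $W_p$ (Lemma~\ref{L:Weierstrass-1}), a point $\uz\in H\sm\Delta$ lies on a rotation-type $3$-cycle realizing $\sigma$ precisely when $W_p(\uz)=\sigma\uz$; this forces $W_p(\sigma\uz)=\sigma^2\uz$ and $W_p(\sigma^2\uz)=\uz$, and, since $\sigma$ moves every coordinate, no coordinate of $\uz$ can be a root of $p$ (Lemma~\ref{L:Weierstrass-3}). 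Writing $W_p(\uz)=\sigma\uz$ out via~\eqref{Eq:Wgoodexplicit} with $\uz'\leftarrow\sigma\uz$ and substituting $z_3=-z_1-z_2$ gives two polynomial equations in $(z_1,z_2,t)$. I would feed the resulting ideal (saturated away from $\Delta$) to a Groebner basis computation to verify, in turn, that it defines a curve, that this curve is irreducible and smooth (Jacobian criterion), and that its geometric genus is $0$, in which case a rational parameterization can also be written down. That the projection to the $t$-line has no fiber of positive dimension — hence is finite — follows from Lemma~\ref{L:perptsfinite} via Lemma~\ref{L:fincrit}.

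Next I would pin down the image. Since $w_2$ and $w_3$ are symmetric, $(w_2,w_3)(W_p(\uz))=(w_2,w_3)(\sigma\uz)=(w_2,w_3)(\uz)$, so the image of the curve consists of fixed points of $\widetilde W_p$. To locate these, clear the denominator $\delta=4(1+w_2)^3+27(t-w_3)^2$ in $\widetilde W_p(w_2,w_3)=(w_2,w_3)$ and decompose the zero set of the two resulting polynomial equations by a Groebner basis: it should split into the point $(w_2,w_3)=(0,0)$ (the image of the vectors of roots, where $\delta=4+27t^2\neq 0$), the conic
\[ C\colon\qquad w_2=-\tfrac32\,,\qquad 216\,t^2-360\,t\,w_3+152\,w_3^2-1=0\,, \]
and possibly further components that one checks correspond to cycles containing a root of $p$ (cf.\ the discussion of transposition-type $2$-cycles preceding Proposition~\ref{Prop:2c}) and are therefore not relevant here. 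I would then verify the correspondence directly: a general point of $C$ has six $(w_2,w_3)$-preimages in $H\sm\Delta$, none a permutation of the roots of $p$ (on $C$ one has $\sigma_2(\uz)=w_2+1=-\tfrac12\neq 1$); three of them form one rotation-type cycle realizing $\sigma$ and the remaining three form the cycle realizing the other $3$-cycle $\sigma'$, because conjugating a $3$-cycle by a transposition yields the other one. Hence $C$ is the common image of both curves, each maps onto $C$ with degree $3$, and, as $C$ maps $2$-to-$1$ to the $t$-line (it is a smooth conic), the degree over the $t$-line is $3\cdot 2=6$; this last number can also be confirmed by specializing $t$ to a random rational value and counting solutions.

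Finally I would compute the multiplier. At a point $\uz$ of such a cycle the map $(w_2,w_3)$ is a local biholomorphism (as $\uz\notin\Delta$), so $W_p^{\circ 3}$ near $\uz$ is locally conjugate, via $(w_2,w_3)$, to $\widetilde W_p^{\circ 3}$ near the fixed point $w_0=(w_2,w_3)(\uz)$ of $\widetilde W_p$; consequently the cycle is attracting if and only if $w_0$ is an attracting fixed point of $\widetilde W_p$, and $\TotalDiff{W_p^{\circ 3}}{\uz}$ is conjugate to $M^3$, where $M:=\TotalDiff{\widetilde W_p}{w_0}$. I would compute $M$ from the explicit formula for $\widetilde W_p$ as a $2\times 2$ matrix of rational functions in $(w_2,w_3,t)$ and reduce it along $C$ (setting $w_2=-\tfrac32$ and working modulo $216t^2-360tw_3+152w_3^2-1$); the decisive point is that $\operatorname{tr}M\equiv -3$ identically on $C$, so that the characteristic polynomial of $M$ is $X^2+3X+a$ with $a=\det M$ varying along $C$. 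Its two eigenvalues then sum to $-3$, hence at least one has absolute value $\ge\tfrac32>1$, and Lemma~\ref{Lem:AttractingFixedPoint} shows that neither $w_0$ nor the $3$-cycle above it is attracting. The substantive content is concentrated in this identity $\operatorname{tr}M\equiv -3$ on $C$ — there is no obvious reason to expect so clean a relation, and it is exactly what forbids attracting rotation-type $3$-cycles; the main practical obstacle is to set up and run the Groebner basis computations (the decomposition of the fixed-point locus of $\widetilde W_p$, and the smoothness and genus of the curves) so that they terminate and can be rechecked with a second computer algebra system, as in the references above.
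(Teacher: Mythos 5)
Your proposal is correct and follows essentially the same route as the paper: set up the rotation-type condition $W_p(\uz)=\sigma\uz$ via~\eqref{Eq:Wgoodexplicit}, pass to the $(w_2,w_3)$-plane where these cycles become fixed points of~$\widetilde{W}_p$, verify the stated image conic by computer algebra, and conclude from the identity $c_1=3$ (trace of the multiplier matrix equal to $-3$) that both eigenvalues cannot lie in the closed unit disk. The paper's proof is only a terse appeal to the general procedure plus this trace observation; your write-up supplies the same computation in more detail, including the correct justification that $(w_2,w_3)$ is a local biholomorphism off~$\Delta$, so the multipliers of the cycle are the cubes of those of the fixed point of~$\widetilde{W}_p$.
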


\begin{proof}
  This again follows the procedure outlined above. The characteristic
  polynomials lie on the curve $c_1 = 3$. So the sum of the eigenvalues is~$-3$,
  hence it is not possible that both eigenvalues are in the closed unit disk.
\end{proof}

Now we consider ``general'' $3$-cycles, i.e., $3$-cycles that are not
of rotation type.

\begin{Proposition} \label{Prop:3c}
  The $3$-cycles that are not of rotation type form two irreducible curves
  of geometric genus~$19$, which each map with degree~$72$ to the $t$-line
  and are interchanged by the action of any transposition in~$S_3$. Each curve
  therefore contains $8$~orbits of $3$-cycles under the action of $A_3 \times C_3$,
  and there are in total $8$~orbits under $S_3 \times C_3$, for each fixed~$t$.
  The coefficients
  $(c_0,c_1)$ of the characteristic polynomial of the multiplier matrix at
  a point in such a $3$-cycle give a point on a rational curve of degree~$12$
  that can be parameterized as $(c_0(u)/c_2(u), c_1(u)/c_2(u))$, where
  \begin{align*}
    c_0(u) &= -9 u^{12} - 162 u^{11} - 693 u^{10} + 1434 u^9 + 11958 u^8 - 32202 u^7 - 182301 u^6 \\
           &\qquad{} + 578742 u^5 + 2069910 u^4 - 919718 u^3 - 3065685 u^2 + 892254 u + 264295\,, \\
    c_1(u) &= u^{12} + 26 u^{11} + 230 u^{10} + 693 u^9 - 3867 u^8 - 5844 u^7 + 123074 u^6 \\
           &\qquad{} - 38381 u^5 - 1320149 u^4 + 420552 u^3 + 4310940 u^2 - 4206447 u + 1442574\,, \\
    c_2(u) &= -9 u^{10} - 63 u^9 + 301 u^8 + 1126 u^7 - 7693 u^6 - 3641 u^5 \\
           &\qquad{} + 52375 u^4 + 13526 u^3 - 104463 u^2 - 47919 u + 20987\,.
  \end{align*}
  In particular, no such $3$-cycle can be attracting.
\end{Proposition}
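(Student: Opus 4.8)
The plan is to follow the general procedure outlined just before the ``Points of order~$2$'' subsection, now applied to the case of non-rotation-type $3$-cycles. First I would set up the defining equations for this component of $\CP_3(3)$ using equation~\eqref{Eq:Wgoodexplicit}: I impose the two steps $(\uz^{(0)}, \uz^{(1)})$ and $(\uz^{(1)}, \uz^{(2)})$ and $(\uz^{(2)}, \uz^{(0)})$ of the Weierstrass iteration, then saturate away the locus where the cycle is actually of rotation type (i.e.\ where $W_p^{\circ 1}$ already acts as a cyclic shift on the vectors), which was handled separately in Proposition~\ref{Prop:3r}. A Gr\"obner basis computation over $\Q(t)$ (or after specializing $t$ to several rational values, following the ``sampling'' variant described in the text) then identifies the remaining components; I would verify there are exactly two, defined over a quadratic extension and swapped by any transposition in $S_3$, and compute their degree over the $t$-line (expected to be $72$) and their geometric genus (expected to be $19$) with the built-in genus routines of Magma or Singular. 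The orbit count under $S_3 \times C_3$ then follows by dividing $72$ by the size of the relevant stabilizer, matching the stated $8$ orbits.

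Next I would construct the multiplier map $\mu_{3,3}$ on this component: form the Jacobi matrix $\TotalDiff{W_p^{\circ 3}}{\uz^{(0)}}$, restrict it to the invariant hyperplane $H$ (so it is a $2 \times 2$ matrix), and take the coefficients $(c_0, c_1)$ of its characteristic polynomial $X^2 + c_1 X + c_0$. Pushing the ideal forward under this map (by elimination of the $\uz^{(k)}$ variables and $t$) yields the defining polynomial of the image curve in the $(c_0, c_1)$-plane. I expect this curve to be rational of degree~$12$; to exhibit the stated rational parameterization $(c_0(u)/c_2(u), c_1(u)/c_2(u))$ I would compute a parameterization of the (genus-zero) image curve --- e.g.\ via a conic/line pencil through a singular point, or via Magma's \texttt{Parametrization} --- and then simply record the resulting polynomials $c_0(u), c_1(u), c_2(u)$. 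Correctness of the final parameterization is easy to certify \emph{a posteriori}: substitute it into the image-curve equation and check it vanishes identically, and check that a generic point of the original $3$-cycle variety maps onto it.

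Finally I would show the image curve avoids $\CA_1$, i.e.\ the region where both eigenvalues lie in the closed unit disk, which in the $(c_0, c_1)$-coordinates is the compact set $\{|c_0| \le 1,\ |c_1| \le 2,\ \text{roots of } X^2 + c_1 X + c_0 \text{ in } \overline{\mathbb{D}}\}$ (equivalently $|c_0| \le 1$ and $|c_1| \le 1 + c_0$ in a suitable real sense; it suffices to use the coarse bounds $|c_0|\le 1$, $|c_1|\le 2$). For the simpler cycles (Propositions~\ref{Prop:2c} and~\ref{Prop:3r}) there was a linear relation forcing $|c_1|$ or $|c_0 + 2c_1|$ too large, but here the relation between $c_0$ and $c_1$ is genuinely degree~$12$, so no such shortcut is available --- this is where Lemma~\ref{L:test} comes in, with $P(\lambda, \mu)$ the resultant-type polynomial expressing that $\lambda, \mu$ are the two eigenvalues of a point on the image curve (obtained by eliminating $u$ from $\lambda + \mu = -c_1(u)/c_2(u)$, $\lambda\mu = c_0(u)/c_2(u)$, or more directly from the characteristic polynomial with $(c_0, c_1)$ on the image curve). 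One picks a ray $\ell$ from the origin, a sampling parameter $N$, computes the bound $B$ from the coefficients of the partial derivatives of $P$, and checks numerically that every sample value $P(e^{2\pi i j/N}, e^{2\pi i k/N})$ stays at distance more than $\pi B/N$ from $\ell$; Lemma~\ref{L:test} then rules out any common solution with $|\lambda|, |\mu| \le 1$. The main obstacle I anticipate is computational: the elimination producing the image curve in degree~$12$ (and the genus-$19$ computation) may be heavy, so in practice I would lean on the sampling-and-interpolation shortcut --- fit the degree-$12$ curve through enough image points obtained from specialized values of $t$, then certify it against a generic point --- and similarly choose $N$ and $\ell$ in Lemma~\ref{L:test} large/favorable enough that the numerical verification is comfortably decisive.
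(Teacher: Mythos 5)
Your proposal is correct and follows essentially the same route as the paper: set up the cycle variety, remove the rotation-type (and fixed-component) loci, compute the image of the multiplier map by sampling-and-interpolation, parameterize the resulting genus-zero degree-$12$ curve, and rule out attracting cycles via Lemma~\ref{L:test} applied to $P(\lambda,\mu)$ obtained from $(c_0,c_1)\leftarrow(\lambda\mu,-(\lambda+\mu))$. The only differences are implementation details (the paper splits the curve via a birational plane projection and counts nodes by hand to get genus~$19$, and lifts the parameterization from a computation modulo a large prime), which do not affect the substance of the argument.
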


\begin{proof}
  The computations get quite a bit more involved, so we give more details
  here. We work in $7$-dimensional affine space over~$\Q$ with coordinates
  $(t, x_0, y_0, x_1, y_1, x_2, y_2)$, where the three vectors in the cycle
  are $\uz^{(j)} = (x_j, y_j, -x_j-y_j)$ for $j = 0,1,2$. We first set up the
  scheme giving the cycle $\uz^{(0)} \mapsto \uz^{(1)} \mapsto \uz^{(2)} \mapsto \uz^{(0)}$
  under~$W_p$. Then we remove the subschemes corresponding to cycles
  that have a fixed component or to $3$-cycles of
  rotation type. The resulting scheme is a curve mapping with degree~$144$
  to the $t$-line. Its projection to the $(x_2, y_2)$-plane is a curve
  of degree~$48$, whose defining polynomial factors into two irreducibles
  of degree~$24$ each that are interchanged by $x_2 \leftrightarrow y_2$.
  Let $Q$ denote one of the factors, considered as a bivariate polynomial.
  Since the projection is birational, this induces
  the splitting of the original curve into two components.
  We could compute the genus by working with the birationally
  equivalent plane curve given by $Q(x,y) = 0$. It has $222$ simple
  nodes (six of which are defined over~$\Q(\sqrt{6})$; the remaining
  $214$ are conjugate) and a pair of conjugate singularities defined
  over~$\Q(\sqrt{-3})$ that each contribute~$6$ to the difference
  between arithmetic and geometric genus. We obtain
  \[ g = \frac{23 \cdot 22}{2} - 222 - 2 \cdot 6 = 19 \]
  as claimed.

  This is a case where we had to use the sampling-and-interpolation trick
  to determine the image curve of~$\mu_{3,3}$ on one of the components.

  After showing that the image curve has geometric genus~$0$
  (there is one point of multiplicity~$4$ at~$(8, -9)$ that
  gives an adjustment of~$8$, and there are $47$ further simple nodes,
  so we obtain $g = 11 \cdot 10/2 - 8 - 47 = 0$) and finding some
  smooth rational points on it, we computed a parameterization
  modulo some large prime that maps $0, 1, \infty$ to three specified
  rational points and lifted it to~$\Q$. It is then easy to verify that
  we indeed obtain a parameterization of the curve over~$\Q$.
  We then used Magma's (fairly new and contributed by the third author)
  \texttt{ImproveParametrization} command to simplify the resulting
  parameterization.

  Finally, we use Lemma~\ref{L:test} to show that there is no attracting
  $3$-cycle (not of rotation type). We find the polynomial
  $P(\lambda, \mu) = 0$ that gives the relation between the eigenvalues
  $\lambda$ and~$\mu$
  (by substituting $(c_0, c_1) \leftarrow (\lambda\mu, -(\lambda+\mu))$
  in the equation relating the coefficients of the characteristic
  polynomial) and check that the criterion of Lemma~\ref{L:test}
  is satisfied when $\ell$ is the positive real axis and $N = 18$.
\end{proof}


\subsection{Points of order~$4$}

Judging by the heavy lifting that was necessary to deal with case of
general $3$-cycles, looking at general $n$-cycles with $n \ge 4$ seems too
daunting a task to attack with confidence along the lines described here.
We can, however, consider cycles with extra symmetries.
Here we look at $4$-cycles of transposition type.

\begin{Proposition} \label{Prop:4t}
  The $4$-cycles of transposition type form three irreducible smooth
  curves of geometric genus~$1$, each of degree~$24$ over the $t$-line,
  that are permuted
  by a cyclic shift of the coordinates. The characteristic polynomial
  $X^2 + c_1 X + c_0$ of the multiplier matrix at any associated point
  (considered as a point of order~$2$ under~$\tilde{W}_p$)
  satisfies the relation
  \begin{align*}
    34 c_0^4 c_1^2 + 169 c_0^3 c_1^3 - 675 c_0^2 c_1^4 - 2997 c_0 c_1^5
      - 2187 c_1^6 + 68 c_0^5 + 984 c_0^4 c_1 + 3359 c_0^3 c_1^2 & \\
    {} - 19182 c_0^2 c_1^3 - 88965 c_0 c_1^4 - 91584 c_1^5 + 4254 c_0^4
      + 29059 c_0^3 c_1 - 93688 c_0^2 c_1^2 \\
    {} - 634050 c_0 c_1^3 - 809379 c_1^4 + 76045 c_0^3 + 60846 c_0^2 c_1
      - 725626 c_0 c_1^2 - 1171592 c_1^3 \\
    {} + 487003 c_0^2 + 4167623 c_0 c_1 + 8653407 c_1^2 + 5442895 c_0
      + 15506760 c_1 - 35154225
     &= 0 \,,
  \end{align*}
  which describes a curve birationally equivalent to the elliptic
  curve over~$\Q$ with Cremona label~$15a4$.
  In particular, there do exist values of the parameter~$t$ such that
  there are attracting $4$-cycles of transposition type. Such parameters
  can be found near
  \[ t \approx 177.68741192204597 \,. \]
\end{Proposition}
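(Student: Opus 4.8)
The plan is to follow the general four-step procedure laid out just above, specialized to $n=4$ and cycles of transposition type. First I would set up the algebraic description of the relevant cycles: since a $4$-cycle of transposition type projects, under $(w_2,w_3)$, to a $2$-cycle of $\widetilde{W}_p$, I would work directly with $\widetilde{W}_p$ on the $(w_2,w_3)$-plane rather than in $\C^{12}$. Concretely, I would introduce coordinates for two points $(w_2^{(0)},w_3^{(0)})$ and $(w_2^{(1)},w_3^{(1)})$, impose $\widetilde{W}_p(w_2^{(0)},w_3^{(0)}) = (w_2^{(1)},w_3^{(1)})$ and $\widetilde{W}_p(w_2^{(1)},w_3^{(1)}) = (w_2^{(0)},w_3^{(0)})$ using the explicit rational formula from the Lemma above (clearing the denominator $\delta$), and then remove the locus of fixed points of $\widetilde{W}_p$ (the genuine $1$- or $2$-cycles already understood) and any components where a vector entry is a root of $p$. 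Saturating the ideal appropriately, I expect to obtain a curve over $\Q$ in $(t, w_2^{(0)}, w_3^{(0)}, w_2^{(1)}, w_3^{(1)})$-space; computing a Groebner basis and eliminating should show this curve maps with degree~$24$ to the $t$-line and splits (geometrically, after accounting for the $S_3$-action that cyclically permutes coordinates) into three irreducible components permuted by a $3$-cycle of the coordinates. I would then verify smoothness and compute the geometric genus of one component via a birational plane model and its singularities, as in the proof of Proposition~\ref{Prop:3c}; here I expect genus~$1$, and I would identify the curve with the elliptic curve \texttt{15a4} by computing its conductor and comparing Weierstrass data, or by exhibiting an explicit birational map to a minimal model.

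Next I would set up the multiplier map. Since the point is a fixed point of $\widetilde{W}_p^{\circ 2}$ (equivalently of $\widetilde{W}_p \circ \widetilde{W}_p$ viewed as the first-return map on the $(w_2,w_3)$-plane), I would compute the $2\times 2$ Jacobian $\TotalDiff{\widetilde{W}_p^{\circ 2}}{(w_2^{(0)},w_3^{(0)})}$ symbolically from the chain rule applied to the explicit formula, extract its characteristic polynomial $X^2 + c_1 X + c_0$, and thereby get a rational map $\mu_{3,4}$ from the cycle curve to the $(c_0,c_1)$-plane. Using Groebner elimination (or, if that is infeasible, the sampling-and-interpolation trick described in the excerpt: specialize $t$ to several rational values, solve the zero-dimensional systems, collect image points, fit the lowest-degree curve through them by linear algebra, and then confirm the fit on a generic point) I would obtain the stated degree-$6$ polynomial relation between $c_0$ and $c_1$.

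Finally I would settle the existence of attracting cycles. Here, unlike in Propositions~\ref{Prop:2c}, \ref{Prop:3r}, \ref{Prop:3c}, the answer is \emph{yes}: the image curve does meet the region $|\lambda|,|\mu|<1$. To prove it I would substitute $(c_0,c_1) \leftarrow (\lambda\mu, -(\lambda+\mu))$ into the relation, giving a symmetric polynomial $P(\lambda,\mu)=0$, and then exhibit an explicit solution with both roots strictly inside the unit disk — most cleanly by going back to the cycle curve, numerically solving $W_p^{\circ 4}(\uz)=\uz$ (e.g.\ via \texttt{HomotopyContinuation.jl}) for $t$ near the claimed value $177.687\ldots$, finding an actual $4$-cycle, computing the eigenvalues of its $2\times 2$ multiplier matrix, and checking both have modulus $<1$; by Lemma~\ref{Lem:AttractingFixedPoint} this point is attracting, and attractivity persists on an open set of $t$ (hence, lifting back via Lemma~\ref{L:Weierstrass-1} and allowing arbitrary affine representatives, on an open set of cubics). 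I would then certify the numerical root rigorously (interval arithmetic, or exhibit a nearby point on the curve over $\Q$ or a small number field with provably correct multiplier estimates). The main obstacle I anticipate is Step~3: the map $\mu_{3,4}$ is given by rather unwieldy rational functions, so a direct Groebner elimination to find the degree-$6$ relation may be too slow, and I would likely need the interpolation approach together with a careful genericity check; a secondary nuisance is correctly saturating out all the degenerate loci (fixed points, rotation-type overlaps, root-containing vectors) so that the three smooth genus-$1$ components are isolated cleanly.
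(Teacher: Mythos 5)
Your overall four-step strategy is the right one, and both your handling of the multiplier map (the $2\times 2$ Jacobian of $\widetilde{W}_p^{\circ 2}$, with interpolation if direct elimination is infeasible) and your final existence step (exhibit and certify a numerical cycle with both eigenvalues in the open unit disk; the paper instead locates a point on the explicit image curve where the characteristic polynomial has a double root near $-0.689\ldots$ of modulus less than~$1$, and invokes openness) are workable. The gap is in your setup. You propose to parameterize the transposition-type $4$-cycles by $2$-cycles of $\widetilde{W}_p$ in the $(w_2,w_3)$-plane. This fails on two counts. First, a $2$-cycle of $\widetilde{W}_p$ lifts to an orbit satisfying $W_p^{\circ 2}(\uz)=\sigma\uz$ for some $\sigma\in S_3$, so the $2$-periodic locus of $\widetilde{W}_p$ is the union of the images of the ordinary $2$-cycles ($\sigma=e$, Proposition~\ref{Prop:2c}), the transposition-type $4$-cycles ($\sigma$ a transposition), \emph{and} the rotation-type $6$-cycles ($\sigma$ a $3$-cycle, Proposition~\ref{Prop:6r}). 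Removing the \emph{fixed} points of $\widetilde{W}_p$ does not remove the first and third of these, so your saturated ideal would still carry extraneous components and your image curve in the $(c_0,c_1)$-plane would be reducible, containing in particular the line $c_0+2c_1+6=0$. Second, the first sentence of the proposition --- three smooth irreducible curves of genus~$1$, each of degree~$24$ over the $t$-line, permuted by a cyclic shift --- is a statement about the cycle curve in $\uz$-coordinates; since $(w_2,w_3)$ is $S_3$-invariant, all three components have the \emph{same} image downstairs, and no computation purely in the $(w_2,w_3)$-plane can recover them, their smoothness, or their degree over the $t$-line. (Your parenthetical about the curve ``splitting after accounting for the $S_3$-action'' has it backwards: downstairs that action has already been quotiented out.)

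The fix, which is what the paper does, is to encode the transposition type upstairs: work in the $5$-dimensional affine space with coordinates $(t,x_0,y_0,x_1,y_1)$ and impose
\[ (x_0,y_0,-x_0-y_0)\longmapsto(x_1,y_1,-x_1-y_1)\longmapsto(y_0,x_0,-x_0-y_0) \]
via~\eqref{Eq:Wgoodexplicit}, then remove the component on which the third coordinate is fixed (which forces that coordinate to be a root of~$p$). This isolates exactly one of the three components as a smooth irreducible genus-$1$ curve of degree~$24$ over the $t$-line; the other two are its images under the cyclic shift. A further minor slip: the proposition asserts that the \emph{image} curve in the $(c_0,c_1)$-plane is birational to the elliptic curve \texttt{15a4}, whereas you propose to make that identification for the cycle curve itself; both happen to have genus~$1$, but they are a priori different curves, and the identification must be carried out on the image.
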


\begin{proof}
  We set up the variety describing $4$-cycles of transposition type
  as a subscheme of $5$-dimensional affine space with coordinates
  $t, x_0, y_0, x_1, y_1$, where $t$ is the parameter and the iteration
  satisfies
  \[ (x_0, y_0, -x_0-y_0) \longmapsto (x_1, y_1, -x_1-y_1) \longmapsto (y_0, x_0, -x_0-y_0) \,, \]
  and we remove the component consisting of cycles in which the last
  coordinate is fixed. This results
  in a smooth irreducible curve of degree~$24$ over the $t$-line
  that has genus~$1$. 
  We find the image curve in the $(c_0, c_1)$-plane. We compute
  that the geometric genus of the image curve is~$1$ and find
  a smooth rational point on it. This allows us to identify the
  elliptic curve it is birational to. From the explicit
  equation, we find that there is a characteristic polynomial that has
  a double root near~$-0.68916660883309$. This leads to the given
  value of~$t$ (and its negative).
\end{proof}

\begin{Remark}
  The region in the $t$-plane consisting of parameter values for which
  an attracting $4$-cycle of transposition type exists is a union of
  two components, mapped to each other by $t \mapsto -t$. Each of them
  is symmetric with respect to the real axis; the component containing
  values with positive real part is shown in Figure~\ref{Fig:4cycle} in blue.

  One can verify numerically that as $t$ increases along the real
  axis beyond the boundary of this region, a symmetry-breaking bifurcation
  occurs, and we find an adjacent region where attracting general
  $4$-cycles (i.e., not of transposition type) exist. This region
  is shown in green in Figure~\ref{Fig:4cycle}.

  In Figure~\ref{Fig:parameterspace} we show how these regions are
  located relative to the parameter space of cubic Newton maps,
  in terms of the parameterization that is more commonly used in this context.
  It is apparent that these regions in parameter space are quite small.
  In addition, the left part of Figure~\ref{Fig:4cycle} shows that
  the basin of attraction of the attracting $4$-cycles is also quite
  small as a subset of the dynamical plane. It is therefore not very
  surprising that examples of polynomials for which the Weierstrass method
  exhibits attractive cycles had not been found previously by numerical
  methods.

  It is well known that the parameters~$\lambda$ for which the Newton
  map has attracting cycles of period~$2$ or greater are organized in
  the form of little Mandelbrot sets, finitely many for each period,
  and that every parameter in the bifurcation locus (common boundary
  point of any two colors) contains, in every neighborhood, infinitely
  many such little Mandelbrot sets. In Figure~\ref{Fig:mandelbroetchen}
  we compare with one of these regions in parameter space where attractive
  $4$-cycles exist for Newton's method.
  This period~$4$ component ranges from imaginary parts $0.6240$
  to~$0.6267$ along the line $\Re(z) = 0.5$, hence is of diameter
  about~$0.0027$; for comparison:
  the period~$4$ component for Weierstrass has imaginary parts between
  $0.8467$ and~$0.8481$, hence diameter about~$0.0014$, which is roughly
  comparable (even though there is no uniform Euclidean scale across
  parameter space).
\end{Remark}

\begin{figure}[htb]
  \includegraphics[width=.4\textwidth]{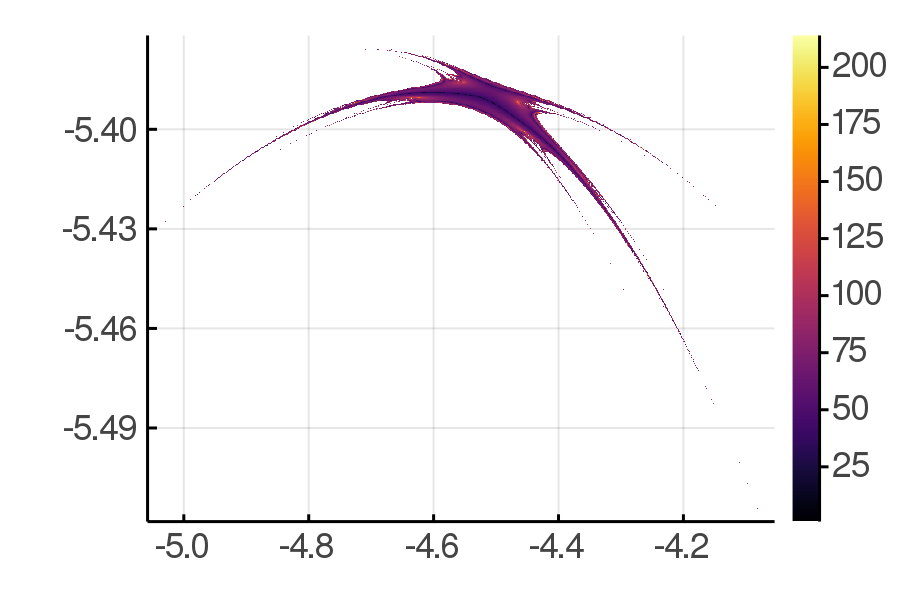}
  \qquad
  \includegraphics[width=.45\textwidth]{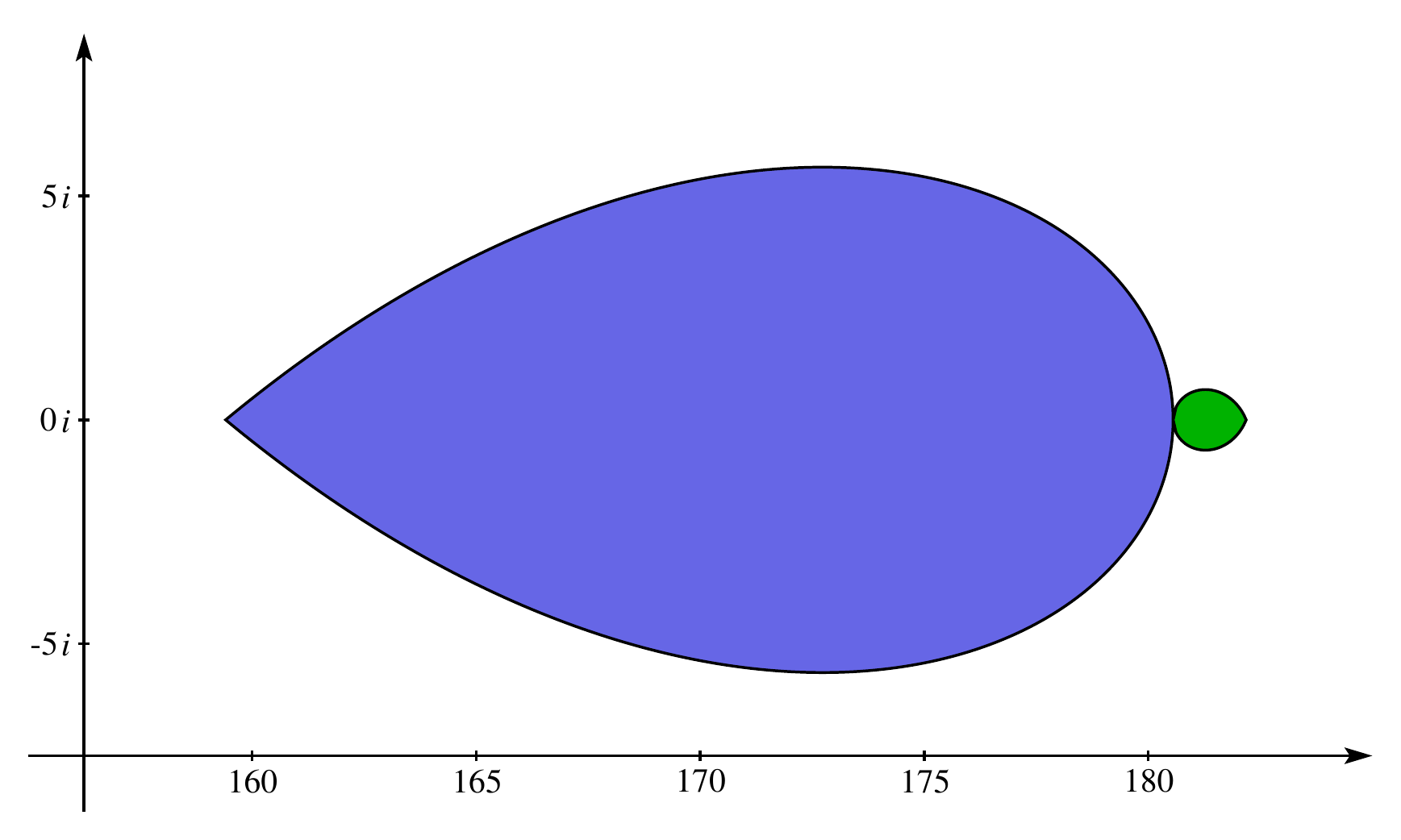}
  \caption{\textbf{Left:} An approximation to the real section of the immediate basin
           of attraction around a periodic point of order~$4$ for
           $W_{Z^3+Z+t}$ with $t = 177.68741192204597$.
           The coordinates shown are $(z_1, z_2)$;
           the corresponding point is $(z_1, z_2, -z_1-z_2)$.
           The coloring encodes the number of iteration steps necessary
           to get within distance~$10^{-4}$ of the periodic point.
           \textbf{Right:} Parameter values~$t \in \C$ for which there exists a stable
           $4$-cycle of transposition type (left region, blue)
           or a stable $4$-cycle without extra symmetry (right region, green).
           The components touch at a point where the multiplier matrix
           under~$\widetilde{W}_p^{\circ 2}$ has eigenvalue~$-1$.}
  \label{Fig:4cycle}
\end{figure}

\begin{figure}[htbp]
  \includegraphics[height=.4\textwidth]{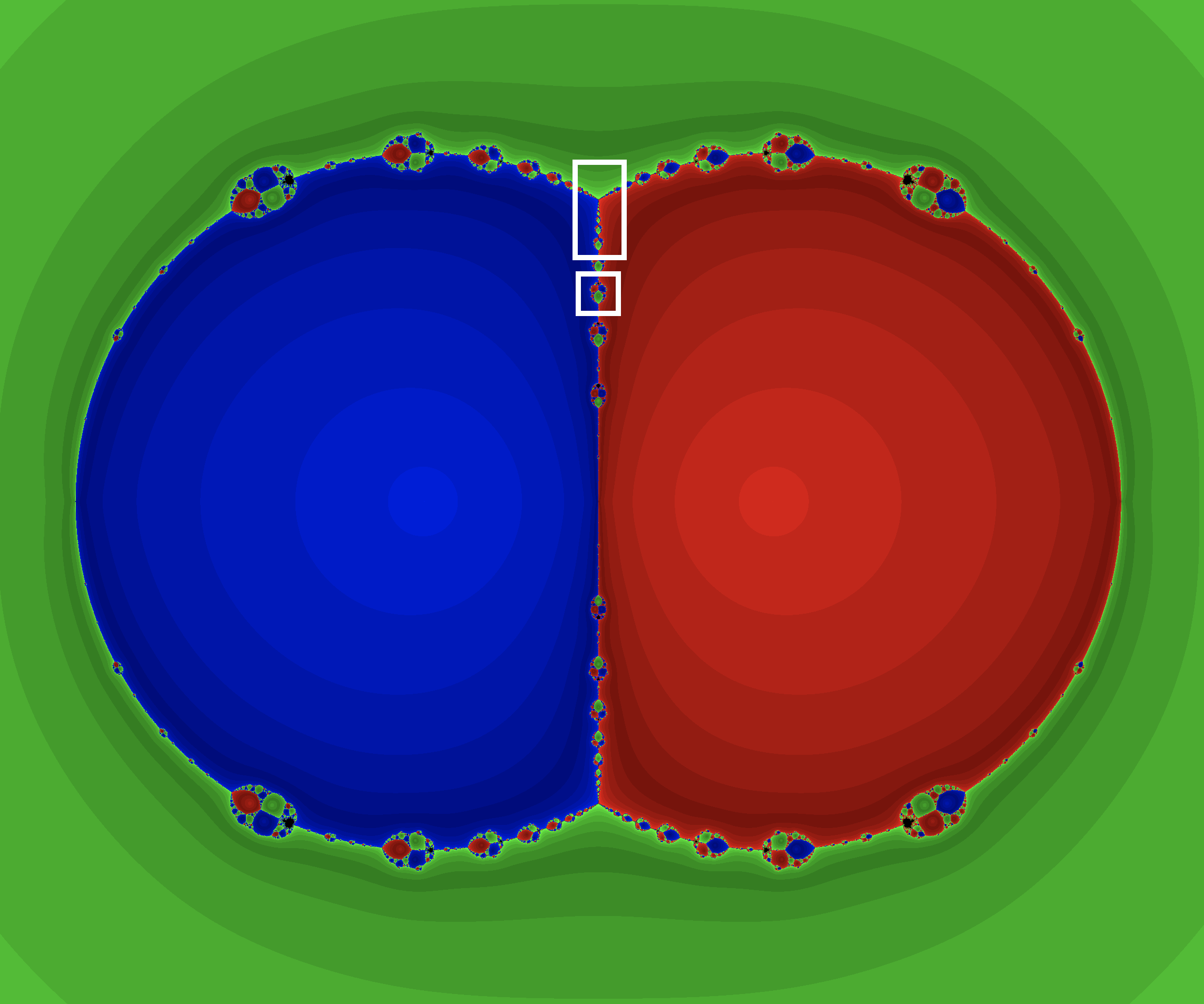}
  \qquad
  \includegraphics[height=.4\textwidth]{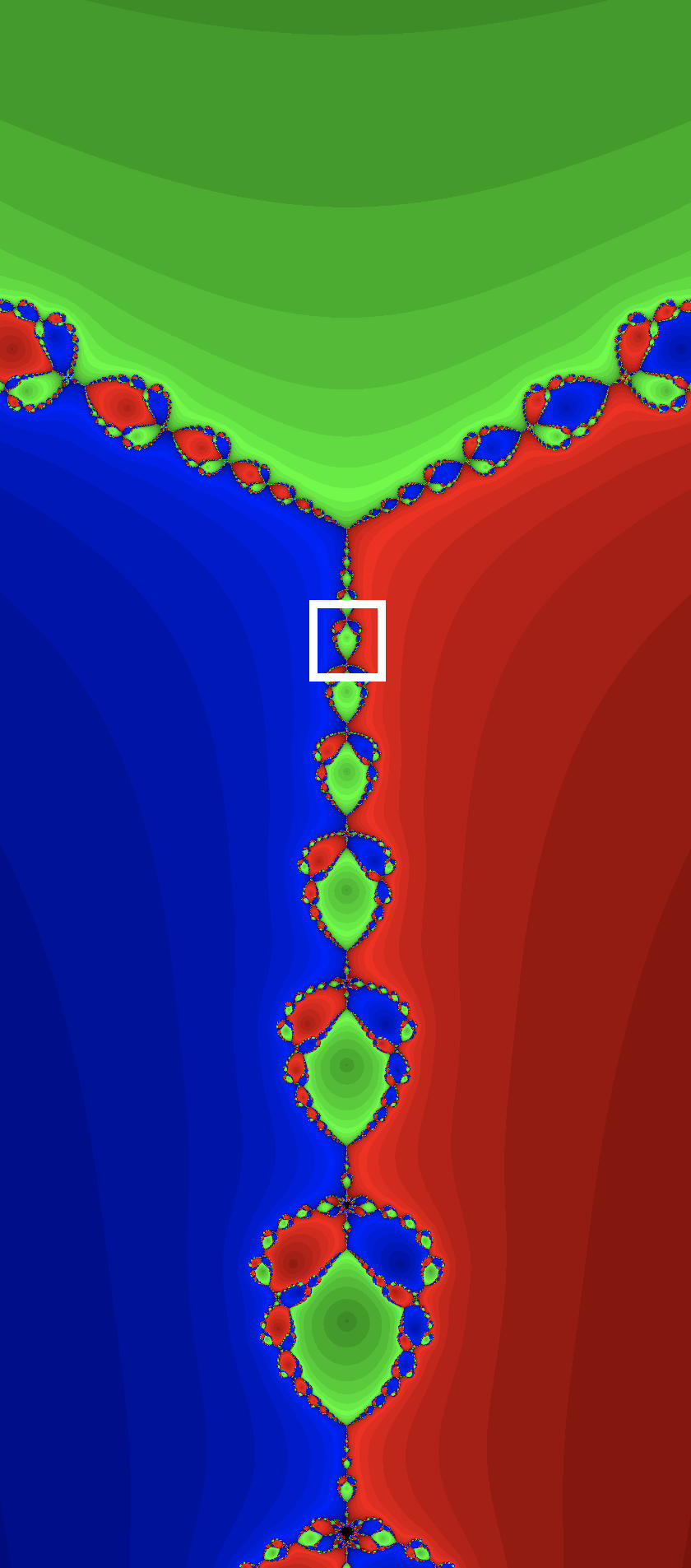}
  \qquad
  \includegraphics[height=.4\textwidth]{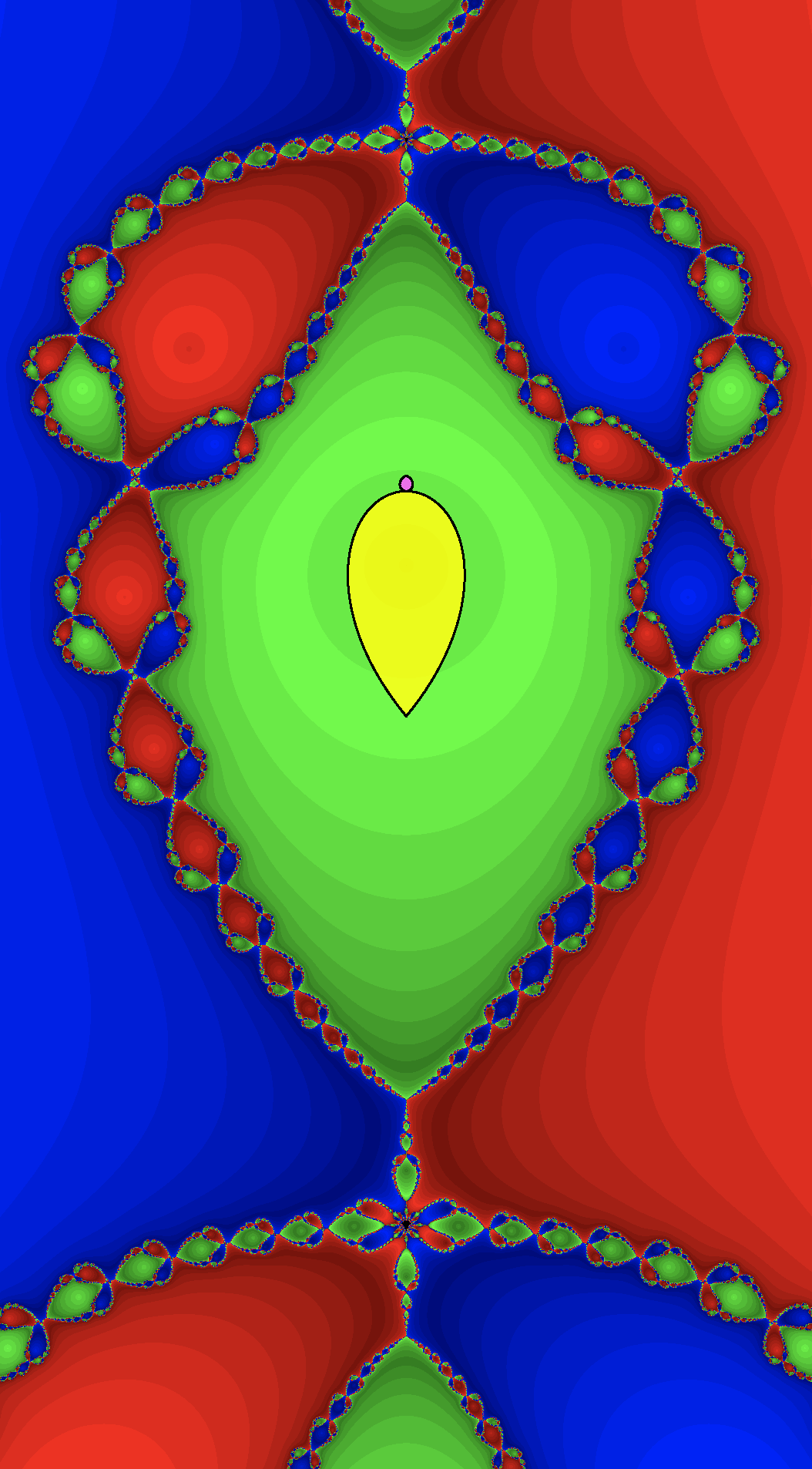}
  \caption{The parameter space of cubic polynomials up to affine precomposition,
           parameterized as $p(Z) = Z (Z-1) (Z-\lambda)$ with $\lambda \in \C$;
           shown is the complex $\lambda$-plane. This is the standard
           parameterization used in previous research on the Newton dynamics.
           The picture illustrates the Newton dynamics: the three colors indicate
           to which of the three roots $0$, $1$, and~$\lambda$ the free critical
           point converges. \newline
           The picture on the left shows a global view of parameter space,
           with two subsequent magnifications shown in the middle and on the right.
           The regions shown on the right in Figure~\ref{Fig:4cycle}, which
           indicate parameter values for which an attractive $4$-cycle exists
           for the Weierstrass iteration, are superimposed on the
           last magnification (shown in yellow and magenta and converted
           to the different parameterization used here).}
  \label{Fig:parameterspace}
  \vspace{15pt}
  \includegraphics[height=.4\textwidth]{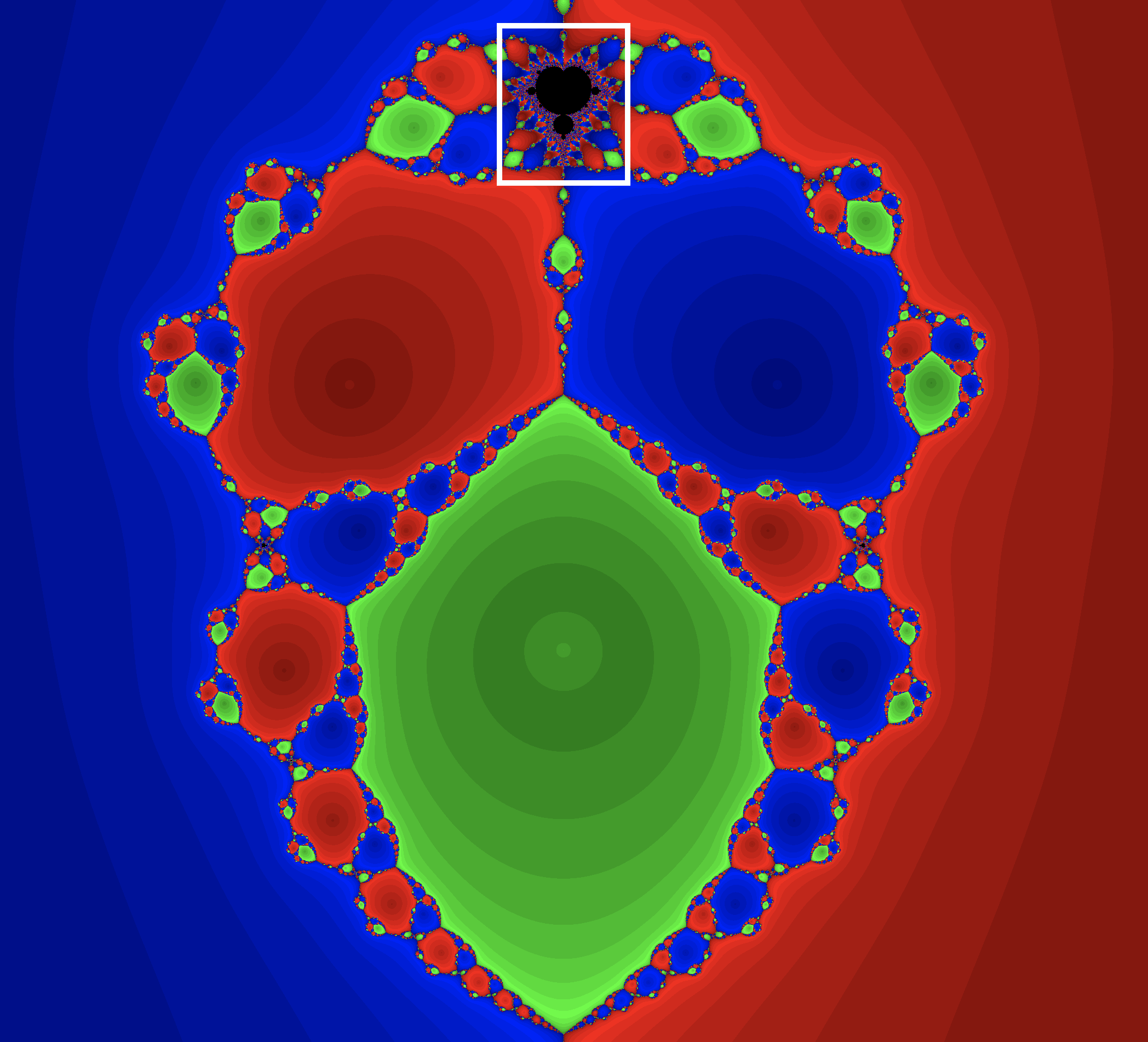}
  \qquad
  \includegraphics[height=.4\textwidth]{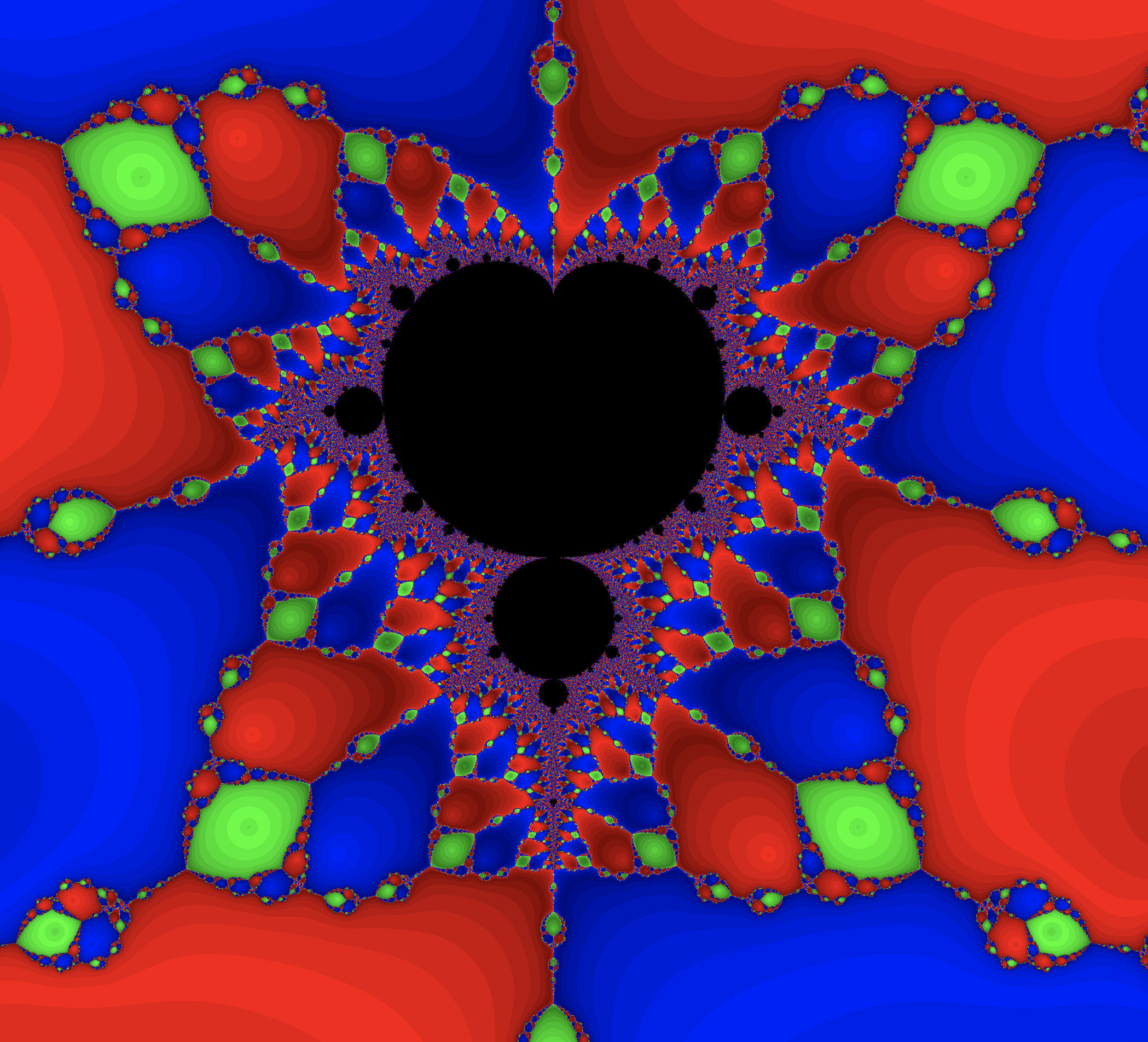}
  \caption{Sequence of refinements, starting with the first picture in
           Figure~\ref{Fig:parameterspace}, towards the largest
           of the ``little Mandelbrot sets'' around attracting cycles of period~$4$
           for the Newton iteration (there are apparently larger little Mandelbrot
           sets on the outer circles; these represent conformally conjugate
           dynamics and seem larger only because of different parametrizations).}
  \label{Fig:mandelbroetchen}
\end{figure}


\subsection{Points of order~$6$}

Finally, we consider $6$-cycles of rotation type.

\begin{Proposition} \label{Prop:6r}
  The $6$-cycles of rotation type form two irreducible smooth
  curves of geometric genus~$5$, each of degree~$24$ over the $t$-line,
  that are permuted by a transposition of the coordinates.
  The characteristic polynomial
  $X^2 + c_1 X + c_0$ of the multiplier matrix at any associated point
  (considered as a point of order~$2$ under~$\tilde{W}_p$)
  satisfies a relation that specifies a curve of geometric genus~$0$
  and degree~$5$. This curve
  can be parameterized as $(c_0(u)/c_2(u), c_1(u)/c_2(u))$, where
  \begin{align*}
    c_0(u) &= -36 u^5 - 12 u^4 + 60 u^3 + 236 u^2 + 260 u - 4\,, \\
    c_1(u) &= -4 u^5 - 51 u^4 - 90 u^3 + 59 u^2 + 42 u + 5\,, \\
    c_2(u) &= -9 u^4 - 18 u^3 + u^2 + 10 u - 1\,.
  \end{align*}
  In particular, no such $6$-cycle can be attracting.
\end{Proposition}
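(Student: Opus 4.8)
The plan is to follow the general procedure laid out before the statements of the propositions, adapting it to the specific symmetry of $6$-cycles of rotation type. First I would set up the defining scheme for such cycles: in the $(w_2, w_3)$-coordinates on the two-dimensional quotient, a $6$-cycle of rotation type projects to a $2$-cycle under $\widetilde{W}_p$ (since $W_p^{\circ 3}$ acts as a cyclic shift on the three vectors of the cycle, and the shift is invisible in the symmetric coordinates, leaving a period-$2$ action of $\widetilde{W}_p$ on the $w$-plane). Concretely, introduce coordinates $(t, w_2^{(0)}, w_3^{(0)}, w_2^{(1)}, w_3^{(1)})$ in five-dimensional affine space over $\Q$ and impose $\widetilde{W}_p(w_2^{(0)}, w_3^{(0)}) = (w_2^{(1)}, w_3^{(1)})$ and $\widetilde{W}_p(w_2^{(1)}, w_3^{(1)}) = (w_2^{(0)}, w_3^{(0)})$ using the explicit rational formula for $\widetilde{W}_p$ from the lemma above; then remove the component coming from genuine fixed points of $\widetilde{W}_p$ (i.e.\ $3$-cycles of rotation type, already treated in Proposition~\ref{Prop:3r}) and any component where some vector entry is a root of $p$. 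One should check, e.g.\ by a Groebner basis / primary decomposition computation in Magma or Singular, that what remains is a curve of degree~$24$ over the $t$-line splitting into two irreducible components interchanged by a transposition of coordinates, and compute its geometric genus (claimed to be~$5$) via the birational model on a plane curve, counting the singularities and applying the genus--degree formula with the usual $\delta$-invariant corrections.

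Next I would construct the multiplier map. At a point of the cycle, viewed as a fixed point of $\widetilde{W}_p^{\circ 2}$, form the $2\times 2$ Jacobi matrix $\TotalDiff{\widetilde{W}_p^{\circ 2}}{(w_2, w_3)}$ by the chain rule from the Jacobian of $\widetilde{W}_p$, and let $(c_0, c_1)$ be the coefficients of its characteristic polynomial $X^2 + c_1 X + c_0$. This gives a morphism from (a component of) the cycle curve to the $(c_0, c_1)$-plane, and I would compute the Zariski closure of its image. As in the general-$3$-cycle case, the rational functions involved are heavy, so I expect to fall back on the sampling-and-interpolation trick: specialize $t$ to many rational values, solve the resulting zero-dimensional systems, push the solutions through the multiplier map, fit the lowest-degree plane curve through the image points by linear algebra, and then verify correctness by checking that a generic point of the cycle curve maps onto the fitted curve. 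The claim is that this image curve has degree~$5$ and geometric genus~$0$; after confirming the genus (again by singularity analysis on the plane model) and locating a few smooth rational points, I would compute a rational parameterization $u \mapsto (c_0(u)/c_2(u), c_1(u)/c_2(u))$ over a large prime, lift it to $\Q$, and simplify it with \texttt{ImproveParametrization} to obtain the stated $c_0(u), c_1(u), c_2(u)$.

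Finally, to conclude that no such $6$-cycle is attracting, I would show the image curve in the $(c_0, c_1)$-plane is disjoint from the region where both roots of $X^2 + c_1 X + c_0$ lie in the closed unit disk. Substituting $(c_0, c_1) \leftarrow (\lambda\mu, -(\lambda+\mu))$ into the defining equation of the image curve yields a polynomial relation $P(\lambda, \mu) = 0$ between the two eigenvalues, and I would apply Lemma~\ref{L:test}: pick $\ell$ to be a suitable half-line from the origin (the positive real axis is the natural first guess) and a modulus $N$ large enough that the sampled values $P(e^{2\pi i j/N}, e^{2\pi i k/N})$ all stay at distance exceeding $\pi B/N$ from $\ell$, where $B$ is the coefficient-norm bound on the partial derivatives of $P$. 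If the positive real axis does not separate, a different ray (or a preliminary rotation/scaling of $P$) will; since the image curve here has low degree, $B$ is modest and a moderate $N$ should suffice. The main obstacle I anticipate is not any single conceptual point but the computational one: reliably extracting the degree-$5$ image curve despite the unwieldy rational expressions for $\widetilde{W}_p^{\circ 2}$ and its Jacobian, and then certifying via Lemma~\ref{L:test} that the unit bidisk is genuinely avoided — this requires choosing $\ell$ and $N$ carefully and doing the interval/absolute-value estimates cleanly. Everything else is a routine, if lengthy, Computer Algebra computation of the kind already carried out for Propositions~\ref{Prop:3c} and~\ref{Prop:4t}.
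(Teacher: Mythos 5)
Your overall pipeline (set up the cycle scheme, push it through the multiplier map by sampling and interpolation if necessary, parameterize the genus-$0$ image, and finish with Lemma~\ref{L:test}) is the right one, but the setup of the defining scheme is wrong, and this is not a cosmetic issue. You propose to define the $6$-cycles of rotation type as the $2$-periodic points of $\widetilde{W}_p$ in the $(w_2,w_3)$-plane, minus the fixed points of~$\widetilde{W}_p$ and the root-containing components. But a point of exact period~$2$ under~$\widetilde{W}_p$ lifts to a vector $\uz$ with $W_p^{\circ 2}(\uz)=\sigma(\uz)$ for \emph{some} $\sigma\in S_3$, and the three cases $\sigma=\mathrm{id}$, $\sigma$ a transposition, $\sigma$ a $3$-cycle give, respectively, ordinary $2$-cycles of~$W_p$, $4$-cycles of transposition type, and $6$-cycles of rotation type. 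Your removal steps eliminate none of the first two families, so your scheme is contaminated by the curves of Propositions~\ref{Prop:2c} and~\ref{Prop:4t}. This is fatal for the final step: the $4$-cycles of transposition type \emph{can} be attracting, so the image of your scheme under the multiplier map genuinely meets the unit bidisk and no choice of $\ell$ and~$N$ in Lemma~\ref{L:test} can certify avoidance. The correct setup works upstairs on~$H$: impose $W_p(\uz^{(0)})=\uz^{(1)}$ and $W_p(\uz^{(1)})=\rho(\uz^{(0)})$ with $\rho$ the explicit cyclic shift $(x,y,z)\mapsto(y,z,x)$, i.e.\ in coordinates $(t,x_0,y_0,x_1,y_1)$ require
\[ (x_0, y_0, -x_0-y_0) \longmapsto (x_1, y_1, -x_1-y_1) \longmapsto (y_0, -x_0-y_0, x_0)\,, \]
and then remove only the components coming from $3$-cycles of rotation type. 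Only this upstairs model can also support the first sentence of the proposition (two irreducible components interchanged by a transposition, degree~$24$ over the $t$-line, genus~$5$): a transposition acts trivially on the $(w_2,w_3)$-plane, so the two components have the same image downstairs and your quotient model cannot see them, nor their genus.

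A related, smaller slip: for a $6$-cycle of rotation type it is $W_p^{\circ 2}$ (not $W_p^{\circ 3}$) that acts as a cyclic shift on the six (not three) vectors of the cycle; this is what makes the image $2$-periodic under~$\widetilde{W}_p$, consistent with $n/3=2$. Your conclusion about the image period is right, but the reasoning as stated would give period~$3$ downstairs. Finally, for the record, the paper does not need the sampling trick here (the expressions are moderate enough to compute the image curve directly) and applies Lemma~\ref{L:test} with $\ell$ the \emph{negative} real axis and $N=12$; your positive real axis would have to be replaced after inspection, as you anticipate.
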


\begin{proof}
  We set up the variety describing $6$-cycles of rotation type
  as a subscheme of $5$-dimensional affine space with coordinates
  $t, x_0, y_0, x_1, y_1$, where $t$ is the parameter and the iteration
  satisfies
  \[ (x_0, y_0, -x_0-y_0) \longmapsto (x_1, y_1, -x_1-y_1) \longmapsto (y_0, -x_0-y_0, x_0) \,, \]
  and we remove components coming from
  $3$-cycles of rotation type. This results
  in a smooth irreducible curve of degree~$24$ over the $t$-line
  that has genus~$5$.
  We find the image curve in the $(c_0, c_1)$-plane. Since the
  degree and the coefficient size are moderate, we can directly check that
  the curve has geometric genus~$0$ and then find a parameterization.
  We then use the explicit equation and Lemma~\ref{L:test} with $\ell$
  the negative real axis and $N = 12$ to verify that no characteristic
  polynomial lying on the curve can have both roots in the unit disk.
\end{proof}


\subsection{Proof of Theorem~\ref{MT1}}

The results obtained in this section provide a proof of part~(\ref{Item:MT1-CubicOkay})
of Theorem~\ref{MT1}. Proposition~\ref{Prop:4t} gives a proof
of part~(\ref{Item:MT1-NotGenConv}) for the case $d = 3$. To obtain the conclusion for
all $d \ge 3$, we invoke Lemma~\ref{L:lift}.


\begin{bibdiv}
\begin{biblist}

\bib{BAS}{article}{
   author={Bilarev, Todor},
   author={Aspenberg, Magnus},
   author={Schleicher, Dierk},
   title={On the speed of convergence of Newton's method for complex polynomials},
   journal={Math. Comp.},
   volume={85},
   date={2016},
   number={298},
   pages={693--705},
   issn={0025-5718},
   doi={10.1090/mcom/2985},
}

\bib{BLS}{article}{
   author={Bollob\'{a}s, B\'{e}la},
   author={Lackmann, Malte},
   author={Schleicher, Dierk},
   title={A small probabilistic universal set of starting points for finding
          roots of complex polynomials by Newton's method},
   journal={Math. Comp.},
   volume={82},
   date={2013},
   number={281},
   pages={443--457},
   issn={0025-5718},
   doi={10.1090/S0025-5718-2012-02640-8},
}

\bib{Magma}{article}{
   author={Bosma, Wieb},
   author={Cannon, John},
   author={Playoust, Catherine},
   title={The Magma algebra system. I. The user language},
   note={Computational algebra and number theory (London, 1993)},
   journal={J. Symbolic Comput.},
   volume={24},
   date={1997},
   number={3-4},
   pages={235--265},
   issn={0747-7171},
   doi={10.1006/jsco.1996.0125},
}

\bib{HomotopyContinuation}{article}{
   author={Breiding, Paul},
   author={Timme, Sascha},
   title={HomotopyContinuation.jl: a package for homotopy continuation in Julia},
   conference={
      title={Mathematical software – ICMS 2018. 6th international conference},
      address={South Bend, IN, USA},
      year={July 24–27, 2018},
              },
   book={
      series={Lecture Notes in Computer Science},
      volume={10931},
      publisher={Springer},
      address={Cham},
        },
   pages={458--465},
   year={2018},
}

\bib{XavierChristianRootFinders}{article}{
   author={Buff, Xavier},
   author={Henriksen, Christian},
   title={On K\"{o}nig's root-finding algorithms},
   journal={Nonlinearity},
   volume={16},
   date={2003},
   number={3},
   pages={989--1015},
   issn={0951-7715},
   doi={10.1088/0951-7715/16/3/312},
}

\bib{Singular}{misc}{
   title={{\sc Singular} 4-1-2 --- A computer algebra system for polynomial computations},
   author={Decker, Wolfram},
   author={Greuel, Gert-Martin},
   author={Pfister, Gerhard},
   author={Sch\"onemann, Hans},
   year={2019},
   note={available at \url{http://www.singular.uni-kl.de}},
}

\bib{EremenkoEscaping}{article}{
   author={Er\"{e}menko, Alexandre \`E.},
   title={On the iteration of entire functions},
   conference={
      title={Dynamical systems and ergodic theory},
      address={Warsaw},
      date={1986},
   },
   book={
      series={Banach Center Publ.},
      volume={23},
      publisher={PWN, Warsaw},
   },
   date={1989},
   pages={339--345},
}

\bib{Hubbard2005}{article}{
   author={Hubbard, John},
   title={Parametrizing unstable and very unstable manifolds},
   language={English, with English and Russian summaries},
   journal={Mosc. Math. J.},
   volume={5},
   date={2005},
   number={1},
   pages={105--124},
   issn={1609-3321},
   doi={10.17323/1609-4514-2005-5-1-105-124},
}

\bib{HSS}{article}{
   author={Hubbard, John},
   author={Schleicher, Dierk},
   author={Sutherland, Scott},
   title={How to find all roots of complex polynomials by Newton's method},
   journal={Invent. Math.},
   volume={146},
   date={2001},
   number={1},
   pages={1--33},
   issn={0020-9910},
   doi={10.1007/s002220100149},
}

\bib{NewtonClassification}{misc}{
   title={A classification of postcritically finite Newton maps},
   author={Lodge, Russell},
   author={Mikulich, Yauhen},
   author={Schleicher, Dierk},
   year={2015-10-09},
   note={arXiv preprint, \url{https://arxiv.org/abs/1510.02771}},
}

\bib{McMullenRootFinding}{article}{
   author={McMullen, Curt},
   title={Families of rational maps and iterative root-finding algorithms},
   journal={Ann. of Math. (2)},
   volume={125},
   date={1987},
   number={3},
   pages={467--493},
   issn={0003-486X},
   doi={10.2307/1971408},
}

\bib{McNamee1}{article}{
   author={McNamee, John Michael},
   title={A 2002 update of the supplementary bibliography on roots of polynomials},
   journal={J. Comput. Appl. Math.},
   volume={142},
   date={2002},
   number={2},
   pages={433--434},
   issn={0377-0427},
   doi={10.1016/S0377-0427(01)00546-5},
}

\bib{McNamee2}{book}{
   author={McNamee, John Michael},
   title={Numerical methods for roots of polynomials. Part I},
   series={Studies in Computational Mathematics},
   volume={14},
   publisher={Elsevier B. V., Amsterdam},
   date={2007},
   pages={xx+333},
   isbn={978-0-444-52729-5},
   isbn={0-444-52729-X},
}

\bib{Palis-deMelo}{book}{
   author={Palis, Jacob, Jr.},
   author={de Melo, Welington},
   title={Geometric theory of dynamical systems},
   note={An introduction; Translated from the Portuguese by A. K. Manning},
   publisher={Springer-Verlag, New York-Berlin},
   date={1982},
   pages={xii+198},
   isbn={0-387-90668-1},
}

\bib{PanSolvingPolynomials}{article}{
   author={Pan, Victor Y.},
   title={Solving a polynomial equation: some history and recent progress},
   journal={SIAM Rev.},
   volume={39},
   date={1997},
   number={2},
   pages={187--220},
   issn={0036-1445},
   doi={10.1137/S0036144595288554},
}

\bib{NewtonRobin2}{misc}{
   title={Newton's method in practice II: The iterated refinement Newton method
          and near-optimal complexity for finding all roots of some polynomials
          of very large degrees},
   author={Randig, Marvin},
   author={Schleicher, Dierk},
   author={Stoll, Robin},
   year={2017-12-31},
   note={arXiv preprint, \url{https://arxiv.org/abs/1703.05847}},
}

\bib{RRRS}{article}{
   author={Rottenfusser, G\"{u}nter},
   author={R\"{u}ckert, Johannes},
   author={Rempe, Lasse},
   author={Schleicher, Dierk},
   title={Dynamic rays of bounded-type entire functions},
   journal={Ann. of Math. (2)},
   volume={173},
   date={2011},
   number={1},
   pages={77--125},
   issn={0003-486X},
   doi={10.4007/annals.2011.173.1.3},
}

\bib{NewtonEfficient}{misc}{
   title={On the Efficient Global Dynamics of Newton's Method for Complex Polynomials},
   author={Schleicher, Dierk},
   year={2016-10-08},
   note={arXiv preprint, \url{https://arxiv.org/abs/1108.5773}},
}

\bib{NewtonRobin1}{article}{
   author={Schleicher, Dierk},
   author={Stoll, Robin},
   title={Newton's method in practice: Finding all roots of polynomials of
   degree one million efficiently},
   journal={Theoret. Comput. Sci.},
   volume={681},
   date={2017},
   pages={146--166},
   issn={0304-3975},
   doi={10.1016/j.tcs.2017.03.025},
}

\bib{NewtonExperiments}{misc}{
   title={Finding polynomial roots by dynamical systems --- a case study},
   author={Shemyakov, Sergey},
   author={Chernov, Roman},
   author={Rumiantsau, Dzmitry},
   author={Schleicher, Dierk},
   author={Schmitt, Simon},
   author={Shemyakov, Anton},
   year={2020-04-07},
   note={arXiv preprint, \url{https://arxiv.org/abs/2004.03217}},
}

\bib{SmaleQuestion}{article}{
   author={Smale, Steve},
   title={On the efficiency of algorithms of analysis},
   journal={Bull. Amer. Math. Soc. (N.S.)},
   volume={13},
   date={1985},
   number={2},
   pages={87--121},
   issn={0273-0979},
   doi={10.1090/S0273-0979-1985-15391-1},
}

\bib{Verification}{misc}{
   author={Stoll, Michael},
   title={Magma code verifying the results in Section~\ref{Sec:Cubic}},
   note={available at \url{http://www.mathe2.uni-bayreuth.de/stoll/magma/index.html\#Weierstrass}},
}

\end{biblist}
\end{bibdiv}

\end{document}